\documentclass[11pt]{article}

\usepackage{geometry}
\usepackage{latexsym}
\usepackage{mathrsfs}
\usepackage{amsfonts}
\usepackage{amssymb}
\usepackage{subfigure}    
\usepackage{graphicx}
\usepackage{epstopdf}
\usepackage{float}
\usepackage{multirow}
\usepackage{booktabs}
\usepackage{threeparttable}
\usepackage{bm}
\usepackage{amsmath}
\usepackage{amsthm}
\usepackage{color}
\usepackage[subnum]{cases}
\usepackage{rotating}
\usepackage{enumitem}
\usepackage{accents}
\usepackage{algorithm}
\usepackage{algorithmic}
\usepackage[title]{appendix}
\usepackage{mathtools}
\usepackage{caption}
\usepackage{url}
\usepackage{comment}
\usepackage{natbib}
\makeatletter
\@addtoreset{equation}{section}
\makeatother

\makeatletter
\def \wideubar{\underaccent{{\cc@style\underline{\mskip15mu}}}}
\def \widebar{\accentset{{\cc@style\underline{\mskip10mu}}}}
\makeatother

\definecolor{lightgreen}{rgb}{0.1,0.5,0.1}
\definecolor{revisionblue}{rgb}{0,0,0}

\usepackage[colorlinks=true,
breaklinks=true,
bookmarks=true,
urlcolor=blue,
citecolor=lightgreen,
linkcolor=lightgreen,
bookmarksopen=false,
draft=false]{hyperref}

\graphicspath{{images/}}

\def\iHPGPPA{{iHP-GPPA }}
\def\Frho{\mathcal{F}_{\rho}}

\begin{document}

\newtheorem{property}{Property}[section]
\newtheorem{proposition}{Proposition}[section]
\newtheorem{append}{Appendix}[section]
\newtheorem{definition}{Definition}[section]
\newtheorem{lemma}{Lemma}[section]
\newtheorem{corollary}{Corollary}[section]
\newtheorem{theorem}{Theorem}[section]
\newtheorem{remark}{Remark}[section]
\newtheorem{problem}{Problem}[section]
\newtheorem{example}{Example}[section]
\newtheorem{assumption}{Assumption}
\renewcommand*{\theassumption}{\Alph{assumption}}

\renewcommand{\vartheta}{\widehat{\theta}} 
\def\myfbox#1{\fbox{\begin{minipage}{15cm}{#1}\end{minipage}}\\ }

\title{An Inexact Halpern-accelerated {Preconditioned}
\\
Generalized Proximal Point Algorithm\\
for the Maximal Monotone Inclusion Problem}

\author{
Lei Yang\thanks{School of Computer Science and Engineering, Sun Yat-sen University ({\tt yanglei39@mail.sysu.edu.cn}).},~~
Haihang Lan\thanks{School of Computer Science and Engineering, Sun Yat-sen University ({\tt lanhh3@mail2.sysu.edu.cn}).},~~
Di Hou\thanks{(Corresponding author) Department of Mathematics, National University of Singapore ({\tt dihou@u.nus.edu}).},~~
Ling Liang\thanks{Department of Mathematics, University of Tennessee, Knoxville ({\tt liang.ling@u.nus.edu}).},~~
Kim-Chuan Toh\thanks{Department of Mathematics, and Institute of Operations Research and Analytics, National University of Singapore ({\tt mattohkc@nus.edu.sg}).}
}
\date{September 20, 2026}

\maketitle

\begin{abstract}
This paper studies an inexact Halpern-accelerated generalized proximal point algorithm with an admissible positive semidefinite preconditioner $\mathcal{M}$ for solving maximal monotone inclusion problems. The proposed framework combines Halpern anchoring, resolvent inexactness, and relaxation over the full weight range $\rho\in(0,2]$, thereby covering both under-relaxed and over-relaxed proximal iterations within a unified scheme. We first establish convergence of the inexact resolvent sequence under conditions that allow general anchoring parameters and certain nonsummable tolerances. For anchoring parameters $\beta_{k}=1/(k+r)$ with $r\geq2$, we then derive explicit bounds on the squared fixed-point residual in the $\mathcal{M}$-seminorm. In particular, if the tolerances satisfy $\varepsilon_{k}=\mathcal{O}((k+1)^{-\alpha})$ with $\alpha>3/2$ for $0<\rho<2$ and $\alpha>2$ for $\rho=2$, these bounds yield an $\mathcal{O}(1/k^{2})$ convergence rate. The stronger decay condition on the inexactness tolerances at $\rho=2$ highlights a qualitative distinction between the endpoint and the interior regime in the inexact setting. Finally, we develop inexact accelerated versions of the preconditioned alternating direction method of multipliers (pADMM) and the preconditioned primal--dual hybrid gradient (PDHG) method based on this framework, derive inexactness criteria based on subproblem residuals, and establish a nonergodic $\mathcal{O}(1/k)$ KKT residual rate for their inexact iterates.

\vspace{5mm}
\noindent {\bf Keywords:}~~Maximal Monotone Inclusion Problem; Halpern Iteration; Inexact Computation; Preconditioned Accelerated Proximal Point Algorithms

\end{abstract}

\section{Introduction}

In this paper, we consider the following maximal monotone inclusion problem:
\begin{equation}\label{pro-inclu}
\text{Find}\quad \boldsymbol{x}^*\in \mathbb{R}^n
\quad \text{such that} \quad
0\in \mathcal{A}\boldsymbol{x}^*,
\end{equation}
where $\mathbb{R}^n$ is the $n$-dimensional Euclidean space and $\mathcal{A}:\mathbb{R}^n\rightrightarrows\mathbb{R}^n$ is a maximally monotone operator. This formulation underlies a broad class of convex optimization models and variational problems \cite{bc2017Convex, bauschke2025bredies}. The proximal point algorithm (PPA) \cite{martinet1970breve} is one of the most fundamental methods for solving \eqref{pro-inclu}, and many classical algorithms, such as the augmented Lagrangian method (ALM) \cite{rockafellar1978monotone}, the alternating direction method of multipliers (ADMM) \cite{eb1992douglas}, and the primal--dual hybrid gradient (PDHG) method \cite{zhu2008efficient,esser2010general,chambolle2011first}, can be interpreted as special realizations or approximations of PPA.

Given a proximal parameter $\lambda >0$, the exact PPA generates a sequence via
\begin{equation*}
\boldsymbol{x}^{k+1} = \mathcal{J}_{\lambda\mathcal{A}}\boldsymbol{x}^k, \quad k\geq0,
\end{equation*}
where $\mathcal{J}_{\lambda\mathcal{A}} := (\mathcal{I}+\lambda\mathcal{A})^{-1}$ denotes the resolvent of $\mathcal{A}$ and $\mathcal{I}$ is the identity map. Despite its generality, the classical PPA is hindered by two key limitations: (1) the inherent challenge of exact resolvent computation; and (2) a relatively slow worst-case convergence rate of $\mathcal{O}(1/k)$ for the \textit{squared} fixed-point residual, i.e., $\|\boldsymbol{x}^{k}-\mathcal{J}_{\lambda \mathcal{A}}\boldsymbol{x}^k\|^2$ \cite{baillion1996rate, gu2020tight}.

To improve the practical performance of PPA, various generalized (or relaxed) variants of PPA have been developed. A seminal contribution in this direction is due to Gol'shtein and Tret'yakov \cite{Golstein1979Modified}, who proposed the following inexact generalized proximal point algorithm (iGPPA), which combines relaxation with {inexact} resolvent evaluations:
\begin{equation}\label{algo-iGPPA}
\begin{aligned}
&\text{Find}~~{\boldsymbol{y}}^k
~~\text{such that}~~ \big\|{\boldsymbol{y}}^k-\mathcal{J}_{\lambda\mathcal{A}}\boldsymbol{x}^k\big\| \leq \varepsilon_k, \\
&\boldsymbol{x}^{k+1} = \rho_k{\boldsymbol{y}}^k+(1-\rho_k)\boldsymbol{x}^k, \quad k \geq 0,
\end{aligned}
\end{equation}
where $\varepsilon_k\ge 0$ controls the inexactness in evaluating the resolvent $\mathcal{J}_{\lambda\mathcal{A}}(\boldsymbol{x}^k)$ and $\rho_k\in(0,2)$ is a relaxation parameter. Subsequently, Eckstein and Bertsekas \cite{eb1992douglas} allowed for a variable proximal parameter $\lambda_k$ in \eqref{algo-iGPPA} and established the convergence of iGPPA. For iGPPA, allowing the over-relaxation $\rho_k>1$ is often empirically beneficial and has been widely adopted in practice. Nevertheless, an aggressive over-relaxation, particularly when $\rho_k$ closely approaches 2, may lead to numerical instability, which limits its practical effectiveness. For fixed $\lambda>0$ and $\rho_k=\rho\in(0,2)$, with $\varepsilon_k=\mathcal{O}((k+1)^{-\alpha})$ for $\alpha>1$, Corman and Yuan \cite{cy2014generalized} established an $\mathcal{O}(1/k)$ convergence rate for the squared fixed-point residual for iGPPA. Besides relaxation, acceleration techniques inspired by Nesterov and G\"uler \cite{nesterov1983method, guler1992new} have also been investigated for proximal methods \cite{alvarez2001inertial,attouch2019convergence, attouch2020convergence}.

A breakthrough was recently achieved by Kim \cite{kim2021accelerated}, who proposed an accelerated PPA inspired by Nesterov’s acceleration techniques. Using the performance estimation problem (PEP) framework, Kim established an accelerated convergence rate of $\mathcal{O}(1/k^2)$ for the squared fixed-point residual. Subsequently, Contreras and Cominetti \cite{cc2023optimal} provided a deeper structural interpretation by showing that Kim’s accelerated PPA can be viewed as a Halpern iteration \cite{halpern1967fixed} applied to a resolvent-based fixed-point mapping.

The Halpern iteration was originally designed for finding fixed points of nonexpansive mappings in Hilbert spaces, and offers strong convergence guarantees and an accelerated rate. In particular, Lieder \cite{l2021convergence-Halpern} derived an explicit and optimal $\mathcal{O}(1/k^2)$ convergence rate for the squared fixed-point residual via a direct analysis. Thus, its use as an acceleration mechanism for optimization algorithms has attracted renewed attention in recent years; see, for example, \cite{ly2024restarted,syzz2024acc-pADMM}.

For any fixed $\rho\in(0,2]$, define the relaxed resolvent mapping
\begin{equation*}
\mathcal{T}_{\rho}:=\rho\mathcal{J}_{\lambda\mathcal{A}}+(1-\rho)\mathcal{I}.
\end{equation*}
Since $\mathcal{T}_{\rho}$ is nonexpansive on the whole interval $\rho\in(0,2]$, it is natural to apply the Halpern iteration to this entire relaxed family rather than only to the proximal case $\rho=1$. This leads to the following Halpern-accelerated generalized PPA:
\begin{equation}\label{Hal-gene}
\boldsymbol{x}^{k+1} = \beta_k \boldsymbol{x}^0 + (1 - \beta_k)\big[\rho \mathcal{J}_{\lambda\mathcal{A}}\boldsymbol{x}^k+(1-\rho)\boldsymbol{x}^k\big], \quad k \geq 0,
\end{equation}
where $\boldsymbol{x}^0$ is a fixed anchor point. With the classical choice $\beta_k=\frac{1}{k+2}$, the Halpern mechanism improves the worst-case convergence rate of the squared fixed-point residual from $\mathcal{O}(1/k)$ to $\mathcal{O}(1/k^2)$. The endpoint $\rho=2$ corresponds to the reflected resolvent $2\mathcal{J}_{\lambda\mathcal{A}}-\mathcal{I}$, while the interior regime $\rho\in(0,2)$ continuously interpolates between under-relaxation and over-relaxation. From the exact Halpern viewpoint, the reflected endpoint is especially attractive: it allows one to apply tight worst-case residual estimates for nonexpansive mappings directly to the reflected resolvent formulation; see \cite{cc2023optimal,l2021convergence-Halpern}. This unified viewpoint is central to the present paper, because it allows us to analyze how relaxation interacts with inexactness and anchoring within a single framework.

Despite these appealing exact-case properties, the practical deployment of \eqref{Hal-gene} raises several nontrivial challenges.

\textbf{Inexact resolvent evaluation}. In many practical applications, computing the resolvent operator exactly is computationally expensive or even infeasible. This motivates the study of inexact variants of the Halpern-accelerated relaxed scheme \eqref{Hal-gene}. A basic inexact scheme can be given as follows:
\begin{equation*}
\begin{aligned}
&\text{Find}~~{\boldsymbol{y}}^k
~~\text{such that}~~ \big\|{\boldsymbol{y}}^k-\mathcal{J}_{\lambda\mathcal{A}}\boldsymbol{x}^k\big\| \leq \varepsilon_k, \\
&\boldsymbol{x}^{k+1} = \beta_k \boldsymbol{x}^0 + (1 - \beta_k)\big[\rho{\boldsymbol{y}}^k+(1-\rho)\boldsymbol{x}^k\big], \quad k \geq 0.
\end{aligned}
\end{equation*}
For $\rho=1$, the scheme above reduces to the inexact Halpern-accelerated PPA (iH-PPA), which has been studied in the literature. Early works \cite{kt2000approximating,mx2004convergence,x2002Iterative} established its convergence under summable errors and suitable parameter conditions; however, no explicit residual rate was derived. Recently, for the classical choice $\beta_{k}=1/(k+2)$, Diakonikolas \cite{d2020halpern} showed that resolvent tolerances $\varepsilon_{k}\leq\epsilon/[4(k+1)(k+2)]$ suffice for iH-PPA to attain a fixed-point residual of at most $\epsilon>0$ within $\mathcal{O}(1/\epsilon)$ iterations. Later, Liang et al. \cite{ltz2024inexact} studied inexact Halpern iterations for general cocoercive operators. In the proximal specialization corresponding to $\rho=1$, their results show that, under tolerances $\varepsilon_{k}=\mathcal{O}((k+1)^{-\alpha})$ with $\alpha>3/2$, iH-PPA achieves an $\mathcal{O}(1/k^{2})$ rate for the squared fixed-point residual. More recently, Zhang et al. \cite{zhang2025halpern} studied iH-PPA using a different analytical approach and established an $\mathcal{O}(1/k^{2})$ rate for the squared fixed-point residual under the more stringent decay condition $\varepsilon_{k}=\mathcal{O}((k+1)^{-\alpha})$ with $\alpha>3$.

Motivated by these developments, we investigate how the relaxation parameter $\rho\in(0,2]$ affects the error conditions sufficient to preserve the accelerated rate for the squared fixed-point residual. Our analysis distinguishes the interior regime $0<\rho<2$ from the endpoint $\rho=2$, for which we establish a sharp weighted summability condition.

\textbf{Anchoring parameter selection}. The choice of anchoring parameters $\beta_k$ plays a crucial role in the behavior of Halpern iterations. Although the classical choice $\beta_k=\frac{1}{k+2}$ is theoretically optimal in several exact settings, it may be suboptimal in practice. Therefore, allowing tunable anchoring parameters can potentially lead to improved numerical performance in Halpern-type methods across different values of $\rho$. Suh et al. \cite{suh2023continuous} derived accelerated convergence rates for exact Halpern-type proximal methods using anchoring parameters of the form $\beta_k=\frac{\gamma}{k+\gamma}$ with $\gamma\geq1$, as well as adaptive anchoring strategies, based on an analysis via ordinary differential equations. Along similar lines, He et al. \cite{hxlm2025convergence} proposed Halpern iterations with adaptive anchoring parameters and established improved convergence guarantees. In the inexact relaxed setting, {we establish accelerated residual bounds for $\beta_k=\frac{1}{k+r}$ with $r\geq2$ that explicitly account for the anchoring parameter, relaxation, and inexactness tolerances}.

\textbf{Preconditioned {PPA}}. The preconditioned PPA incorporates a preconditioner into the vanilla PPA to adjust the structure of the resolvent, thus constituting a generalized form of the latter, which has been shown in \cite{bcln2022degenerate, xue2026degenerate} to encompass a wide range of first-order operator splitting algorithms. These include the Douglas–Rachford splitting (DRS) \cite{eb1992douglas} and its preconditioned variants \cite{bredies2015preconditioned}, ADMM \cite{boyd2011distributed} and its preconditioned variants \cite{bredies2017proximal}, and the Chambolle–Pock algorithm \cite{chambolle2011first}. The flexible selection of preconditioners broadens the applicability of these algorithms. Recently, for positive semidefinite (degenerate) preconditioners, Zhang et al. \cite{zhang2025hppp} incorporated Halpern acceleration into the preconditioned PPA and analyzed its convergence and accelerated convergence rate. Sun et al. \cite{syzz2024acc-pADMM} leveraged the connection between the preconditioned PPA and the preconditioned alternating direction method of multipliers (pADMM) to establish an accelerated pADMM.

The developments above motivate the joint study of inexact computation, adjustable anchoring, and relaxation over $\rho\in(0,2]$ in a preconditioned setting. We integrate these aspects within a unified and practically implementable framework. Specifically, we study an inexact Halpern-accelerated preconditioned generalized proximal point algorithm (iHP-GPPA) that allows for inexact resolvent evaluation, a relaxation parameter $\rho\in(0,2]$, an adjustable anchoring parameter sequence, and an admissible positive semidefinite preconditioner $\mathcal{M}$.

The main contributions of this work can be summarized as follows:
\begin{itemize}
    \item We establish convergence of the inexact resolvent sequence $\{\boldsymbol{y}^{k}\}$ for all $\rho\in(0,2]$ under conditions allowing general anchoring parameters and certain nonsummable tolerances. Its limit is the unique solution nearest to the anchor $\boldsymbol{x}^{0}$ in the $\mathcal{M}$-seminorm. For $0<\rho<2$, we also establish the convergence of $\{\boldsymbol{x}^k\}$ to the same solution, even when $\mathcal{M}$ is singular.

    \item For anchoring parameters $\beta_{k}=1/(k+r)$ with $r\geq2$, we derive explicit bounds that quantify the effects of anchoring, relaxation, and inexactness on the squared fixed-point residual in the $\mathcal{M}$-seminorm. In particular, an $\mathcal{O}(1/k^{2})$ rate holds if $\sum_{k=0}^{\infty}(k+r-1)^{2}\varepsilon_{k}^{2}<\infty$ for $0<\rho<2$, or $\sum_{k=0}^{\infty}(k+r-1)\varepsilon_{k}<\infty$ for $\rho=2$. Moreover, at $\rho=2$, the weighted summability condition is also necessary for guaranteeing this rate uniformly over all admissible error directions.

    \item Under the corresponding weighted summability conditions and Lipschitz continuity of $(\mathcal{M}+\mathcal{A})^{-1}$, we establish an $\mathcal{O}(1/k^{2})$ rate for the squared Euclidean fixed-point residual at $\boldsymbol{y}^{k}$ for all $\rho\in(0,2]$, and at $\boldsymbol{x}^{k}$ for $0<\rho<2$. A counterexample shows that, with singular $\mathcal{M}$, convergence of $\{\boldsymbol{x}^{k}\}$ and its Euclidean residual rate can fail at $\rho=2$.

    \item We develop inexact Halpern-accelerated versions of pADMM and preconditioned PDHG with computable inexactness criteria based on subproblem residuals. Under these criteria and the corresponding weighted summability conditions, we establish a nonergodic $\mathcal{O}(1/k)$ KKT residual rate for their inexact iterates over the full relaxation range $\rho\in(0,2]$.

\end{itemize}

The remainder of this paper is organized as follows. Section \ref{sec-not-pre} provides notation and preliminaries. Section \ref{sec-iAccGPPA} presents the proposed \iHPGPPA framework. Section \ref{sec-convergence} presents its convergence and convergence rate analysis. Section \ref{sec-App} presents its applications to inexact accelerated pADMM and preconditioned PDHG. Section \ref{sec-conclusions} concludes the paper.

\section{Notation and preliminaries}\label{sec-not-pre}

We adopt the following notation throughout the paper for consistency and clarity.
\begin{itemize}[leftmargin=0pt, itemsep=0pt, topsep=5pt]

\item[] \textbf{Monotone operators and solution sets}: Let $\mathcal{A}: \mathbb{R}^n \rightrightarrows \mathbb{R}^n$ be a maximally monotone operator. The solution set of the inclusion problem \eqref{pro-inclu} is denoted by
\[
\mathcal{S} := \{\boldsymbol{x} \in \mathbb{R}^n \mid 0 \in \mathcal{A}\boldsymbol{x}\}.
\]

\item[] \textbf{Resolvent and auxiliary operators}: For $\lambda > 0$, the resolvent of $\mathcal{A}$ is defined as $\mathcal{J}_{\lambda\mathcal{A}} := (\mathcal{I} + \lambda\mathcal{A})^{-1}$, where $\mathcal{I}$ denotes the identity operator on $\mathbb{R}^n$. For $\rho \in [0, 2]$, we define the following auxiliary operator:
\[
 \mathcal{T}_{\rho} := \rho\mathcal{J}_{\lambda\mathcal{A}} + (1 - \rho)\mathcal{I}.
\]

\item[] \textbf{Metric projection}: For a nonempty closed convex subset $C \subset \mathbb{R}^n$, the metric projection $P_C: \mathbb{R}^n \to C$ is given by
\[
P_C(\boldsymbol{x}) = \arg\min\left\{\|\boldsymbol{x} - \boldsymbol{y}\| \mid \boldsymbol{y} \in C\right\}, \quad \forall \boldsymbol{x} \in \mathbb{R}^n.
\]

\item[] \textbf{Convention on empty sums}: For a sequence of real numbers $\{a_k\}_{k=1}^{\infty}$, we follow the convention that $\sum_{k=m}^{n} a_k = 0$ for any integers $m > n$.
\end{itemize}

We also collect the following technical lemmas that will be used in the subsequent convergence analysis.

\begin{lemma}\label{lemma-nonexpansive}\textup{(\cite[Corollary 23.11, Exercise 4.13]{bc2017Convex})}
If $\mathcal{A}$ is a maximally monotone operator and $\lambda > 0$, then the resolvent $\mathcal{J}_{\lambda\mathcal{A}}$ satisfies the following properties for all $\boldsymbol{x}, \boldsymbol{y} \in \mathbb{R}^n$:
\begin{enumerate}[label=\textit{(\roman*)}, leftmargin=2em]
    \item $\mathcal{J}_{\lambda\mathcal{A}}$ is firmly nonexpansive, i.e.,
        \[
        \big\|\mathcal{J}_{\lambda\mathcal{A}}\boldsymbol{x}-\mathcal{J}_{\lambda\mathcal{A}}\boldsymbol{y}\big\|^2+\big\|(\mathcal{I}-\mathcal{J}_{\lambda\mathcal{A}})\boldsymbol{x}-(\mathcal{I}-\mathcal{J}_{\lambda\mathcal{A}})\boldsymbol{y}\big\|^2\leq\|\boldsymbol{x}-\boldsymbol{y}\|^2.
        \]
    \item $\mathcal{R}_{\lambda\mathcal{A}} :=  \mathcal{I} - \mathcal{J}_{\lambda\mathcal{A}}$ is 1-co-coercive, i.e.,
        \[
        \big\|\mathcal{R}_{\lambda\mathcal{A}}\boldsymbol{x}-\mathcal{R}_{\lambda\mathcal{A}}\boldsymbol{y}\big\|^2
        \;\leq\;
        \langle\mathcal{R}_{\lambda\mathcal{A}}\boldsymbol{x}-\mathcal{R}_{\lambda\mathcal{A}}\boldsymbol{y},\, \boldsymbol{x}-\boldsymbol{y}\rangle.
        \]
    \item For any $\rho \in [0, 2]$, $\mathcal{T}_{\rho}$ is nonexpansive, i.e.,
        \[
        \big\|\mathcal{T}_{\rho}\boldsymbol{x} - \mathcal{T}_{\rho}\boldsymbol{y}\big\| \leq \|\boldsymbol{x} - \boldsymbol{y}\|.
        \]
\end{enumerate}
\end{lemma}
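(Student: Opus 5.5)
The plan is to derive all three items from the monotonicity of $\mathcal{A}$ via the resolvent identity, and then obtain (ii) and (iii) from (i) by algebraic rearrangement. Throughout, fix $\boldsymbol{x},\boldsymbol{y}\in\mathbb{R}^n$ and set $\boldsymbol{u}=\mathcal{J}_{\lambda\mathcal{A}}\boldsymbol{x}$, $\boldsymbol{v}=\mathcal{J}_{\lambda\mathcal{A}}\boldsymbol{y}$. Maximal monotonicity (Minty's theorem) guarantees that $\mathcal{J}_{\lambda\mathcal{A}}$ is single-valued with full domain, so $\boldsymbol{u},\boldsymbol{v}$ are well defined. By definition of the resolvent, $\lambda^{-1}(\boldsymbol{x}-\boldsymbol{u})\in\mathcal{A}\boldsymbol{u}$ and $\lambda^{-1}(\boldsymbol{y}-\boldsymbol{v})\in\mathcal{A}\boldsymbol{v}$.

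For (i), write $\boldsymbol{a}=\boldsymbol{u}-\boldsymbol{v}$ and $\boldsymbol{b}=(\boldsymbol{x}-\boldsymbol{u})-(\boldsymbol{y}-\boldsymbol{v})$, so that $\boldsymbol{x}-\boldsymbol{y}=\boldsymbol{a}+\boldsymbol{b}$. Monotonicity of $\mathcal{A}$, applied to the two inclusions above and multiplied by $\lambda>0$, gives $\langle \boldsymbol{a},\boldsymbol{b}\rangle\ge 0$. Expanding $\|\boldsymbol{x}-\boldsymbol{y}\|^2=\|\boldsymbol{a}\|^2+\|\boldsymbol{b}\|^2+2\langle\boldsymbol{a},\boldsymbol{b}\rangle$ then yields the firm nonexpansiveness inequality immediately.

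For (ii), note that $\mathcal{R}_{\lambda\mathcal{A}}\boldsymbol{x}-\mathcal{R}_{\lambda\mathcal{A}}\boldsymbol{y}=\boldsymbol{b}$. Hence $\langle \boldsymbol{b},\boldsymbol{x}-\boldsymbol{y}\rangle=\|\boldsymbol{b}\|^2+\langle\boldsymbol{a},\boldsymbol{b}\rangle\ge\|\boldsymbol{b}\|^2$, by the same monotonicity inequality $\langle\boldsymbol{a},\boldsymbol{b}\rangle\ge 0$.

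For (iii), the difference of the relaxed maps is $\mathcal{T}_\rho\boldsymbol{x}-\mathcal{T}_\rho\boldsymbol{y}=\rho\boldsymbol{a}+(1-\rho)(\boldsymbol{a}+\boldsymbol{b})=\boldsymbol{a}+(1-\rho)\boldsymbol{b}$. Squaring gives
\[
\|\mathcal{T}_\rho\boldsymbol{x}-\mathcal{T}_\rho\boldsymbol{y}\|^2=\|\boldsymbol{a}\|^2+(1-\rho)^2\|\boldsymbol{b}\|^2+2(1-\rho)\langle\boldsymbol{a},\boldsymbol{b}\rangle .
\]
Subtracting this from $\|\boldsymbol{a}+\boldsymbol{b}\|^2$ gives
\[
\|\boldsymbol{x}-\boldsymbol{y}\|^2-\|\mathcal{T}_\rho\boldsymbol{x}-\mathcal{T}_\rho\boldsymbol{y}\|^2=\rho(2-\rho)\|\boldsymbol{b}\|^2+2\rho\langle\boldsymbol{a},\boldsymbol{b}\rangle .
\]
Both terms on the right are nonnegative exactly when $\rho\in[0,2]$, because $\rho(2-\rho)\ge 0$ there and $\langle\boldsymbol{a},\boldsymbol{b}\rangle\ge0$. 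This proves nonexpansiveness, with the endpoint $\rho=2$ being the reflected resolvent.

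The only real obstacle is the bookkeeping in the last identity, specifically checking the signs so that the restriction $\rho\in[0,2]$ emerges naturally. Everything else follows from the single monotonicity inequality $\langle\boldsymbol{a},\boldsymbol{b}\rangle\ge0$, which is the standard content of the cited results in \cite{bc2017Convex}.
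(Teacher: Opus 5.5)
Your proof is correct and follows essentially the same route as the standard argument behind the results the paper cites from Bauschke--Combettes (the paper gives no proof beyond the citation). Like that argument, you reduce everything to the single monotonicity inequality $\langle \mathcal{J}_{\lambda\mathcal{A}}\boldsymbol{x}-\mathcal{J}_{\lambda\mathcal{A}}\boldsymbol{y},\mathcal{R}_{\lambda\mathcal{A}}\boldsymbol{x}-\mathcal{R}_{\lambda\mathcal{A}}\boldsymbol{y}\rangle\ge0$ and then expand squares; your identity $\|\boldsymbol{x}-\boldsymbol{y}\|^2-\|\mathcal{T}_\rho\boldsymbol{x}-\mathcal{T}_\rho\boldsymbol{y}\|^2=\rho(2-\rho)\|\boldsymbol{b}\|^2+2\rho\langle\boldsymbol{a},\boldsymbol{b}\rangle$ checks out, and only the ``if'' direction of your ``exactly when'' remark is needed.
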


\begin{lemma}\label{lemma-sk-converge}\textup{(\cite[Lemma 2.5]{x2002Iterative})}
Let $\{s_n\}$ be a sequence of nonnegative real numbers satisfying
\begin{equation*}
s_{n+1} \leqslant (1 - \alpha_n)s_n + \alpha_n\gamma_n + \tau_n, \quad n \geqslant 0,
\end{equation*}
where $\{\alpha_n\}$, $\{\gamma_n\}$ and $\{\tau_n\}$ satisfy the conditions:
(i) $\{\alpha_n\} \subset [0,1]$, $\sum_{n=1}^{\infty} \alpha_n = \infty$;
(ii) $\limsup\limits_{n \rightarrow \infty}\,\gamma_n \leq 0$;
(iii) $\tau_n \geq 0~(n \geqslant 0)$, $\sum_{n=1}^{\infty} \tau_n < \infty$.
Then, $\lim\limits_{n\to\infty} s_n = 0$.
\end{lemma}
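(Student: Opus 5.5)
This is the classical lemma of Xu [Lemma 2.5] on nonnegative sequences satisfying a perturbed contraction recursion. The plan is to first take care of the perturbation terms $\tau_n$ through a tail sum, and then use the divergence of $\sum\alpha_n$ to force the product $\prod(1-\alpha_j)$ to zero.

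\textbf{Step 1: unroll the recursion.} Fix $\varepsilon>0$. By (ii) we can pick $N$ such that $\gamma_n\le\varepsilon$ for all $n\ge N$. By (iii) we can also take $N$ large enough that $\sum_{n\ge N}\tau_n\le\varepsilon$. For $n\ge N$ the recursion then gives
\[
s_{n+1}\le(1-\alpha_n)s_n+\alpha_n\varepsilon+\tau_n .
\]
Iterating from $N$ up to $m>N$, and using $0\le 1-\alpha_j\le1$ to bound every product multiplying a $\tau_n$ by one, we obtain
\[
s_{m}\le \Big(\prod_{j=N}^{m-1}(1-\alpha_j)\Big)s_N+\varepsilon\Big(1-\prod_{j=N}^{m-1}(1-\alpha_j)\Big)+\sum_{n=N}^{m-1}\tau_n .
\]
The middle term comes from the telescoping identity $\sum_{n=N}^{m-1}\alpha_n\prod_{j=n+1}^{m-1}(1-\alpha_j)=1-\prod_{j=N}^{m-1}(1-\alpha_j)$, which is easily checked by induction on $m$.

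\textbf{Step 2: kill the product.} Since $1-t\le e^{-t}$, we have $\prod_{j=N}^{m-1}(1-\alpha_j)\le\exp\!\big(-\sum_{j=N}^{m-1}\alpha_j\big)$. By (i) the exponent tends to $-\infty$, so the product tends to $0$ as $m\to\infty$. Hence
\[
\limsup_{m\to\infty}s_m\le 0+\varepsilon+\varepsilon=2\varepsilon .
\]
As $\varepsilon$ is arbitrary and $s_m\ge0$, we conclude $\lim s_m=0$.

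\textbf{Main obstacle.} There is no real obstacle here. The only point needing care is the telescoping identity for the weighted sum of the $\alpha_n$. If one prefers, it can be avoided: a direct induction shows that $\max(s_m-\varepsilon,0)$ satisfies the same recursion with $\gamma$ replaced by $0$. Also, conditions (i) and (iii) are stated with sums starting at $n=1$, which is harmless since finitely many terms do not affect divergence or convergence.
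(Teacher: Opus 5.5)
Your argument is correct and complete. You fix $\varepsilon$, unroll the recursion from $N$ using the telescoping identity $\sum_{n=N}^{m-1}\alpha_n\prod_{j=n+1}^{m-1}(1-\alpha_j)=1-\prod_{j=N}^{m-1}(1-\alpha_j)$, and send the product to zero via $1-t\le e^{-t}$; this gives $\limsup s_m\le 2\varepsilon$. The paper gives no proof of this lemma and simply quotes Xu's Lemma 2.5, which is proved by essentially this same unrolling argument, so there is nothing to add.
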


\noindent{\textbf{Semi-inner product and seminorm:}} {Let} $\mathcal{M} : \mathbb{R}^n \rightarrow \mathbb{R}^n$ {be} a linear, bounded, self-adjoint, and positive semidefinite operator. Then we define a semi-inner product $\langle v, w \rangle_{\mathcal{M}} := \langle v, \mathcal{M}w \rangle$ for all $v$ and $w$ in $\mathbb{R}^n$, as well as a continuous seminorm $\|w\|_{\mathcal{M}} = \sqrt{\langle w, w \rangle_{\mathcal{M}}}$.

Throughout this paper, we use $\|\cdot\|_2$ to denote the spectral norm of a linear operator, i.e., the operator norm induced by the Euclidean norm.

\medskip
\noindent{\textbf{Preconditioned PPA:}} {The} PPA with the preconditioner $\mathcal{M}$ \cite{bcln2022degenerate} for solving \eqref{pro-inclu} can be expressed as follows:
\begin{equation*}
    \boldsymbol{x}^0 \in \mathbb{R}^n, \quad \boldsymbol{x}^{k+1} = \mathcal{J}_{\mathcal{A}}^{\mathcal{M}}\boldsymbol{x}^k := (\mathcal{M} + \mathcal{A})^{-1}\mathcal{M}\boldsymbol{x}^k.
\end{equation*}
\begin{definition}[Admissible preconditioner \cite{bcln2022degenerate}]
An admissible preconditioner for the operator $\mathcal{A} : \mathbb{R}^n \rightrightarrows \mathbb{R}^n$ is a linear, bounded, self-adjoint, and positive semidefinite operator $\mathcal{M} : \mathbb{R}^n \rightarrow \mathbb{R}^n$ such that
\begin{equation*}
\mathcal{J}_{\mathcal{A}}^{\mathcal{M}} := (\mathcal{M} + \mathcal{A})^{-1}\mathcal{M}
\end{equation*}
is single-valued and has full domain.
\end{definition}

\begin{proposition}[Finite-dimensional version of Proposition 2.3 in \cite{bcln2022degenerate}]\label{M-decomposition}
Let $\mathcal{M}:\mathbb{R}^{n}\to\mathbb{R}^{n}$ be a linear, self-adjoint, and positive semidefinite operator. Set $m:=\operatorname{rank}\mathcal{M}$. Then, there exists an injective linear operator $\mathcal{C}:\mathbb{R}^{m}\to\mathbb{R}^{n}$ such that $\mathcal{M}=\mathcal{C}\mathcal{C}^{*}$. Its adjoint $\mathcal{C}^{*}:\mathbb{R}^{n}\to\mathbb{R}^{m}$ is surjective, and $\operatorname{ran}\mathcal{C}=\operatorname{ran}\mathcal{M}$. Moreover, $\|\mathcal{C}\|_{2}=\|\mathcal{C}^{*}\|_{2}=\sqrt{\|\mathcal{M}\|_{2}}$.
\end{proposition}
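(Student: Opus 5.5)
The proof goes through the spectral decomposition of $\mathcal{M}$. Since $\mathcal{M}$ is self-adjoint and positive semidefinite on $\mathbb{R}^n$, there is an orthogonal operator $\mathcal{Q}$ and a diagonal $\Lambda=\mathrm{diag}(\lambda_1,\dots,\lambda_n)$ with $\lambda_1\ge\cdots\ge\lambda_m>0=\lambda_{m+1}=\cdots=\lambda_n$ such that $\mathcal{M}=\mathcal{Q}\Lambda\mathcal{Q}^*$, where $m=\operatorname{rank}\mathcal{M}$. Let $q_1,\dots,q_m$ be the orthonormal eigenvectors for the positive eigenvalues. Define $\mathcal{C}:\mathbb{R}^m\to\mathbb{R}^n$ by $\mathcal{C}z:=\sum_{i=1}^m \sqrt{\lambda_i}\, z_i\, q_i$. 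In matrix form, $\mathcal{C}=Q_m\Lambda_m^{1/2}$, where $Q_m=[q_1,\dots,q_m]$ and $\Lambda_m=\mathrm{diag}(\lambda_1,\dots,\lambda_m)$.

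With this choice the claims follow in order.

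\emph{Factorization.} We have
\[
\mathcal{C}\mathcal{C}^*=Q_m\Lambda_mQ_m^*=\sum_{i=1}^m\lambda_i q_iq_i^*=\mathcal{M},
\]
since the remaining eigenvalues vanish.

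\emph{Injectivity.} If $\mathcal{C}z=0$, then by orthonormality of the $q_i$ we get $\sqrt{\lambda_i}z_i=0$ for every $i\le m$. Since each $\lambda_i>0$, this gives $z=0$.

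\emph{Surjectivity of $\mathcal{C}^*$.} In finite dimensions, $\operatorname{ran}\mathcal{C}^*=(\ker\mathcal{C})^\perp=\mathbb{R}^m$. Alternatively, one can check directly that $\mathcal{C}^*(q_j/\sqrt{\lambda_j})=e_j$ for each $j\le m$.

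\emph{Range identity.} We have $\operatorname{ran}\mathcal{C}=\operatorname{span}\{q_1,\dots,q_m\}$. This subspace is also $\operatorname{ran}\mathcal{M}$, because $\mathcal{M}x=\sum_{i\le m}\lambda_i\langle q_i,x\rangle q_i$ and $\mathcal{M}q_j=\lambda_jq_j$ with $\lambda_j\neq0$. Alternatively, $\operatorname{ran}\mathcal{M}=\operatorname{ran}\mathcal{C}\mathcal{C}^*=\mathcal{C}(\operatorname{ran}\mathcal{C}^*)=\operatorname{ran}\mathcal{C}$, using surjectivity of $\mathcal{C}^*$.

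\emph{Norms.} The identity $\|\mathcal{C}^*\|_2=\|\mathcal{C}\|_2$ is standard. Moreover,
\[
\|\mathcal{C}\|_2^2=\sup_{\|z\|=1}\langle \mathcal{C}^*\mathcal{C}z,z\rangle=\|\mathcal{C}^*\mathcal{C}\|_2 .
\]
Here $\mathcal{C}^*\mathcal{C}=\Lambda_m^{1/2}Q_m^*Q_m\Lambda_m^{1/2}=\Lambda_m$, whose norm is $\lambda_1=\|\mathcal{M}\|_2$. Equivalently, $\|\mathcal{C}\|_2^2=\|\mathcal{C}\mathcal{C}^*\|_2=\|\mathcal{M}\|_2$. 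Hence $\|\mathcal{C}\|_2=\|\mathcal{C}^*\|_2=\sqrt{\|\mathcal{M}\|_2}$.

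There is no serious obstacle. The only points needing care are the degenerate case $m=0$ and the range identity. If $m=0$, then $\mathcal{M}=0$; we take $\mathcal{C}$ to be the zero map from the trivial space $\mathbb{R}^0$, and all statements hold trivially. The range identity is handled as above via surjectivity of $\mathcal{C}^*$.
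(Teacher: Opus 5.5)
Your proof is correct and complete, including the degenerate case $m=0$. The paper gives no argument of its own and defers to Bredies et al., who take $\mathcal{C}$ to be the square root $\mathcal{M}^{1/2}$ restricted to the range of $\mathcal{M}$. Your $\mathcal{C}=Q_m\Lambda_m^{1/2}=\mathcal{M}^{1/2}Q_m$ is exactly that operator once $\operatorname{ran}\mathcal{M}$ is identified with $\mathbb{R}^m$ via $q_1,\dots,q_m$, so the approaches coincide; in finite dimensions their closed-range step for surjectivity of $\mathcal{C}^*$ reduces to your $\operatorname{ran}\mathcal{C}^*=(\ker\mathcal{C})^\perp=\mathbb{R}^m$.
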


\begin{proposition}[Finite-dimensional version of Proposition 2.5 in \cite{syzz2024acc-pADMM}]\label{parallel-comp}
Let $\mathcal{A}: \mathbb{R}^n \rightrightarrows \mathbb{R}^n$ be a maximally monotone operator and let $\mathcal{M}$ be an admissible preconditioner for $\mathcal{A}$. Suppose that $\mathcal{M} = \mathcal{C}\mathcal{C}^*$ is a decomposition of $\mathcal{M}$ according to Proposition \ref{M-decomposition} with $\mathcal{C}: \mathbb{R}^{m} \rightarrow \mathbb{R}^n$. The parallel composition $\mathcal{C}^* \triangleright \mathcal{A} := (\mathcal{C}^* \mathcal{A}^{-1} \mathcal{C})^{-1}$ is a maximally monotone operator. Furthermore, the resolvent of $\mathcal{C}^* \triangleright \mathcal{A}$, denoted by $\mathcal{J}_{\mathcal{C}^* \triangleright \mathcal{A}} := (\mathcal{I} + \mathcal{C}^* \triangleright \mathcal{A})^{-1}$, satisfies the following identity
\begin{equation*}
\mathcal{J}_{\mathcal{C}^* \triangleright \mathcal{A}} = \mathcal{C}^* (\mathcal{M} + \mathcal{A})^{-1} \mathcal{C}.
\end{equation*}
In particular, $\mathcal{J}_{\mathcal{C}^* \triangleright \mathcal{A}}: \mathbb{R}^{m} \rightarrow \mathbb{R}^{m}$ is everywhere well-defined and firmly nonexpansive. Moreover, for any $\rho \in (0, 2]$, define
\begin{equation}\label{eq-Frho}
\Frho := (1 - \rho)\mathcal{I} + \rho\mathcal{J}_{\mathcal{C}^* \triangleright \mathcal{A}}.
\end{equation}
Then, $\Frho$ is nonexpansive and
\begin{equation*}
\mathcal{C}^* {\operatorname{zero}}(\mathcal{A})
= \mathcal{C}^* {\operatorname{Fix}}(\mathcal{J}_{\mathcal{A}}^{\mathcal{M}})
= {\operatorname{Fix}}(\mathcal{J}_{\mathcal{C}^* \triangleright \mathcal{A}})
= {\operatorname{Fix}}( \Frho),
\end{equation*}
where $\operatorname{zero}(\cdot)$ and $\operatorname{Fix}(\cdot)$ denote the zero set and fixed-point set of an operator, respectively. In particular, $\operatorname{zero}(\mathcal{A})=\mathcal{S}$.
\end{proposition}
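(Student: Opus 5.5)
The statement to prove is Proposition~\ref{parallel-comp}, the finite-dimensional specialization of the cited result. I will show four things in turn. First, that $\mathcal{C}^*\triangleright\mathcal{A}$ is maximally monotone. Second, the resolvent identity. Third, the nonexpansiveness of $\Frho$. Fourth, the chain of fixed-point identities.

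\textbf{Resolvent identity.} I start with the identity, because maximal monotonicity will then follow from Minty's theorem.

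Fix $u\in\mathbb{R}^m$. Admissibility of $\mathcal{M}$ implies that $(\mathcal{M}+\mathcal{A})^{-1}\mathcal{M}$ is single-valued with full domain. Since $\mathcal{C}u\in\operatorname{ran}\mathcal{C}=\operatorname{ran}\mathcal{M}$ by Proposition~\ref{M-decomposition}, we may write $\mathcal{C}u=\mathcal{M}w$. Hence $x:=(\mathcal{M}+\mathcal{A})^{-1}\mathcal{C}u$ exists. It is also unique: if $x,x'$ both satisfy the inclusion, monotonicity of $\mathcal{A}$ gives $\|x-x'\|_{\mathcal{M}}^2\le 0$. Then $\mathcal{C}^*x=\mathcal{C}^*x'$, because $\mathcal{M}(x-x')=0$ implies $\mathcal{C}\mathcal{C}^*(x-x')=0$, and $\mathcal{C}$ is injective. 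So $\mathcal{C}^*(\mathcal{M}+\mathcal{A})^{-1}\mathcal{C}$ is single-valued on all of $\mathbb{R}^m$.

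Now set $v:=\mathcal{C}^*x$. Then $\mathcal{C}u-\mathcal{C}v=\mathcal{C}u-\mathcal{M}x\in\mathcal{A}x$, so
\[
x\in\mathcal{A}^{-1}\mathcal{C}(u-v),\qquad v\in\mathcal{C}^*\mathcal{A}^{-1}\mathcal{C}(u-v),
\]
that is, $u-v\in(\mathcal{C}^*\triangleright\mathcal{A})v$. Reversing the steps shows that every $v$ with $u\in v+(\mathcal{C}^*\triangleright\mathcal{A})v$ arises in this way. This gives $\mathcal{J}_{\mathcal{C}^*\triangleright\mathcal{A}}=\mathcal{C}^*(\mathcal{M}+\mathcal{A})^{-1}\mathcal{C}$, everywhere defined.

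\textbf{Monotonicity, maximality and firm nonexpansiveness.} Take $(v_i,u_i)\in\operatorname{gra}(\mathcal{C}^*\triangleright\mathcal{A})$. By definition $v_i=\mathcal{C}^*x_i$ with $\mathcal{C}u_i\in\mathcal{A}x_i$. Then
\[
\langle u_1-u_2,v_1-v_2\rangle=\langle \mathcal{C}u_1-\mathcal{C}u_2,x_1-x_2\rangle\ge0,
\]
so the operator is monotone. Since $\mathcal{I}+\mathcal{C}^*\triangleright\mathcal{A}$ is surjective, which is exactly the full-domain property just shown, Minty's theorem yields maximal monotonicity. Lemma~\ref{lemma-nonexpansive} applied on $\mathbb{R}^m$ (with $\lambda=1$) then gives that the resolvent is firmly nonexpansive and that $\Frho$ is nonexpansive for $\rho\in(0,2]$.

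\textbf{Fixed points.} For $\rho>0$ we have $\operatorname{Fix}(\Frho)=\operatorname{Fix}(\mathcal{J}_{\mathcal{C}^*\triangleright\mathcal{A}})$ immediately. The equality $\operatorname{zero}(\mathcal{A})=\operatorname{Fix}(\mathcal{J}_{\mathcal{A}}^{\mathcal{M}})$ holds because $x=(\mathcal{M}+\mathcal{A})^{-1}\mathcal{M}x$ if and only if $\mathcal{M}x\in\mathcal{M}x+\mathcal{A}x$, that is, $0\in\mathcal{A}x$, using single-valuedness. This also gives $\operatorname{zero}(\mathcal{A})=\mathcal{S}$ by definition.

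For the remaining equality, note that $v\in\operatorname{Fix}(\mathcal{J}_{\mathcal{C}^*\triangleright\mathcal{A}})$ if and only if $0\in(\mathcal{C}^*\triangleright\mathcal{A})v$, that is, $v\in\mathcal{C}^*\mathcal{A}^{-1}(0)=\mathcal{C}^*\operatorname{zero}(\mathcal{A})$.

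The main obstacle is the well-definedness step in the resolvent identity. It requires the facts $\operatorname{ran}\mathcal{C}=\operatorname{ran}\mathcal{M}$ and injectivity of $\mathcal{C}$ from Proposition~\ref{M-decomposition}, which together show that $\mathcal{C}^*x$ does not depend on the choice of $x$ even when $(\mathcal{M}+\mathcal{A})^{-1}$ is multivalued off $\operatorname{ran}\mathcal{M}$.
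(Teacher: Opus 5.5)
Your argument is correct and complete. The paper does not prove this proposition; it imports it from Proposition~2.5 of \cite{syzz2024acc-pADMM}. What you have written is therefore a self-contained finite-dimensional verification, not a reconstruction of an argument in the text.

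You proceed in this order:
\begin{itemize}
\item the resolvent identity, by direct graph manipulation;
\item monotonicity of $\mathcal{C}^*\triangleright\mathcal{A}$, by pulling pairs back through $\mathcal{C}$;
\item maximality, from Minty's theorem;
\item firm nonexpansiveness of the resolvent and nonexpansiveness of $\mathcal{F}_{\rho}$, read off from Lemma~\ref{lemma-nonexpansive}.
\end{itemize}
The other natural route runs the other way. It shows that $\mathcal{J}_{\mathcal{A}}^{\mathcal{M}}$ is firmly nonexpansive in the $\mathcal{M}$-seminorm, transfers this to $\mathcal{C}^*(\mathcal{M}+\mathcal{A})^{-1}\mathcal{C}$ via $\mathcal{C}^*(\mathcal{M}+\mathcal{A})^{-1}\mathcal{C}\,\mathcal{C}^*=\mathcal{C}^*\mathcal{J}_{\mathcal{A}}^{\mathcal{M}}$, and then identifies the maximal monotone operator whose resolvent this is.

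Your ordering makes the hypotheses transparent. It uses only monotonicity of $\mathcal{A}$ together with admissibility; maximality of $\mathcal{A}$ is never invoked. Finite dimensionality enters at exactly one place, $\operatorname{ran}\mathcal{C}=\operatorname{ran}\mathcal{M}$, and that is what makes $\mathcal{C}^*(\mathcal{M}+\mathcal{A})^{-1}\mathcal{C}$ everywhere defined with no extra assumption. In infinite dimensions $\operatorname{ran}\mathcal{C}$ can strictly contain $\operatorname{ran}\mathcal{M}$ when $\mathcal{M}$ lacks closed range, and this step would need more care. The citation, in turn, buys brevity and the general Hilbert-space setting.

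One correction to your own framing: the step you single out as the main obstacle is immediate. Once $\mathcal{C}u=\mathcal{M}w$, admissibility already makes $(\mathcal{M}+\mathcal{A})^{-1}\mathcal{C}u=(\mathcal{M}+\mathcal{A})^{-1}\mathcal{M}w$ a singleton, so $x$ itself is unique.
\begin{itemize}
\item Your monotonicity argument proves only the weaker fact $\mathcal{C}^*x=\mathcal{C}^*x'$, so ``It is also unique'' does not match what you then show.
\item The concern about $(\mathcal{M}+\mathcal{A})^{-1}$ being multivalued off $\operatorname{ran}\mathcal{M}$ never arises, since you never evaluate it there.
\end{itemize}
The redundancy is harmless and does not affect correctness.
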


\section{An inexact Halpern-accelerated preconditioned generalized PPA}\label{sec-iAccGPPA}

In this section, we present the \iHPGPPA for solving the inclusion problem \eqref{pro-inclu}. Its convergence properties are established in Section \ref{sec-convergence}. The complete algorithmic framework is presented in Algorithm \ref{algo-iAccGPPA}.

\begin{algorithm}[H]
\caption{\iHPGPPA for the inclusion problem \eqref{pro-inclu}}\label{algo-iAccGPPA}
\textbf{Input:} Choose $\rho\in(0,2]$, a sequence $\{\beta_k\}_{k=0}^{\infty}\subseteq(0,1)$, {an admissible preconditioner $\mathcal{M}$}, and an inexactness tolerance sequence $\{\varepsilon_k\}_{k=0}^{\infty}\subseteq{[0,\infty)}$. Select an initial point $\boldsymbol{x}^0\in\mathbb{R}^n$ and set $k=0$.

\textbf{while} a termination criterion is not met, \textbf{do}
\begin{subequations}\label{Hal-PPP-inex}
\begin{align}
\text{Find} &~~\boldsymbol{y}^k ~~\text{such that}~ \big\|\boldsymbol{y}^k-\mathcal{J}^{\mathcal{M}}_{\mathcal{A}}\boldsymbol{x}^k\big\| \leq \varepsilon_k, \label{inexact-criterion}\\
\boldsymbol{x}^{k+1} &= \beta_k \boldsymbol{x}^0 + (1 - \beta_k)\big[\rho \boldsymbol{y}^k+(1-\rho)\boldsymbol{x}^k\big], \quad k \geq 0. \label{step-Hal-relax}
\end{align}
\end{subequations}
Set $k=k+1$.
\vspace{1mm}

\textbf{end while}
\end{algorithm}

The \iHPGPPA framework allows inexact solutions to the resolvent subproblem with a prescribed inexactness tolerance $\varepsilon_k$, where ${\boldsymbol{y}}^k$ serves as an inexact evaluation of $\mathcal{J}_{\mathcal{A}}^{{\mathcal{M}}}\boldsymbol{x}^k$ satisfying \eqref{inexact-criterion}. The update step \eqref{step-Hal-relax} integrates two key techniques: relaxation and Halpern acceleration. Here $\beta_k$ controls the pull toward the anchor $\boldsymbol{x}^0$, while $\rho$ determines which relaxed resolvent mapping is being accelerated. Consequently, the parameter $\rho$ affects not only the practical behavior of the method but also the admissible level of inexactness in the convergence-rate analysis. In particular, the endpoint $\rho=2$ is qualitatively different from the interior regime $\rho\in(0,2)$, and this distinction will be reflected in the results below.

Let $\mathcal{M} = \mathcal{C}\mathcal{C}^*$ be a decomposition of $\mathcal{M}$ according to Proposition \ref{M-decomposition}. To prove the convergence of the scheme \eqref{Hal-PPP-inex}, we first introduce the following auxiliary sequences
\begin{equation}\label{auxi-seq}
\boldsymbol{u}^k = \mathcal{C}^*\boldsymbol{x}^k,  \quad \widetilde{\boldsymbol{u}}^k = \mathcal{C}^*\boldsymbol{y}^k, \quad \forall k\geq0.
\end{equation}
Then we have
\begin{equation*}
\mathcal{C}^*\mathcal{J}^{\mathcal{M}}_{\mathcal{A}}\boldsymbol{x}^k = \mathcal{C}^* (\mathcal{M} + \mathcal{A})^{-1} \mathcal{C}\boldsymbol{u}^k = \mathcal{J}_{\mathcal{C}^* \triangleright \mathcal{A}}\boldsymbol{u}^k.
\end{equation*}
Multiplying by $\mathcal{C}^*$ at each step of \eqref{Hal-PPP-inex}, we have
\begin{equation}\label{Hal-PPP-inex-u}
\begin{aligned}
&\big\|\widetilde{\boldsymbol{u}}^k-\mathcal{J}_{\mathcal{C}^* \triangleright \mathcal{A}}\boldsymbol{u}^k\big\|
= \big\|\mathcal{C}^*(\boldsymbol{y}^k-\mathcal{J}_{ \mathcal{A}}^{\mathcal{M}}\boldsymbol{x}^k)\big\|
\leq \|\mathcal{C}^*\|_2\,\varepsilon_k,
\\
& \boldsymbol{u}^{k+1} = \beta_k \boldsymbol{u}^0 + (1 - \beta_k)\big[\rho \widetilde{\boldsymbol{u}}^k+(1-\rho)\boldsymbol{u}^k\big], \quad k \geq 0.
\end{aligned}
\end{equation}
Moreover, by Proposition \ref{parallel-comp}, $\mathcal{C}^*\triangleright\mathcal{A}$ is maximally monotone and its zero set is $\mathcal{C}^*\mathcal{S}$. For ease of the subsequent analysis, define
\begin{equation*}
\mathcal{R}_{\triangleright}:=\mathcal{I}-\mathcal{J}_{\mathcal{C}^*\triangleright\mathcal{A}}.
\end{equation*}
By Lemma \ref{lemma-nonexpansive} applied to $\mathcal{C}^*\triangleright\mathcal{A}$ with $\lambda=1$, $\mathcal{R}_{\triangleright}$ is 1-co-coercive. We use the nonexpansive operator $\Frho$ defined in Proposition \ref{parallel-comp}.

\section{Convergence analysis}\label{sec-convergence}

\subsection{Convergence}

We next analyze the convergence of the iHP-GPPA. To this end, we first characterize the boundedness property of the sequence generated by {\eqref{Hal-PPP-inex-u}} in the following proposition.

\begin{proposition}\label{propo-bound}\textup{(\textbf{Boundedness property})}
Suppose that the solution set $\mathcal{S}$ is nonempty. Let $\{\boldsymbol{u}^k\}$ be the sequence generated by {\eqref{Hal-PPP-inex-u}} with $\rho\in(0,2]$ and {$\beta_k\in[0,1]$}. Then, for any {point} $\boldsymbol{u}^*\in\mathcal{C}^*\mathcal{S}$, it holds that
\begin{equation}\label{eq-propo-bound}
\big\|\boldsymbol{u}^k-\boldsymbol{u}^*\big\|\leq \big\|\boldsymbol{u}^0-\boldsymbol{u}^*\big\|+\rho\sum_{i=0}^{k-1}\prod_{j=i}^{k-1}(1-\beta_j) {\|\mathcal{C}^*\|_2}\,\varepsilon_i, \quad \forall\,k\geq0.
\end{equation}
Moreover, if $\sum_{k=0}^{\infty}\varepsilon_k<\infty$, then $\{\boldsymbol{u}^k\}$ is bounded.
\end{proposition}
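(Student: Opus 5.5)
We argue by a one-step recursive estimate followed by induction on $k$. Fix $\boldsymbol{u}^*\in\mathcal{C}^*\mathcal{S}$. By Proposition \ref{parallel-comp}, $\boldsymbol{u}^*\in\operatorname{Fix}(\Frho)$, so $\Frho\boldsymbol{u}^*=\boldsymbol{u}^*$. Rewrite the update in \eqref{Hal-PPP-inex-u} as
\begin{equation*}
\boldsymbol{u}^{k+1}=\beta_k\boldsymbol{u}^0+(1-\beta_k)\big[\Frho\boldsymbol{u}^k+\rho\,\boldsymbol{e}^k\big],\qquad \boldsymbol{e}^k:=\widetilde{\boldsymbol{u}}^k-\mathcal{J}_{\mathcal{C}^*\triangleright\mathcal{A}}\boldsymbol{u}^k .
\end{equation*}
This holds because $\rho\widetilde{\boldsymbol{u}}^k+(1-\rho)\boldsymbol{u}^k=\Frho\boldsymbol{u}^k+\rho\boldsymbol{e}^k$. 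By \eqref{Hal-PPP-inex-u} we also have $\|\boldsymbol{e}^k\|\le\|\mathcal{C}^*\|_2\varepsilon_k$.

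Subtract $\boldsymbol{u}^*=\beta_k\boldsymbol{u}^*+(1-\beta_k)\Frho\boldsymbol{u}^*$. The triangle inequality and the nonexpansiveness of $\Frho$ (Proposition \ref{parallel-comp}) then give
\begin{equation*}
\|\boldsymbol{u}^{k+1}-\boldsymbol{u}^*\|\le\beta_k\|\boldsymbol{u}^0-\boldsymbol{u}^*\|+(1-\beta_k)\|\boldsymbol{u}^k-\boldsymbol{u}^*\|+(1-\beta_k)\rho\|\mathcal{C}^*\|_2\varepsilon_k .
\end{equation*}
This uses $\beta_k\in[0,1]$ so that both weights are nonnegative.

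We now prove \eqref{eq-propo-bound} by induction. For $k=0$ the sum is empty and the claim is trivial. Assume the bound holds at step $k$, and write $D:=\|\boldsymbol{u}^0-\boldsymbol{u}^*\|$. Inserting the induction hypothesis into the recursion gives
\begin{equation*}
\|\boldsymbol{u}^{k+1}-\boldsymbol{u}^*\|\le\beta_kD+(1-\beta_k)D+(1-\beta_k)\rho\sum_{i=0}^{k-1}\prod_{j=i}^{k-1}(1-\beta_j)\|\mathcal{C}^*\|_2\varepsilon_i+(1-\beta_k)\rho\|\mathcal{C}^*\|_2\varepsilon_k .
\end{equation*}
The first two terms combine to $D$. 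Multiplying each product by $(1-\beta_k)$ extends it to $\prod_{j=i}^{k}$. The last term is exactly the $i=k$ summand $\prod_{j=k}^{k}(1-\beta_j)\rho\|\mathcal{C}^*\|_2\varepsilon_k$. This yields the bound at step $k+1$, and the only care needed is this index bookkeeping.

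For boundedness, each product lies in $[0,1]$. Hence $\|\boldsymbol{u}^k-\boldsymbol{u}^*\|\le D+\rho\|\mathcal{C}^*\|_2\sum_{i=0}^{\infty}\varepsilon_i<\infty$ uniformly in $k$ whenever $\sum_k\varepsilon_k<\infty$, so $\{\boldsymbol{u}^k\}$ is bounded. No step is a genuine obstacle: the key point is simply to express the inexact step as an exact $\Frho$-step plus an error of size at most $\rho\|\mathcal{C}^*\|_2\varepsilon_k$, after which the argument is routine.
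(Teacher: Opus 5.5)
Your proposal is correct and follows essentially the same route as the paper: you write the inexact step as an exact $\mathcal{F}_{\rho}$-step plus an error of norm at most $\rho\|\mathcal{C}^*\|_2\varepsilon_k$, apply the triangle inequality and the nonexpansiveness of $\mathcal{F}_{\rho}$ at its fixed point $\boldsymbol{u}^*$, and close the induction with the same product bookkeeping. Your boundedness argument also matches the paper's: each product lies in $[0,1]$, and $\sum_k\varepsilon_k<\infty$.
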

\begin{proof}
We prove this proposition by induction. First, {\eqref{eq-propo-bound}} holds trivially for $k=0$. We now suppose that {\eqref{eq-propo-bound}} holds for $k=n$ for some integer $n\geq{0}$. Next, we show that {\eqref{eq-propo-bound}} also holds for $k=n+1$. Recall from Proposition \ref{parallel-comp} that the operator $\Frho$ defined in \eqref{eq-Frho} is nonexpansive for any $\rho\in(0,2]$. Moreover, it is known that any {point} $\boldsymbol{u}^*\in\mathcal{C}^*\mathcal{S}$ is a fixed point of $\Frho$. Using these facts and {\eqref{Hal-PPP-inex-u}}, we see that
\begin{equation*}
\begin{aligned}
&\big\|\boldsymbol{u}^{n+1}-\boldsymbol{u}^*\big\|
\;=\; \big\|\beta_n \boldsymbol{u}^0 + (1-\beta_n)\big[\rho\widetilde{\boldsymbol{u}}^n+(1-\rho)\boldsymbol{u}^n\big]-\boldsymbol{u}^*\big\| \\
&= \big\|\beta_n(\boldsymbol{u}^0-\boldsymbol{u}^*) + (1-\beta_n)\big[\rho \mathcal{J}_{\mathcal{C}^*\triangleright\mathcal{A}}\boldsymbol{u}^n+(1-\rho)\boldsymbol{u}^n-\boldsymbol{u}^*+\rho(\widetilde{\boldsymbol{u}}^n-\mathcal{J}_{\mathcal{C}^*\triangleright\mathcal{A}}\boldsymbol{u}^n)\big]\big\| \\
&\leq \beta_n\big\|\boldsymbol{u}^0-\boldsymbol{u}^*\big\|
+ (1-\beta_n)\big\|\Frho\boldsymbol{u}^n-\boldsymbol{u}^*\big\|
+ (1-\beta_n)\rho{\|\widetilde{\boldsymbol{u}}^n-\mathcal{J}_{\mathcal{C}^*\triangleright\mathcal{A}}\boldsymbol{u}^n\|}  \\
&\leq \beta_n\big\|\boldsymbol{u}^0-\boldsymbol{u}^*\big\| + (1-\beta_n)\big\|\boldsymbol{u}^n-\boldsymbol{u}^*\big\|
+ (1-\beta_n)\rho{\|\mathcal{C}^*\|_2\,\varepsilon_n}  \\
&\leq \beta_n\big\|\boldsymbol{u}^0-\boldsymbol{u}^*\big\| + (1-\beta_n)\Big(\big\|\boldsymbol{u}^0-\boldsymbol{u}^*\big\|+\rho\sum_{i=0}^{n-1}\prod_{j=i}^{n-1}(1-\beta_j)\|\mathcal{C}^*\|_2\,\varepsilon_i\Big) + (1-\beta_n)\rho\|\mathcal{C}^*\|_2\,\varepsilon_n\\
&= \big\|\boldsymbol{u}^0-\boldsymbol{u}^*\big\| + \rho\sum_{i=0}^{n-1}\prod_{j=i}^{n}(1-\beta_j)\|\mathcal{C}^*\|_2\,\varepsilon_i + (1-\beta_n)\rho\|\mathcal{C}^*\|_2\,\varepsilon_n\\
&= \big\|\boldsymbol{u}^0-\boldsymbol{u}^*\big\| + \rho\sum_{i=0}^{n}\prod_{j=i}^{n}(1-\beta_j)\|\mathcal{C}^*\|_2\,\varepsilon_i,
\end{aligned}
\end{equation*}
where the second inequality follows from the nonexpansivity of $\Frho$ and the inexactness criterion {\eqref{Hal-PPP-inex-u}}, and the third inequality follows from the induction hypothesis. Therefore, {\eqref{eq-propo-bound}} holds for $k=n+1$. This completes the induction. Moreover, if $\sum_{k=0}^{\infty}\varepsilon_k<\infty$, it readily follows from \eqref{eq-propo-bound} that the sequence $\{\boldsymbol{u}^k\}$ is bounded. This completes the proof.
\end{proof}

Then, based on the above boundedness property, the following lemma establishes the convergence of the fixed-point residual for \eqref{Hal-PPP-inex-u}.

\begin{lemma}\label{lemma-Rto0}
Suppose that the solution set $\mathcal{S}$ is nonempty. Let $\{\boldsymbol{u}^k\}$ be generated by {\eqref{Hal-PPP-inex-u}} with $\rho\in(0,2]$. If $\sum_{k=0}^{\infty}\varepsilon_k<\infty$ and the sequence $\{\beta_k\}_{k=0}^{\infty} \subset [0, 1]$ satisfies conditions: (i) $\lim\limits_{k\to\infty}\beta_k = 0$; (ii) $\sum_{k=0}^{\infty}\beta_k=\infty$; (iii) $\sum_{k=1}^{\infty}|\beta_k-\beta_{k-1}|<\infty$,
then it holds that
\begin{equation*}
\lim\limits_{k\to\infty}~\|\mathcal{R}_{\triangleright}\boldsymbol{u}^{k}\| = 0.
\end{equation*}
\end{lemma}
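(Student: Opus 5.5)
The plan is the classical Halpern argument: first show that consecutive iterates become asymptotically close, and then convert this into a bound on the residual. Throughout, write $\boldsymbol{e}^k:=\rho(1-\beta_k)(\widetilde{\boldsymbol{u}}^k-\mathcal{J}_{\mathcal{C}^*\triangleright\mathcal{A}}\boldsymbol{u}^k)$. By \eqref{Hal-PPP-inex-u} we have $\|\boldsymbol{e}^k\|\le \rho\|\mathcal{C}^*\|_2\varepsilon_k$, and the iteration can be written as
\[
\boldsymbol{u}^{k+1}=\beta_k\boldsymbol{u}^0+(1-\beta_k)\Frho\boldsymbol{u}^k+\boldsymbol{e}^k .
\]
Since $\sum_k\varepsilon_k<\infty$, Proposition \ref{propo-bound} shows that $\{\boldsymbol{u}^k\}$ is bounded. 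Nonexpansivity of $\Frho$, together with $\Frho\boldsymbol{u}^*=\boldsymbol{u}^*$, then shows that $\{\Frho\boldsymbol{u}^k\}$ is bounded as well. Let $K$ be a common bound for $\|\boldsymbol{u}^0\|$ and $\sup_k\|\Frho\boldsymbol{u}^k\|$.

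\textbf{Step 1: consecutive differences.} Subtract the update formulas at indices $k$ and $k-1$:
\[
\boldsymbol{u}^{k+1}-\boldsymbol{u}^k=(\beta_k-\beta_{k-1})(\boldsymbol{u}^0-\Frho\boldsymbol{u}^{k-1})+(1-\beta_k)(\Frho\boldsymbol{u}^k-\Frho\boldsymbol{u}^{k-1})+\boldsymbol{e}^k-\boldsymbol{e}^{k-1}.
\]
Nonexpansivity of $\Frho$ gives
\[
\|\boldsymbol{u}^{k+1}-\boldsymbol{u}^k\|\le(1-\beta_k)\|\boldsymbol{u}^k-\boldsymbol{u}^{k-1}\|+2K|\beta_k-\beta_{k-1}|+\rho\|\mathcal{C}^*\|_2(\varepsilon_k+\varepsilon_{k-1}).
\]
Apply Lemma \ref{lemma-sk-converge} with $s_k=\|\boldsymbol{u}^k-\boldsymbol{u}^{k-1}\|$, $\alpha_k=\beta_k$, $\gamma_k=0$, and $\tau_k=2K|\beta_k-\beta_{k-1}|+\rho\|\mathcal{C}^*\|_2(\varepsilon_k+\varepsilon_{k-1})$. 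Hypothesis (ii) gives $\sum_k\alpha_k=\infty$, and hypothesis (iii) with $\sum_k\varepsilon_k<\infty$ gives $\sum_k\tau_k<\infty$. The lemma then yields $\|\boldsymbol{u}^{k+1}-\boldsymbol{u}^k\|\to0$.

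\textbf{Step 2: from differences to the residual.} Using the same rewriting of the iteration,
\[
\boldsymbol{u}^k-\Frho\boldsymbol{u}^k=(\boldsymbol{u}^k-\boldsymbol{u}^{k+1})+\beta_k(\boldsymbol{u}^0-\Frho\boldsymbol{u}^k)+\boldsymbol{e}^k .
\]
This gives
\[
\|\boldsymbol{u}^k-\Frho\boldsymbol{u}^k\|\le\|\boldsymbol{u}^{k+1}-\boldsymbol{u}^k\|+2K\beta_k+\rho\|\mathcal{C}^*\|_2\varepsilon_k .
\]
The first term tends to $0$ by Step 1 and the second by hypothesis (i). The third tends to $0$ because $\sum_k\varepsilon_k<\infty$ forces $\varepsilon_k\to0$. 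Finally, $\boldsymbol{u}-\Frho\boldsymbol{u}=\rho\,\mathcal{R}_{\triangleright}\boldsymbol{u}$ and $\rho>0$, so $\|\mathcal{R}_{\triangleright}\boldsymbol{u}^k\|\to0$.

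\textbf{Main obstacle.} The delicate point is Step 1: the error and anchor-drift terms must fit the format of Lemma \ref{lemma-sk-converge}. This requires boundedness of $\{\Frho\boldsymbol{u}^k\}$, which is supplied by Proposition \ref{propo-bound}, and summability of $\tau_k$, which is exactly what (iii) and $\sum_k\varepsilon_k<\infty$ provide. The index shift in $\sum_k|\beta_k-\beta_{k-1}|$ versus the lemma's sums starting at $n=1$ is harmless. The case $\beta_k=1$ for some $k$ is also harmless, since then $1-\beta_k=0$ and the recursion holds trivially.
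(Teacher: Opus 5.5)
Your proposal is correct and follows essentially the same route as the paper. It uses the same recursive bound on $\|\boldsymbol{u}^{k+1}-\boldsymbol{u}^k\|$, closed by Lemma \ref{lemma-sk-converge} with $\gamma_k=0$, and then the same triangle-inequality conversion via $\rho\|\mathcal{R}_{\triangleright}\boldsymbol{u}^k\|=\|\boldsymbol{u}^k-\Frho\boldsymbol{u}^k\|$; the only cosmetic differences are that you package the inexactness into $\boldsymbol{e}^k$ and bound $\|\boldsymbol{u}^0-\Frho\boldsymbol{u}^{k-1}\|$ by $2K$ instead of a generic constant $B$.
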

\begin{proof}
We first establish that $\lim_{k\to\infty}\|\boldsymbol{u}^{k+1}-\boldsymbol{u}^k\|=0$. By the update rule {\eqref{Hal-PPP-inex-u}}, we have
\begin{equation*}
\begin{aligned}
    &\big\|\boldsymbol{u}^{k+1}-\boldsymbol{u}^k\big\|
    \\
    =&\; \big\|(\beta_k-\beta_{k-1}) \boldsymbol{u}^0 + (1 - \beta_k)\big[\rho\widetilde{\boldsymbol{u}}^k+(1-\rho)\boldsymbol{u}^k\big] - (1 - \beta_{k-1})\big[\rho\widetilde{\boldsymbol{u}}^{k-1}+(1-\rho)\boldsymbol{u}^{k-1}\big]\big\|\\
    \leq&\;\big\|(\beta_k-\beta_{k-1}) \boldsymbol{u}^0 + (1 - \beta_k)\Frho\boldsymbol{u}^k - (1 - \beta_{k-1})\Frho\boldsymbol{u}^{k-1}\big\|+\rho(\|\mathcal{C}^*\|_2\,\varepsilon_k+
    \|\mathcal{C}^*\|_2\,\varepsilon_{k-1}),
\end{aligned}
\end{equation*}
where the inequality follows from
the {identity $\Frho \boldsymbol{u}^k+\rho(\widetilde{\boldsymbol{u}}^k- \mathcal{J}_{\mathcal{C}^*\triangleright\mathcal{A}} \boldsymbol{u}^k) = \rho \widetilde{\boldsymbol{u}}^k + (1-\rho)\boldsymbol{u}^k$}
 and the inexactness criterion {\eqref{Hal-PPP-inex-u}}.
By the triangle inequality, the first term on the right-hand side can be bounded as
\begin{equation*}
\begin{aligned}
&\big\|(\beta_k-\beta_{k-1}) \boldsymbol{u}^0 + (1 - \beta_k)\Frho\boldsymbol{u}^k - (1 - \beta_{k-1})\Frho\boldsymbol{u}^{k-1}\big\|\\
\leq& (1 - \beta_k)\big\|\Frho\boldsymbol{u}^k - \Frho\boldsymbol{u}^{k-1}\big\|+|\beta_k-\beta_{k-1}|\big\|\boldsymbol{u}^0 - \Frho\boldsymbol{u}^{k-1}\big\|\\
\leq& (1 - \beta_k)\big\|\boldsymbol{u}^k - \boldsymbol{u}^{k-1}\big\|+|\beta_k-\beta_{k-1}|\big\|\boldsymbol{u}^0 - \Frho\boldsymbol{u}^{k-1}\big\|,
\end{aligned}
\end{equation*}
where the last inequality follows from the nonexpansivity of $\Frho$. Substituting this into the previous inequality yields
\begin{equation}\label{ineq-x_dec}
    \big\|\boldsymbol{u}^{k+1}-\boldsymbol{u}^k\big\|\leq(1 - \beta_k)\big\|\boldsymbol{u}^k - \boldsymbol{u}^{k-1}\big\|+|\beta_k-\beta_{k-1}|\big\|\boldsymbol{u}^0 - \Frho\boldsymbol{u}^{k-1}\big\|+\rho(\|\mathcal{C}^*\|_2\,\varepsilon_k+\|\mathcal{C}^*\|_2\,\varepsilon_{k-1}).
\end{equation}
By Proposition \ref{propo-bound} and the nonexpansivity of $\Frho$, {there exists $B>0$ such that $\|\boldsymbol{u}^0-\Frho\boldsymbol{u}^{k-1}\|\leq B$}. We now apply Lemma \ref{lemma-sk-converge} to \eqref{ineq-x_dec}. Let $s_k=\|\boldsymbol{u}^{{k}}-\boldsymbol{u}^{{k-1}}\|$, $\gamma_k={0}$, and
$\tau_k=B|\beta_k-\beta_{k-1}|+\rho(\|\mathcal{C}^*\|_2\,\varepsilon_k+\|\mathcal{C}^*\|_2\,\varepsilon_{k-1})$. {Condition (iii) and the summability of $\{\varepsilon_k\}$ give $\sum_{k=1}^{\infty}\tau_k<\infty$.} Hence, Lemma \ref{lemma-sk-converge} gives $\lim_{k\to\infty}\|\boldsymbol{u}^{k+1}-\boldsymbol{u}^k\|=0$.

Next, we show $\lim_{k \to \infty}\|\mathcal{R}_{\triangleright}\boldsymbol{u}^k\| = 0$. By the definitions of $\mathcal{R}_{\triangleright}$ and $\Frho$, we have that
\begin{equation}\label{eq-R-T_rho}
  \rho\big\|\mathcal{R}_{\triangleright}\boldsymbol{u}^k \big\| =  \rho\big\|\boldsymbol{u}^k - \mathcal{J}_{\mathcal{C}^*\triangleright\mathcal{A}}\boldsymbol{u}^k\big\|
    = \big\|\boldsymbol{u}^k - \Frho\boldsymbol{u}^k\big\|
    \leq\big\|\boldsymbol{u}^{k+1} - \boldsymbol{u}^k\big\|+\big\|\boldsymbol{u}^{k+1} - \Frho\boldsymbol{u}^k\big\|.
\end{equation}
Substituting the update rule {\eqref{Hal-PPP-inex-u}} into $\|\boldsymbol{u}^{k+1} - \Frho\boldsymbol{u}^k\|$ and applying the inexactness criterion {\eqref{Hal-PPP-inex-u}}, we have that
\begin{equation*}
   \big\|\boldsymbol{u}^{k+1} - \Frho\boldsymbol{u}^k\big\|\leq\big\|\beta_k \boldsymbol{u}^0 + (1 - \beta_k)\big[\rho\widetilde{\boldsymbol{u}}^k+(1-\rho)\boldsymbol{u}^k\big] - \Frho\boldsymbol{u}^k\big\|\leq \beta_k\big\|\boldsymbol{u}^0 - \Frho\boldsymbol{u}^k\big\|+(1 - \beta_k)\rho\|\mathcal{C}^*\|_2\,\varepsilon_k.
\end{equation*}
Using the above inequality in \eqref{eq-R-T_rho}, we obtain that
\begin{equation*}
    \rho\|\mathcal{R}_{\triangleright}\boldsymbol{u}^k\| \leq \|\boldsymbol{u}^{k+1} - \boldsymbol{u}^k\| + \beta_k\|\boldsymbol{u}^0 - \Frho\boldsymbol{u}^k\| + (1 - \beta_k)\rho\|\mathcal{C}^*\|_2\,\varepsilon_k.
\end{equation*}
By Proposition \ref{propo-bound}, the sequence $\{\boldsymbol{u}^k\}$ is bounded. Since $\Frho$ is nonexpansive, $\|\boldsymbol{u}^0 - \Frho\boldsymbol{u}^k\|$ is bounded. Moreover, $\lim_{k \to \infty}\|\boldsymbol{u}^{k+1} - \boldsymbol{u}^k\| = 0$, $\lim_{k \to \infty}\beta_k = 0$, and $\lim_{k \to \infty}\varepsilon_k = 0$. Taking the limit as $k \to \infty$ on both sides, we get $\lim_{k \to \infty}\|\mathcal{R}_{\triangleright}\boldsymbol{u}^k\| = 0$. This completes the proof.
\end{proof}

With the above preparations, we are now ready to establish the convergence of the whole sequence generated by {\eqref{Hal-PPP-inex-u}}. Our convergence analysis is similar to the analytical approach used in \cite{x2002Iterative}, but we generalize the selection of the relaxation parameter $\rho$.

\begin{lemma}\label{lemma-aux-convergence}\textup{(\textbf{Global convergence})}
    Suppose that the solution set $\mathcal{S}$ is nonempty. Let $\{\boldsymbol{u}^k\}$ be generated by {\eqref{Hal-PPP-inex-u}} with $\rho\in(0,2]$ and $\beta_k$ satisfying the conditions in Lemma \ref{lemma-Rto0}. If $\sum_{k=0}^{\infty}\varepsilon_k<\infty$, then
    \begin{equation*}
        \lim_{k\to\infty} \boldsymbol{u}^k = \boldsymbol{q}:= P_{\mathcal{C}^*\mathcal{S}}(\boldsymbol{u}^0),
    \end{equation*}
    which implies that $\{\boldsymbol{u}^k\}_{k=0}^{\infty}$ converges to the {point in $\mathcal{C}^*\mathcal{S}$} that is closest to $\boldsymbol{u}^0$.
\end{lemma}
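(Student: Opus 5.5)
The argument follows Xu's strong-convergence proof for Halpern iterations, carried out in $\mathbb{R}^m$ for the auxiliary sequence. The set $\mathcal{C}^*\mathcal{S}=\operatorname{Fix}(\Frho)$ is nonempty, closed and convex, being the fixed-point set of a nonexpansive map (Proposition \ref{parallel-comp}). Hence $\boldsymbol{q}=P_{\mathcal{C}^*\mathcal{S}}(\boldsymbol{u}^0)$ is well defined and satisfies the variational inequality
\[
\langle \boldsymbol{u}^0-\boldsymbol{q},\, \boldsymbol{z}-\boldsymbol{q}\rangle\le 0 \quad \text{for all } \boldsymbol{z}\in\mathcal{C}^*\mathcal{S}.
\]

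The first step is to show
\[
\limsup_{k\to\infty}\langle \boldsymbol{u}^0-\boldsymbol{q},\,\boldsymbol{u}^{k}-\boldsymbol{q}\rangle\le 0 .
\]
By Proposition \ref{propo-bound}, $\{\boldsymbol{u}^k\}$ is bounded, so we may pick a subsequence $\boldsymbol{u}^{k_j}\to\bar{\boldsymbol{u}}$ along which the $\limsup$ is attained. Lemma \ref{lemma-Rto0} gives $\mathcal{R}_{\triangleright}\boldsymbol{u}^{k_j}\to0$. Since $\mathcal{R}_{\triangleright}$ is continuous, being $1$-co-coercive and hence Lipschitz, we get $\mathcal{R}_{\triangleright}\bar{\boldsymbol{u}}=0$, i.e.\ $\bar{\boldsymbol{u}}\in\operatorname{Fix}(\mathcal{J}_{\mathcal{C}^*\triangleright\mathcal{A}})=\mathcal{C}^*\mathcal{S}$. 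The variational inequality then yields the claim. Since $\boldsymbol{u}^{k+1}-\boldsymbol{u}^k\to0$ (established inside the proof of Lemma \ref{lemma-Rto0}), the same $\limsup$ bound holds with $\boldsymbol{u}^{k+1}$ in place of $\boldsymbol{u}^k$.

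The second step is a recursive inequality for $s_k:=\|\boldsymbol{u}^k-\boldsymbol{q}\|^2$. Write
\[
\boldsymbol{u}^{k+1}-\boldsymbol{q}=\beta_k(\boldsymbol{u}^0-\boldsymbol{q})+(1-\beta_k)\boldsymbol{w}^k,
\qquad
\boldsymbol{w}^k:=\Frho\boldsymbol{u}^k-\boldsymbol{q}+\rho\big(\widetilde{\boldsymbol{u}}^k-\mathcal{J}_{\mathcal{C}^*\triangleright\mathcal{A}}\boldsymbol{u}^k\big).
\]
Apply the subdifferential inequality $\|a+b\|^2\le\|a\|^2+2\langle b,a+b\rangle$ with $a=(1-\beta_k)\boldsymbol{w}^k$ and $b=\beta_k(\boldsymbol{u}^0-\boldsymbol{q})$. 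Nonexpansivity of $\Frho$ and $\Frho\boldsymbol{q}=\boldsymbol{q}$ give $\|\boldsymbol{w}^k\|\le\|\boldsymbol{u}^k-\boldsymbol{q}\|+\rho\|\mathcal{C}^*\|_2\varepsilon_k$. Combining, and using $(1-\beta_k)^2\le 1-\beta_k$,
\[
s_{k+1}\le(1-\beta_k)\big(\sqrt{s_k}+\rho\|\mathcal{C}^*\|_2\varepsilon_k\big)^2+2\beta_k\langle \boldsymbol{u}^0-\boldsymbol{q},\boldsymbol{u}^{k+1}-\boldsymbol{q}\rangle .
\]
Expanding the square and using the uniform bound $\sqrt{s_k}\le D$ from Proposition \ref{propo-bound} gives
\[
s_{k+1}\le(1-\beta_k)s_k+\beta_k\gamma_k+\tau_k,
\qquad
\gamma_k=2\langle \boldsymbol{u}^0-\boldsymbol{q},\boldsymbol{u}^{k+1}-\boldsymbol{q}\rangle,
\quad
\tau_k=2D\rho\|\mathcal{C}^*\|_2\varepsilon_k+\rho^2\|\mathcal{C}^*\|_2^2\varepsilon_k^2 .
\]
Here $\tau_k$ is summable because $\sum\varepsilon_k<\infty$, and $\limsup\gamma_k\le0$ by the first step. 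Since $\sum\beta_k=\infty$, Lemma \ref{lemma-sk-converge} yields $s_k\to0$, i.e.\ $\boldsymbol{u}^k\to\boldsymbol{q}$.

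The main obstacle is the first step. One must justify that every cluster point of $\{\boldsymbol{u}^k\}$ lies in $\mathcal{C}^*\mathcal{S}$ (a demiclosedness argument, made easy here by finite dimension and continuity of $\mathcal{R}_{\triangleright}$). One must also shift the index from $k$ to $k+1$ inside the $\limsup$, which requires the asymptotic regularity $\boldsymbol{u}^{k+1}-\boldsymbol{u}^k\to0$ from the proof of Lemma \ref{lemma-Rto0}. The bookkeeping of the error terms in the second step is routine once boundedness is in hand.
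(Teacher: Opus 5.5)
Your proposal is correct and follows essentially the same route as the paper's proof. Both obtain the $\limsup$ bound from a cluster-point argument, using Lemma \ref{lemma-Rto0} and the projection variational inequality, and then apply Lemma \ref{lemma-sk-converge} to the recursion $s_{k+1}\le(1-\beta_k)s_k+\beta_k\gamma_k+\tau_k$ with $\gamma_k=2\langle \boldsymbol{u}^0-\boldsymbol{q},\boldsymbol{u}^{k+1}-\boldsymbol{q}\rangle$. One small correction: replacing $\boldsymbol{u}^k$ by $\boldsymbol{u}^{k+1}$ inside the $\limsup$ needs no asymptotic regularity, since $\limsup_k a_{k+1}=\limsup_k a_k$ for any real sequence, so that part of your ``main obstacle'' is automatic (the paper uses it without comment).
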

\begin{proof}
By Proposition \ref{parallel-comp}, $\mathcal{C}^{*}\mathcal{S}=\operatorname{Fix}(\Frho)$. This set is nonempty because $\mathcal{S}\neq\varnothing$, and is closed and convex in $\mathbb{R}^{m}$ because $\Frho$ is nonexpansive. Thus,  the projection $P_{\mathcal{C}^{*}\mathcal{S}}$ is well-defined. Next, we show $\{\boldsymbol{u}^k\}_{k=0}^{\infty}$ converges to $\boldsymbol{q}:=P_{\mathcal{C}^*\mathcal{S}}(\boldsymbol{u}^0)$. Take a subsequence $\{\boldsymbol{u}^{k_j}\}$ of $\{\boldsymbol{u}^{k}\}$ such that
\begin{equation}\label{eq-subseq}
\limsup_{k\to\infty} \langle \boldsymbol{u}^0 - \boldsymbol{q}, \boldsymbol{u}^k - \boldsymbol{q} \rangle = \lim_{j\to\infty} \langle \boldsymbol{u}^0 - \boldsymbol{q}, \boldsymbol{u}^{k_j} - \boldsymbol{q} \rangle.
\end{equation}
By Proposition \ref{propo-bound}, the sequence $\{\boldsymbol{u}^k\}_{k=0}^{\infty}$ is bounded, so the subsequence $\{\boldsymbol{u}^{k_j}\}_{j=0}^{\infty}$ is also bounded. By the Bolzano-Weierstrass theorem, and passing to a subsequence if necessary, we may assume $\lim_{j\to\infty}\boldsymbol{u}^{k_j}= \boldsymbol{u}^\infty$. Moreover, from Lemma \ref{lemma-Rto0}, we have that $\lim_{j\to\infty}\big\|\boldsymbol{u}^{k_j} - \mathcal{J}_{\mathcal{C}^*\triangleright\mathcal{A}}\boldsymbol{u}^{k_j}\big\|=0$. Since $\mathcal{J}_{\mathcal{C}^*\triangleright\mathcal{A}}$ is nonexpansive, it is continuous. Thus, taking the limit as $j \to \infty$ gives $\boldsymbol{u}^\infty = \mathcal{J}_{\mathcal{C}^*\triangleright\mathcal{A}}\boldsymbol{u}^\infty$ and hence $\boldsymbol{u}^\infty \in \mathcal{C}^*\mathcal{S}$. Combining \eqref{eq-subseq} with {\cite[Section 3]{gs1984uniform}}, we obtain
\begin{equation}\label{eq-subseq-leq}
\limsup_k \langle \boldsymbol{u}^0 - \boldsymbol{q}, \boldsymbol{u}^k - \boldsymbol{q} \rangle = \langle \boldsymbol{u}^0 - \boldsymbol{q}, \boldsymbol{u}^\infty - \boldsymbol{q} \rangle\leq 0.
\end{equation}

Next, we derive a recursive inequality for $\|\boldsymbol{u}^{k+1} - \boldsymbol{q}\|^2$. Using the update rule {\eqref{Hal-PPP-inex-u}} and the inequality $\|\boldsymbol{x}+\boldsymbol{y}\|^2\leq \|\boldsymbol{x}\|^2+2\langle \boldsymbol{y},\boldsymbol{x}+\boldsymbol{y}\rangle$, we have that
\begin{equation}\label{ineq-converge-q}
\begin{aligned}
\|\boldsymbol{u}^{k+1}-\boldsymbol{q}\|^2
&= \|\beta_k(\boldsymbol{u}^0-\boldsymbol{q})+(1-\beta_k)[\rho\widetilde{\boldsymbol{u}}^k+(1-\rho)\boldsymbol{u}^k-\boldsymbol{q}]\|^2\\
&\leq (1-\beta_k)\|\rho\widetilde{\boldsymbol{u}}^k+(1-\rho)\boldsymbol{u}^k-\boldsymbol{q}\|^2 + 2\beta_k\langle \boldsymbol{u}^0 - \boldsymbol{q}, \boldsymbol{u}^{k+1} - \boldsymbol{q} \rangle,
\end{aligned}
\end{equation}
Recalling the nonexpansivity of $\Frho$ defined in \eqref{eq-Frho}, we have that
\begin{align*}
    \,\|\rho\widetilde{\boldsymbol{u}}^k+(1-\rho)\boldsymbol{u}^k-\boldsymbol{q}\|^2
    \;\leq\; & (\|\rho\mathcal{J}_{\mathcal{C}^*\triangleright\mathcal{A}}\boldsymbol{u}^k+(1-\rho)\boldsymbol{u}^k-\boldsymbol{q}\|+\rho\|\mathcal{C}^*\|_2\varepsilon_k)^2
    \\
    \leq\; &(\|\boldsymbol{u}^k-\boldsymbol{q}\|+\rho\|\mathcal{C}^*\|_2\,\varepsilon_k)^2,
\end{align*}
where the first inequality follows from the inexactness criterion {\eqref{Hal-PPP-inex-u}}, and the second inequality follows from the nonexpansivity of $\Frho$ {and the fact that
$\boldsymbol{q}=\mathcal{J}_{\mathcal{C}^*\triangleright\mathcal{A}}\boldsymbol{q}$.} Substituting the expression above into \eqref{ineq-converge-q} yields
\begin{equation}\label{ineq-bound-q}
\begin{aligned}
& \|\boldsymbol{u}^{k+1}-\boldsymbol{q}\|^2
\leq(1-\beta_k)(\|\boldsymbol{u}^k-\boldsymbol{q}\|+\rho\|\mathcal{C}^*\|_2\,\varepsilon_k)^2+2\beta_k\langle \boldsymbol{u}^0 - \boldsymbol{q}, \boldsymbol{u}^{k+1} - \boldsymbol{q} \rangle\\
&\leq (1-\beta_k)\|\boldsymbol{u}^k-\boldsymbol{q}\|^2 +{2\rho\|\mathcal{C}^*\|_2\,\varepsilon_k(\rho\|\mathcal{C}^*\|_2\,\varepsilon_k + \|\boldsymbol{u}^k-\boldsymbol{q}\|)}
\;+\; 2\beta_k\langle \boldsymbol{u}^0 - \boldsymbol{q}, \boldsymbol{u}^{k+1} - \boldsymbol{q} \rangle.
\end{aligned}
\end{equation}
Let $\tau_k = 2\rho\|\mathcal{C}^*\|_2\,\varepsilon_k\left(\rho\|\mathcal{C}^*\|_2\,\varepsilon_k + \|\boldsymbol{u}^k - \boldsymbol{q}\|\right)$. By Proposition \ref{propo-bound}, $\|\boldsymbol{u}^k - \boldsymbol{q}\|$ is bounded, and since $\sum_{k=0}^{\infty}\varepsilon_k < \infty$, we have $\sum_{k=0}^{\infty}\tau_k < \infty$.
We now apply Lemma \ref{lemma-sk-converge} to the recursive inequality \eqref{ineq-bound-q}. Let $s_k = \|\boldsymbol{u}^k - \boldsymbol{q}\|^2$, $\gamma_k = 2\langle \boldsymbol{u}^0 - \boldsymbol{q}, \boldsymbol{u}^{k+1} - \boldsymbol{q} \rangle$ with $\limsup_{k \to \infty}\gamma_k \leq 0$ by \eqref{eq-subseq-leq}, and $\tau_k$ as defined above. By Lemma \ref{lemma-sk-converge}, we conclude that $\lim_{k \to \infty} s_k = 0$, which implies $\lim_{k \to \infty}\|\boldsymbol{u}^k - \boldsymbol{q}\| = 0$. This completes the proof.
\end{proof}

Based on the auxiliary sequences in \eqref{auxi-seq}, we can establish the convergence properties of scheme \eqref{Hal-PPP-inex} in the following theorem.

\begin{theorem}\label{thm-preconditioned-convergence}\textup{(\textbf{Global convergence})}
Suppose that $\mathcal{M}$ is an admissible preconditioner and the solution set $\mathcal{S}$ is nonempty. Let $\mathcal{M}=\mathcal{C}\mathcal{C}^*$ be a decomposition of $\mathcal{M}$, $\{\boldsymbol{x}^k\}$ and $\{\boldsymbol{y}^k\}$ be the sequences generated by \eqref{Hal-PPP-inex} with {$\rho\in(0,2]$ and $\{\beta_k\}\subset(0,1)$ satisfying the conditions in Lemma \ref{lemma-Rto0}}. Suppose that $\sum_{k=0}^{\infty}\varepsilon_k<\infty$. Then, the sequence $\{\boldsymbol{y}^k\}$ converges to a solution $\boldsymbol{x}^*\in\mathcal{S}${, which is the unique minimizer of $\|\boldsymbol{x}^0-\boldsymbol{x}\|_{\mathcal{M}}$ over $\boldsymbol{x}\in\mathcal{S}$. Moreover, if $0<\rho<2$, then $\boldsymbol{x}^k\to\boldsymbol{x}^*$.}
\end{theorem}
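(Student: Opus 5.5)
The plan is to lift the convergence of the auxiliary sequence $\boldsymbol{u}^k=\mathcal{C}^*\boldsymbol{x}^k$ from Lemma \ref{lemma-aux-convergence} back to $\boldsymbol{y}^k$ and $\boldsymbol{x}^k$ through the operator $(\mathcal{M}+\mathcal{A})^{-1}\mathcal{C}$. By Lemma \ref{lemma-aux-convergence}, $\boldsymbol{u}^k\to\boldsymbol{q}=P_{\mathcal{C}^*\mathcal{S}}(\boldsymbol{u}^0)$. Since $\mathcal{J}^{\mathcal{M}}_{\mathcal{A}}\boldsymbol{x}=(\mathcal{M}+\mathcal{A})^{-1}\mathcal{C}(\mathcal{C}^*\boldsymbol{x})$, define $\mathcal{T}:=(\mathcal{M}+\mathcal{A})^{-1}\mathcal{C}:\mathbb{R}^m\to\mathbb{R}^n$. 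Admissibility together with surjectivity of $\mathcal{C}^*$ makes $\mathcal{T}$ single-valued with full domain. I will show $\mathcal{T}$ is continuous; in fact it is Lipschitz. For $\boldsymbol{v}_i\in\mathbb{R}^m$, set $\boldsymbol{w}_i=\mathcal{T}\boldsymbol{v}_i$. Then $\mathcal{C}\boldsymbol{v}_i-\mathcal{M}\boldsymbol{w}_i\in\mathcal{A}\boldsymbol{w}_i$, and monotonicity gives $\langle \mathcal{C}(\boldsymbol{v}_1-\boldsymbol{v}_2)-\mathcal{M}(\boldsymbol{w}_1-\boldsymbol{w}_2),\boldsymbol{w}_1-\boldsymbol{w}_2\rangle\ge0$. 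This controls only $\mathcal{C}^*(\boldsymbol{w}_1-\boldsymbol{w}_2)$, so it is not enough by itself.

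For full continuity in finite dimensions I would use that $\mathcal{T}$ is single-valued, everywhere defined, and has closed graph, since $\mathcal{A}$ is maximal monotone. I would then show local boundedness and conclude continuity by the closed graph argument. Local boundedness is the main obstacle. The first thing I would try is to note that $\mathcal{T}$ is itself maximal-monotone-like in a suitable sense: the operator $\boldsymbol{w}\mapsto$ (the inclusion $\mathcal{C}\boldsymbol{v}\in(\mathcal{M}+\mathcal{A})\boldsymbol{w}$) makes $\mathcal{T}^{-1}$ monotone-composed. Alternatively, I would cite the continuity result of \cite{bcln2022degenerate}, which shows that for admissible $\mathcal{M}$ the map $(\mathcal{M}+\mathcal{A})^{-1}\mathcal{C}$ is Lipschitz. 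I expect to rely on that citation rather than reprove it.

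With continuity in hand, $\mathcal{J}^{\mathcal{M}}_{\mathcal{A}}\boldsymbol{x}^k=\mathcal{T}\boldsymbol{u}^k\to\mathcal{T}\boldsymbol{q}=:\boldsymbol{x}^*$. Because $\|\boldsymbol{y}^k-\mathcal{J}^{\mathcal{M}}_{\mathcal{A}}\boldsymbol{x}^k\|\le\varepsilon_k\to0$, it follows that $\boldsymbol{y}^k\to\boldsymbol{x}^*$. Next I identify $\boldsymbol{x}^*$. By Proposition \ref{parallel-comp}, $\boldsymbol{q}\in\operatorname{Fix}(\mathcal{J}_{\mathcal{C}^*\triangleright\mathcal{A}})$, so $\boldsymbol{q}=\mathcal{C}^*\mathcal{T}\boldsymbol{q}=\mathcal{C}^*\boldsymbol{x}^*$. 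Hence $\mathcal{C}\boldsymbol{q}-\mathcal{M}\boldsymbol{x}^*=0\in\mathcal{A}\boldsymbol{x}^*$, which gives $\boldsymbol{x}^*\in\mathcal{S}$. For minimality, note that $\|\boldsymbol{x}^0-\boldsymbol{x}\|_{\mathcal{M}}=\|\boldsymbol{u}^0-\mathcal{C}^*\boldsymbol{x}\|$, and $\mathcal{C}^*\boldsymbol{x}^*=\boldsymbol{q}$ is the projection of $\boldsymbol{u}^0$ onto $\mathcal{C}^*\mathcal{S}$. So $\boldsymbol{x}^*$ minimizes the seminorm distance. For uniqueness, suppose $\boldsymbol{x}\in\mathcal{S}$ is another minimizer. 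Then $\mathcal{C}^*\boldsymbol{x}=\boldsymbol{q}$, and since $\boldsymbol{x}\in\mathcal{S}$ is a fixed point of $\mathcal{J}^{\mathcal{M}}_{\mathcal{A}}$, we get $\boldsymbol{x}=\mathcal{T}\mathcal{C}^*\boldsymbol{x}=\mathcal{T}\boldsymbol{q}=\boldsymbol{x}^*$.

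For $0<\rho<2$, I rewrite \eqref{step-Hal-relax} as $\boldsymbol{x}^{k+1}-\boldsymbol{x}^*=\beta_k(\boldsymbol{x}^0-\boldsymbol{x}^*)+(1-\beta_k)\rho(\boldsymbol{y}^k-\boldsymbol{x}^*)+(1-\beta_k)(1-\rho)(\boldsymbol{x}^k-\boldsymbol{x}^*)$. Setting $e_k=\|\boldsymbol{x}^k-\boldsymbol{x}^*\|$ gives $e_{k+1}\le|1-\rho|e_k+\beta_k\|\boldsymbol{x}^0-\boldsymbol{x}^*\|+\rho\|\boldsymbol{y}^k-\boldsymbol{x}^*\|$. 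Since $|1-\rho|<1$ and the last two terms tend to $0$, the standard linear-recursion lemma yields $e_k\to0$. This step fails at $\rho=2$ because the contraction factor $|1-\rho|$ equals $1$, which is consistent with the statement restricting this conclusion to $0<\rho<2$.
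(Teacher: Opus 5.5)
Apart from one step, your argument coincides with the paper's. The following are all correct:
\begin{itemize}
\item defining $\boldsymbol{x}^*=(\mathcal{M}+\mathcal{A})^{-1}\mathcal{C}\boldsymbol{q}$;
\item showing $\mathcal{C}^*\boldsymbol{x}^*=\boldsymbol{q}$ and $\boldsymbol{x}^*\in\mathcal{S}$;
\item the projection argument for minimality and uniqueness;
\item the recursion with factor $|1-\rho|<1$.
\end{itemize}
The gap is the continuity of your map $\mathcal{T}=(\mathcal{M}+\mathcal{A})^{-1}\mathcal{C}$ at $\boldsymbol{q}$, which is the only nonroutine point of the theorem. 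The result you intend to cite does not hold in the form you state: admissibility does not make $(\mathcal{M}+\mathcal{A})^{-1}\mathcal{C}$ Lipschitz. In \cite{bcln2022degenerate}, Lipschitz continuity of $(\mathcal{M}+\mathcal{A})^{-1}$ is imposed as an additional assumption rather than derived, just as this paper adds it as a separate hypothesis in Corollary \ref{cor-euclidean-residual}. For a counterexample, take $\mathcal{M}=\mathrm{diag}(1,0)$, $\mathcal{C}=(1,0)^\top$, and $\mathcal{A}=\partial f$ with $f(w_1,w_2)=w_2$ if $w_2\geq w_1^2$ and $+\infty$ otherwise. Then $\mathcal{M}$ is admissible, $\mathcal{S}=\{\boldsymbol{0}\}$, and $\mathcal{T}v=(v/3,\,v^2/9)^\top$, which is continuous but not Lipschitz. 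Your fallback (closed graph plus local boundedness) is viable, but you explicitly leave local boundedness open, so as written continuity is not established.

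The paper handles this step in three moves:
\begin{itemize}
\item $\mathcal{B}:=(\mathcal{M}+\mathcal{A})^{-1}$ is maximally monotone on $\mathbb{R}^n$, because $\mathcal{M}$ is linear, monotone and everywhere defined;
\item admissibility makes $\mathcal{B}$ single-valued at $\mathcal{C}\boldsymbol{q}$;
\item a maximally monotone operator on $\mathbb{R}^n$ is continuous at every point where it is single-valued \cite[Corollary 1.3(4)]{aa1999geometrical}.
\end{itemize}
Only continuity at the limit point is needed, not a global modulus.

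You can also finish your own route using the very inequality you set aside. Let $\boldsymbol{v}_k\to\boldsymbol{q}$ and $\boldsymbol{w}_k=\mathcal{T}\boldsymbol{v}_k$, so that $\boldsymbol{a}_k:=\mathcal{C}\boldsymbol{v}_k-\mathcal{M}\boldsymbol{w}_k\in\mathcal{A}\boldsymbol{w}_k$. Recall $\mathcal{M}\boldsymbol{x}^*=\mathcal{C}\boldsymbol{q}$, so $(\boldsymbol{x}^*,0)\in\operatorname{gph}\mathcal{A}$. Monotonicity against this point gives $\|\mathcal{C}^*(\boldsymbol{w}_k-\boldsymbol{x}^*)\|\leq\|\boldsymbol{v}_k-\boldsymbol{q}\|$, hence $\boldsymbol{a}_k\to0$. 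Suppose $\{\boldsymbol{w}_k\}$ were unbounded, and let $\boldsymbol{d}$ be a cluster point of $(\boldsymbol{w}_k-\boldsymbol{x}^*)/\|\boldsymbol{w}_k-\boldsymbol{x}^*\|$. Then $\|\boldsymbol{d}\|=1$ and $\mathcal{C}^*\boldsymbol{d}=0$. Dividing $\langle\boldsymbol{a}_k-\boldsymbol{s},\boldsymbol{w}_k-\boldsymbol{z}\rangle\geq0$ by $\|\boldsymbol{w}_k-\boldsymbol{x}^*\|$ and passing to the limit gives $\langle\boldsymbol{s},\boldsymbol{d}\rangle\leq0$ for all $(\boldsymbol{z},\boldsymbol{s})\in\operatorname{gph}\mathcal{A}$. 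Combined with $0\in\mathcal{A}\boldsymbol{x}^*$ and maximality, this yields $0\in\mathcal{A}(\boldsymbol{x}^*+t\boldsymbol{d})$. Since $\mathcal{M}\boldsymbol{d}=0$, it follows that $\boldsymbol{x}^*+t\boldsymbol{d}\in(\mathcal{M}+\mathcal{A})^{-1}\mathcal{C}\boldsymbol{q}$ for every $t\geq0$, contradicting single-valuedness. Boundedness together with your closed-graph observation then gives $\boldsymbol{w}_k\to\boldsymbol{x}^*$, which is the continuity you need.
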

\begin{proof}
It follows from {Lemma \ref{lemma-aux-convergence}} that {$\boldsymbol{u}^k\to\boldsymbol{u}^*:=\boldsymbol{q}\in\mathcal{C}^*\mathcal{S}$}.
Moreover, $\boldsymbol{u}^*$ solves $0 \in (\mathcal{C}^* \triangleright \mathcal{A})\boldsymbol{u}$ if and only if $\boldsymbol{u}^*=\mathcal{J}_{\mathcal{C}^* \triangleright \mathcal{A}}\boldsymbol{u}^*$.
Thus $\boldsymbol{u}^* \in {\operatorname{Fix}}( \mathcal{J}_{\mathcal{C}^* \triangleright \mathcal{A}})$. By utilizing Proposition \ref{parallel-comp}, we have
\[\mathcal{C}^* {\operatorname{zero}}(\mathcal{A}) = \mathcal{C}^* \, {\operatorname{Fix}} ({\mathcal{J}}_{\mathcal{A}}^{\mathcal{M}}) = {\operatorname{Fix}} (\mathcal{J}_{\mathcal{C}^* \triangleright \mathcal{A}}).
\]
Let $\boldsymbol{x}^* = (\mathcal{M} + \mathcal{A})^{-1} \mathcal{C} \boldsymbol{u}^*$. {Proposition \ref{parallel-comp} gives $\mathcal{C}^*\boldsymbol{x}^*=\mathcal{J}_{\mathcal{C}^*\triangleright\mathcal{A}}\boldsymbol{u}^*=
\boldsymbol{u}^*$.}

{Let $\mathcal{B}:=(\mathcal{M}+\mathcal{A})^{-1}$. Since $\mathcal{M}$ is monotone, continuous, and defined on all of $\mathbb{R}^n$, $\mathcal{M}+\mathcal{A}$ and hence $\mathcal{B}$ are maximally monotone. Admissibility implies that $\mathcal{B}$ is single-valued on $\operatorname{ran}\mathcal{M}=\operatorname{ran}\mathcal{C}$; in particular, $\mathcal{B}(\mathcal{C}\boldsymbol{u}^*)=\{\boldsymbol{x}^*\}$. By \cite[Corollary 1.3(4)]{aa1999geometrical}, $\mathcal{B}$ is continuous at $\mathcal{C}\boldsymbol{u}^*$, and thus $\boldsymbol{u}\mapsto\mathcal{B}(\mathcal{C}\boldsymbol{u})$ is continuous at $\boldsymbol{u}^*$.}

We now apply this continuity to $\boldsymbol{u}^k\to\boldsymbol{u}^*$. By the relationship $\boldsymbol{u}^k = \mathcal{C}^*\boldsymbol{x}^k$, we obtain
\begin{equation}\label{eq-y-convergence-bound}
\|\boldsymbol{y}^k-\boldsymbol{x}^*\|\leq \big\|{\mathcal{J}}_{\mathcal{A}}^{\mathcal{M}}\boldsymbol{x}^k-\boldsymbol{x}^*\big\|+\varepsilon_k = \|(\mathcal{M} + \mathcal{A})^{-1} \mathcal{C}\boldsymbol{u}^k - (\mathcal{M} + \mathcal{A})^{-1} \mathcal{C} \boldsymbol{u}^*\| + \varepsilon_k,
\end{equation}
where the inequality follows from $\big\|\boldsymbol{y}^k-\mathcal{J}^{\mathcal{M}}_{\mathcal{A}}\boldsymbol{x}^k\big\| \leq \varepsilon_k$.
The continuity established above and $\boldsymbol{u}^k\to\boldsymbol{u}^*$ imply that the first term on the right-hand side tends to zero. Since $\varepsilon_k\to0$, we conclude that
\begin{equation*}
\lim_{k\to\infty}\|\boldsymbol{y}^k-\boldsymbol{x}^*\| = 0.
\end{equation*}
Furthermore, we have
\begin{equation*}
\begin{aligned}
\mathcal{J}_{\mathcal{A}}^{\mathcal{M}}\boldsymbol{x}^* &= (\mathcal{M} + \mathcal{A})^{-1}\mathcal{C}\mathcal{C}^*\boldsymbol{x}^*
=(\mathcal{M} + \mathcal{A})^{-1}\mathcal{C}\boldsymbol{u}^* = \boldsymbol{x}^*,
\end{aligned}
\end{equation*}
which implies that $\boldsymbol{x}^*\in
\operatorname{Fix}(\mathcal{J}_{\mathcal{A}}^{\mathcal{M}}) = \operatorname{zero}(\mathcal{A})$ by Proposition \ref{parallel-comp}.

Since $\boldsymbol{u}^*=P_{\mathcal{C}^*\mathcal{S}}(\boldsymbol{u}^0)$, for any $\boldsymbol{s}\in\mathcal{S}$,
\[
 \|\boldsymbol{x}^0-\boldsymbol{x}^*\|_{\mathcal{M}}
 =\|\boldsymbol{u}^0-\boldsymbol{u}^*\|
 \leq\|\boldsymbol{u}^0-\mathcal{C}^*\boldsymbol{s}\|
 =\|\boldsymbol{x}^0-\boldsymbol{s}\|_{\mathcal{M}}.
\]
If $\boldsymbol{s}$ is another minimizer, uniqueness of the projection gives $\mathcal{C}^*\boldsymbol{s}=\mathcal{C}^*\boldsymbol{x}^*$, and hence $\mathcal{M}\boldsymbol{s}=\mathcal{M}\boldsymbol{x}^*$. By admissibility,
$\boldsymbol{s}={\mathcal{J}}_{\mathcal{A}}^{\mathcal{M}}\boldsymbol{s}={\mathcal{J}}_{\mathcal{A}}^{\mathcal{M}}\boldsymbol{x}^*=\boldsymbol{x}^*$.

For $0<\rho<2$, the update \eqref{step-Hal-relax} gives
\begin{equation}\label{eq-x-convergence-recursion}
\|\boldsymbol{x}^{k+1}-\boldsymbol{x}^*\|
\leq |1-\rho|\|\boldsymbol{x}^{k}-\boldsymbol{x}^*\|
+\beta_k\|\boldsymbol{x}^0-\boldsymbol{x}^*\|
+\rho\|\boldsymbol{y}^k-\boldsymbol{x}^*\|.
\end{equation}
Since $|1-\rho|<1$, $\beta_k\to0$, and $\boldsymbol{y}^k\to\boldsymbol{x}^*$, Lemma \ref{lemma-sk-converge}, applied with the coefficients {$\alpha_k = 1-|1-\rho|$, $\gamma_k = \alpha_k^{-1} (\beta_k\|\boldsymbol{x}^0-\boldsymbol{x}^*\|+\rho\|\boldsymbol{y}^k-\boldsymbol{x}^*\|)$, $\tau_k=0$,} gives $\boldsymbol{x}^k\to\boldsymbol{x}^*$.
\hfill
\end{proof}

Theorem \ref{thm-preconditioned-convergence} extends the convergence results in \cite[Theorem 2.7(a)]{syzz2024acc-pADMM} from exact subproblems with $\beta_k=\frac{1}{k+2}$ to inexact subproblems with more general anchoring parameters. Based on the above convergence result, a summable inexactness tolerance sequence $\{\varepsilon_k\}_{k=0}^{\infty}$ is sufficient to satisfy the inexactness condition in Theorem \ref{thm-preconditioned-convergence}. A canonical choice in practice is to set $\varepsilon_k = \mathcal{O}\left(\frac{1}{k^{1+\delta}}\right)$ for some constant $\delta > 0$, as this decay rate guarantees summability of the sequence. Moreover, the anchoring parameter sequence $\{\beta_k\}_{k=0}^{\infty}$ {can be chosen as} $\beta_k = {\frac{1}{k+r}}$ {with $r\geq2$}, {for which} all the conditions imposed on $\{\beta_k\}$ in Lemma \ref{lemma-Rto0} are fulfilled, as required for convergence.

\begin{corollary}[Positive definite $\mathcal{M}$]\label{rem-positive-definite}
Under the conditions on $\{\beta_k\}$ and $\{\varepsilon_k\}$ in Theorem \ref{thm-preconditioned-convergence}, suppose that $\mathcal{M}$ is positive definite and the solution set $\mathcal{S}$ is nonempty. Let $\{\boldsymbol{x}^k\}$ be generated by \eqref{Hal-PPP-inex} with $\rho\in(0,2]$. Then $\boldsymbol{x}^k\to\boldsymbol{x}^*$, where $\boldsymbol{x}^*$ is the unique minimizer of $\|\boldsymbol{x}^0-\boldsymbol{x}\|_{\mathcal{M}}$ over $\boldsymbol{x}\in\mathcal{S}$.
\end{corollary}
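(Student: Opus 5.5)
The plan is to reduce the corollary to Theorem \ref{thm-preconditioned-convergence}. The only new point is the case $\rho=2$, since for $0<\rho<2$ the theorem already gives $\boldsymbol{x}^k\to\boldsymbol{x}^*$. When $\mathcal{M}$ is positive definite, the decomposition $\mathcal{M}=\mathcal{C}\mathcal{C}^*$ of Proposition \ref{M-decomposition} has $m=\operatorname{rank}\mathcal{M}=n$. Hence $\mathcal{C}$ is a square injective operator and thus invertible, and $\mathcal{C}^*$ is invertible as well. Admissibility is automatic in this case: $\mathcal{M}+\mathcal{A}$ is strongly monotone and maximally monotone, so $(\mathcal{M}+\mathcal{A})^{-1}$ is single-valued with full domain. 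I will nevertheless invoke the theorem under its stated hypotheses.

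The key step runs as follows. By Lemma \ref{lemma-aux-convergence}, $\boldsymbol{u}^k=\mathcal{C}^*\boldsymbol{x}^k\to\boldsymbol{u}^*=\boldsymbol{q}$. By the proof of Theorem \ref{thm-preconditioned-convergence}, $\mathcal{C}^*\boldsymbol{x}^*=\boldsymbol{u}^*$, where $\boldsymbol{x}^*$ is the $\mathcal{M}$-nearest solution. Since $\mathcal{C}^*$ is invertible,
\[
\|\boldsymbol{x}^k-\boldsymbol{x}^*\|=\|(\mathcal{C}^*)^{-1}(\boldsymbol{u}^k-\boldsymbol{u}^*)\|\leq\|(\mathcal{C}^*)^{-1}\|_2\,\|\boldsymbol{u}^k-\boldsymbol{u}^*\|\to0,
\]
for every $\rho\in(0,2]$, including $\rho=2$. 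Equivalently, one can use $\lambda_{\min}(\mathcal{M})\|\boldsymbol{x}^k-\boldsymbol{x}^*\|^2\leq\|\boldsymbol{x}^k-\boldsymbol{x}^*\|_{\mathcal{M}}^2=\|\boldsymbol{u}^k-\boldsymbol{u}^*\|^2$, which avoids naming $\mathcal{C}^{-1}$ explicitly.

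For identifying the limit, the theorem already shows that $\boldsymbol{x}^*\in\mathcal{S}$ is the unique minimizer of $\|\boldsymbol{x}^0-\boldsymbol{x}\|_{\mathcal{M}}$ over $\mathcal{S}$. In the positive definite case, uniqueness also follows directly: $\|\cdot\|_{\mathcal{M}}$ is a genuine Euclidean norm and $\mathcal{S}$ is closed and convex.

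There is no real obstacle here. The one point to state carefully is that the argument bypasses the recursion \eqref{eq-x-convergence-recursion}, which degenerates at $\rho=2$ because $|1-\rho|=1$. The argument transfers convergence of $\{\boldsymbol{u}^k\}$ to $\{\boldsymbol{x}^k\}$ through the norm equivalence that positive definiteness provides.
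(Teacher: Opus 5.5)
Your proposal is correct and follows essentially the same route as the paper. The paper also gets admissibility from positive definiteness, $\boldsymbol{u}^k\to\boldsymbol{u}^*$ from Lemma \ref{lemma-aux-convergence}, and $\mathcal{C}^*\boldsymbol{x}^*=\boldsymbol{u}^*$ from the proof of Theorem \ref{thm-preconditioned-convergence}; it then transfers convergence to $\{\boldsymbol{x}^k\}$ for all $\rho\in(0,2]$ through the equivalence of $\|\cdot\|_{\mathcal{M}}$ and $\|\cdot\|$, which your $\lambda_{\min}(\mathcal{M})$ bound (or invertibility of $\mathcal{C}^*$) simply makes explicit.
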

\begin{proof}
Since $\mathcal{M}$ is positive definite, it is admissible, so Theorem \ref{thm-preconditioned-convergence} applies. By Lemma \ref{lemma-aux-convergence} and the proof of Theorem \ref{thm-preconditioned-convergence}, we have that $\|\boldsymbol{x}^k-\boldsymbol{x}^*\|_{\mathcal{M}}
=\|\boldsymbol{u}^k-\boldsymbol{u}^*\|\to0$. Since $\|\cdot\|_{\mathcal{M}}$ is a norm equivalent to $\|\cdot\|$, we have $\boldsymbol{x}^k\to\boldsymbol{x}^*$ for all $\rho\in(0,2]$.
\hfill
\end{proof}

The following corollary relaxes the summability requirement on the inexactness tolerances and allows certain nonsummable tolerance sequences.

\begin{corollary}[Convergence with nonsummable tolerances]\label{cor-vanishing-errors}
The conclusions of Theorem \ref{thm-preconditioned-convergence} and Corollary \ref{rem-positive-definite} remain valid if $\sum_k\varepsilon_k<\infty$ is replaced by
\begin{equation}\label{eq-vanishing-errors}
 E_k:=\sum_{i=0}^{k-1}\varepsilon_i\prod_{j=i}^{k-1}(1-\beta_j)\longrightarrow0.
\end{equation}
This condition holds if either $\sum_k\varepsilon_k<\infty$ or $\varepsilon_k=o(\beta_k)$.
\end{corollary}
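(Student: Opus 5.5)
Throughout the earlier analysis, the summability of $\{\varepsilon_k\}$ enters only through three consequences: boundedness of $\{\boldsymbol{u}^k\}$ (Proposition \ref{propo-bound}), summability of the error terms $\tau_k$ fed into Lemma \ref{lemma-sk-converge} (in Lemma \ref{lemma-Rto0} and Lemma \ref{lemma-aux-convergence}), and $\varepsilon_k\to0$ (in the proof of Theorem \ref{thm-preconditioned-convergence}). The plan is to show that \eqref{eq-vanishing-errors} still delivers each of these, with the $\tau_k$ terms absorbed into the $\alpha_n\gamma_n$ part of Lemma \ref{lemma-sk-converge} rather than required to be summable.

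\textbf{Boundedness and $\varepsilon_k\to0$.} The bound \eqref{eq-propo-bound} reads $\|\boldsymbol{u}^k-\boldsymbol{u}^*\|\le\|\boldsymbol{u}^0-\boldsymbol{u}^*\|+\rho\|\mathcal{C}^*\|_2E_k$. Since $E_k\to0$, $\{E_k\}$ is bounded and hence $\{\boldsymbol{u}^k\}$ is bounded. Moreover $E_{k+1}\ge(1-\beta_k)\varepsilon_k$ and $\beta_k\to0$, so $\varepsilon_k\to0$.

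\textbf{Reworking the recursions.} Neither \eqref{ineq-x_dec} nor \eqref{ineq-bound-q} can now use summable $\tau_k$, so I would treat the error directly by a perturbation argument. I would compare $\{\boldsymbol{u}^k\}$ with the exact Halpern sequence $\boldsymbol{v}^{k+1}=\beta_k\boldsymbol{u}^0+(1-\beta_k)\Frho\boldsymbol{v}^k$, $\boldsymbol{v}^0=\boldsymbol{u}^0$. By the nonexpansivity of $\Frho$ and the inexactness bound in \eqref{Hal-PPP-inex-u},
\[
\|\boldsymbol{u}^{k+1}-\boldsymbol{v}^{k+1}\|\le(1-\beta_k)\big(\|\boldsymbol{u}^k-\boldsymbol{v}^k\|+\rho\|\mathcal{C}^*\|_2\varepsilon_k\big).
\]
Unrolling this gives $\|\boldsymbol{u}^k-\boldsymbol{v}^k\|\le\rho\|\mathcal{C}^*\|_2E_k\to0$. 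The exact sequence $\{\boldsymbol{v}^k\}$ is the case $\varepsilon\equiv0$ of Lemma \ref{lemma-aux-convergence}, so $\boldsymbol{v}^k\to\boldsymbol{q}$, and therefore $\boldsymbol{u}^k\to\boldsymbol{q}$. This sidesteps redoing Lemma \ref{lemma-Rto0} entirely. The rest of the proof of Theorem \ref{thm-preconditioned-convergence} and Corollary \ref{rem-positive-definite} then goes through verbatim: it uses only $\boldsymbol{u}^k\to\boldsymbol{q}$, $\varepsilon_k\to0$, and, for $0<\rho<2$, the recursion \eqref{eq-x-convergence-recursion} with $\tau_k=0$.

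\textbf{The sufficient conditions.} If $\sum\varepsilon_k<\infty$, I would split $E_k$ at an index $N$. The tail satisfies $\sum_{i\ge N}\varepsilon_i<\delta$, while each head term carries the factor $\prod_{j=N}^{k-1}(1-\beta_j)\le\exp(-\sum_{j=N}^{k-1}\beta_j)\to0$ by condition (ii). If instead $\varepsilon_k=o(\beta_k)$, write $\varepsilon_i\le\delta\beta_i$ for $i\ge N$. The telescoping identity $\sum_{i=N}^{k-1}\beta_i\prod_{j=i}^{k-1}(1-\beta_j)\le\sum_{i=N}^{k-1}\big[\prod_{j=i+1}^{k-1}(1-\beta_j)-\prod_{j=i}^{k-1}(1-\beta_j)\big]\le1$ bounds the tail by $\delta$, and the head again vanishes. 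I expect the main obstacle to be the comparison step. One must check that Lemma \ref{lemma-aux-convergence} applies to the exact sequence without any further hypothesis, which it does since $\varepsilon\equiv0$ is summable.
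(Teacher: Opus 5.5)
Your proposal is correct and follows essentially the same route as the paper. The paper's steps are exactly these:
\begin{itemize}
\item compare with the exact Halpern sequence $\boldsymbol{v}^{k+1}=\beta_k\boldsymbol{u}^0+(1-\beta_k)\mathcal{F}_{\rho}\boldsymbol{v}^k$;
\item bound $\|\boldsymbol{u}^k-\boldsymbol{v}^k\|\leq\rho\|\mathcal{C}^*\|_2E_k$;
\item apply Lemma \ref{lemma-aux-convergence} to the error-free sequence;
\item use $E_{k+1}\geq(1-\beta_k)\varepsilon_k$ to obtain $\varepsilon_k\to0$.
\end{itemize}
The only cosmetic difference is in verifying the two sufficient conditions. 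You do an explicit head/tail splitting, whereas the paper feeds the recursion $E_{k+1}=(1-\beta_k)E_k+(1-\beta_k)\varepsilon_k$ into Lemma \ref{lemma-sk-converge}, with $\tau_k=(1-\beta_k)\varepsilon_k$ in the summable case and $\gamma_k=(1-\beta_k)\varepsilon_k/\beta_k$ in the $o(\beta_k)$ case; that lemma packages the same argument.
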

\begin{proof}
Let $\boldsymbol{v}^0=\boldsymbol{u}^0$ and
$\boldsymbol{v}^{k+1}=\beta_k\boldsymbol{u}^0+(1-\beta_k)\Frho\boldsymbol{v}^k$.
Nonexpansivity and \eqref{Hal-PPP-inex-u} give
\[
 \|\boldsymbol{u}^{k+1}-\boldsymbol{v}^{k+1}\|
 \leq(1-\beta_k)\big(\|\boldsymbol{u}^k-\boldsymbol{v}^k\|
       +\rho\|\mathcal{C}^*\|_2\varepsilon_k\big),
 \qquad
 \|\boldsymbol{u}^k-\boldsymbol{v}^k\|\leq\rho\|\mathcal{C}^*\|_2E_k.
\]
{Applying Lemma \ref{lemma-aux-convergence} to the exact sequence $\{\boldsymbol{v}^k\}$ yields $\boldsymbol{v}^k\to\boldsymbol{u}^*:=P_{\mathcal{C}^*\mathcal{S}}(\boldsymbol{u}^0)$. Thus, the above bound and $E_k\to0$ give $\boldsymbol{u}^k\to\boldsymbol{u}^*$. Also, $E_{k+1}\geq(1-\beta_k)\varepsilon_k$ and $\beta_k\to0$ imply $\varepsilon_k\to0$. Hence, \eqref{eq-y-convergence-bound} and the continuity established in the proof of Theorem \ref{thm-preconditioned-convergence} give $\boldsymbol{y}^k\to\boldsymbol{x}^*$. For $0<\rho<2$, \eqref{eq-x-convergence-recursion} and Lemma \ref{lemma-sk-converge} then yield $\boldsymbol{x}^k\to\boldsymbol{x}^*$. When $\mathcal{M}\succ0$, norm equivalence gives this convergence also for $\rho=2$, as in Corollary \ref{rem-positive-definite}.}

Finally, we verify the two sufficient conditions for $E_k\to0$. From its definition, $E_{k+1}=(1-\beta_k)E_k+(1-\beta_k)\varepsilon_k$.
Lemma \ref{lemma-sk-converge} proves $E_k\to0$ with
$\gamma_k=0$, $\tau_k=(1-\beta_k)\varepsilon_k$ in the summable case, or with
$\gamma_k=(1-\beta_k)\varepsilon_k/\beta_k\to0$, $\tau_k=0$ in the second case.
\hfill
\end{proof}

For $\beta_k=1/(k+r)$, condition \eqref{eq-vanishing-errors} becomes
$E_k=\sum_{i=0}^{k-1}(i+r-1)\varepsilon_i/(k+r-1)\to0$. Thus convergence holds whenever $\varepsilon_k=o(1/k)$, including the nonsummable choice $\varepsilon_k=1/((k+2)\log(k+2))$. This condition alone does not assert an accelerated residual rate.

The convergence of $\{\boldsymbol{x}^k\}$ in Theorem \ref{thm-preconditioned-convergence} cannot in general be extended to $\rho=2$ when $\mathcal{M}$ is singular, even with exact subproblems.

\begin{example}[Nonconvergence of $\{\boldsymbol{x}^k\}$ at $\rho=2$ when $\mathcal{M}$ is singular]\label{rem-singular-endpoint}
Take
\[
 \mathcal{M}=\begin{pmatrix}1&0\\0&0\end{pmatrix},
 \qquad
 \mathcal{A}=\begin{pmatrix}0&1\\-1&0\end{pmatrix}.
\]
The linear operator $\mathcal{A}$ is maximally monotone and $\mathcal{S}=\{\boldsymbol{0}\}$. Moreover,
\[
 (\mathcal{M}+\mathcal{A})^{-1}
 =\begin{pmatrix}0&-1\\1&1\end{pmatrix},
 \qquad
 {\mathcal{J}}_{\mathcal{A}}^{\mathcal{M}}
 \begin{pmatrix}
    x_1 \\ x_2
 \end{pmatrix}=\begin{pmatrix}
     0 \\ x_1
 \end{pmatrix}.
\]
Thus $\mathcal{M}$ is admissible and $(\mathcal{M}+\mathcal{A})^{-1}$ is Lipschitz. Set $\rho=2$, $\beta_k=\frac{1}{k+2}$, $\varepsilon_k=0$, and $\boldsymbol{x}^0=(1,0)^\top$. Direct substitution in \eqref{Hal-PPP-inex} gives, for $m\geq0$,
\[
 \boldsymbol{x}^{2m}
 =\left(\frac{1}{2m+1},-\frac{2m}{2m+1}\right)^\top,
 \qquad
 \boldsymbol{x}^{2m+1}=(0,1)^\top.
\]
Hence $\boldsymbol{y}^k={\mathcal{J}}_{\mathcal{A}}^{\mathcal{M}}\boldsymbol{x}^k\to\boldsymbol{0}$, but $\boldsymbol{x}^{2m}\to(0,-1)^\top$ and $\boldsymbol{x}^{2m+1}=(0,1)^\top$. In particular, the Euclidean residual $\|\boldsymbol{x}^k-{\mathcal{J}}_{\mathcal{A}}^{\mathcal{M}}\boldsymbol{x}^k\|=\|\boldsymbol{x}^k-\boldsymbol{y}^k\|$ does not tend to zero.
\end{example}

\subsection{Convergence rate}

In this subsection, we analyze the convergence rate of iHP-GPPA with $\beta_k=1/(k+r)$ and $r\geq2$. To this end, we first state a useful lemma for {\eqref{Hal-PPP-inex-u}}.

\begin{lemma}\label{lemma-z}
Let $\{\boldsymbol{u}^k\}$ be the sequence generated by {\eqref{Hal-PPP-inex-u}} with $\rho\in (0,2]$ and $\beta_k\in(0,1)$. Then, for $k \geq 0$, it holds that
\begin{enumerate}
\item[(i)] $\boldsymbol{u}^{k+1} - \boldsymbol{u}^k = \beta_k(\boldsymbol{u}^0 - \boldsymbol{u}^k) - \rho(1-\beta_k) (\boldsymbol{u}^k-\widetilde{\boldsymbol{u}}^k)$;
\item[(ii)] $\boldsymbol{u}^{k+1} - \boldsymbol{u}^k = \frac{\beta_k}{1 - \beta_k}(\boldsymbol{u}^0 - \boldsymbol{u}^{k+1}) - \rho (\boldsymbol{u}^k-\widetilde{\boldsymbol{u}}^k)$.
\end{enumerate}
\end{lemma}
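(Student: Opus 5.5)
Both identities are purely algebraic rearrangements of the update rule in \eqref{Hal-PPP-inex-u}. No monotonicity, nonexpansivity or inexactness bound is needed. The only requirement is $\beta_k\in(0,1)$, so that the division by $1-\beta_k$ in (ii) is legitimate. The plan is to rewrite the update step as
\[
\boldsymbol{u}^{k+1}=\beta_k\boldsymbol{u}^0+(1-\beta_k)\big[\boldsymbol{u}^k-\rho(\boldsymbol{u}^k-\widetilde{\boldsymbol{u}}^k)\big],
\]
using $\rho\widetilde{\boldsymbol{u}}^k+(1-\rho)\boldsymbol{u}^k=\boldsymbol{u}^k-\rho(\boldsymbol{u}^k-\widetilde{\boldsymbol{u}}^k)$, and then isolate $\boldsymbol{u}^{k+1}-\boldsymbol{u}^k$ in two different ways.

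For (i), subtract $\boldsymbol{u}^k$ from both sides. Writing $\boldsymbol{u}^k=\beta_k\boldsymbol{u}^k+(1-\beta_k)\boldsymbol{u}^k$ gives
\[
\boldsymbol{u}^{k+1}-\boldsymbol{u}^k=\beta_k(\boldsymbol{u}^0-\boldsymbol{u}^k)-\rho(1-\beta_k)(\boldsymbol{u}^k-\widetilde{\boldsymbol{u}}^k),
\]
which is exactly (i).

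For (ii), start from the rewritten update, move $\beta_k\boldsymbol{u}^0$ to the left, and divide by $1-\beta_k>0$:
\[
\frac{\boldsymbol{u}^{k+1}-\beta_k\boldsymbol{u}^0}{1-\beta_k}=\boldsymbol{u}^k-\rho(\boldsymbol{u}^k-\widetilde{\boldsymbol{u}}^k).
\]
Then subtract $\boldsymbol{u}^k$ from both sides and add $\boldsymbol{u}^{k+1}-\tfrac{\boldsymbol{u}^{k+1}-\beta_k\boldsymbol{u}^0}{1-\beta_k}$ to both sides. This is equivalent to noting that
\[
\boldsymbol{u}^{k+1}-\frac{\boldsymbol{u}^{k+1}-\beta_k\boldsymbol{u}^0}{1-\beta_k}=\frac{\beta_k}{1-\beta_k}(\boldsymbol{u}^0-\boldsymbol{u}^{k+1}).
\]
This yields $\boldsymbol{u}^{k+1}-\boldsymbol{u}^k=\frac{\beta_k}{1-\beta_k}(\boldsymbol{u}^0-\boldsymbol{u}^{k+1})-\rho(\boldsymbol{u}^k-\widetilde{\boldsymbol{u}}^k)$, which is (ii).

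There is no real obstacle. The only point needing care is the coefficient bookkeeping in (ii), namely verifying $1-\frac{1}{1-\beta_k}=-\frac{\beta_k}{1-\beta_k}$, which is where $\beta_k\neq1$ is used.
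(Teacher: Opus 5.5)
Your proof is correct and is essentially the paper's argument: both are pure rearrangements of the update in \eqref{Hal-PPP-inex-u}, and the only nontrivial step is dividing by $1-\beta_k$. The paper derives (ii) by solving the update for $\boldsymbol{u}^0-\boldsymbol{u}^k$ and substituting into (i), whereas you isolate $\boldsymbol{u}^{k+1}-\boldsymbol{u}^k$ directly; this difference is immaterial.
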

\begin{proof}
The first relation is straightforward. For the second relation, we first observe that
\begin{equation*}
\boldsymbol{u}^0 - \boldsymbol{u}^{k+1} = (1 - \beta_k)(\boldsymbol{u}^0 - \boldsymbol{u}^k) + \rho(1 - \beta_k) (\boldsymbol{u}^k-\widetilde{\boldsymbol{u}}^k).
\end{equation*}
Rearranging the terms, we obtain
\begin{equation*}
\boldsymbol{u}^0 - \boldsymbol{u}^k = \frac{1}{1 - \beta_k}(\boldsymbol{u}^0 - \boldsymbol{u}^{k+1}) - \rho (\boldsymbol{u}^k-\widetilde{\boldsymbol{u}}^k).
\end{equation*}
Substituting this expression into the relation in statement (i) yields the desired result.
\hfill
\end{proof}

Then we characterize the descent property of an auxiliary sequence in the following {lemma}.

\begin{lemma}\label{lemma-suff-dec}\textup{(\textbf{Descent property})}
Let $\{\boldsymbol{u}^k\}$ be the sequence generated by {\eqref{Hal-PPP-inex-u}} with $\rho\in (0,2]$ and $\beta_k\in(0,1)$. Then, for all $k\geq0$ and $r \geq 2$, the following statements hold.
\begin{itemize}
    \item[(i)] For $\rho\in(0,2)$, it holds that
        \begin{equation}\label{ineq-any-beta}
        \begin{aligned}
        &\quad {\textstyle\frac{\rho}{2}}\big\|\mathcal{R}_{\triangleright}\boldsymbol{u}^{k+1}\big\|^2
        - {\textstyle\frac{\beta_k}{1-\beta_k}}\langle\mathcal{R}_{\triangleright}\boldsymbol{u}^{k+1}, \,\boldsymbol{u}^0-\boldsymbol{u}^{k+1} \rangle
        \\
        &
        \leq {\textstyle\frac{\rho(1-2\beta_k)}{2}}\big\|\mathcal{R}_{\triangleright}\boldsymbol{u}^{k}\big\|^2
        - \beta_k \langle \mathcal{R}_{\triangleright}\boldsymbol{u}^{k}, \,\boldsymbol{u}^0-\boldsymbol{u}^k \rangle  \\
        &\quad + \rho\beta_k\langle\mathcal{R}_{\triangleright}\boldsymbol{u}^{k}, \,\widetilde{\boldsymbol{u}}^k-\mathcal{J}_{\mathcal{C}^*\triangleright\mathcal{A}}\boldsymbol{u}^k \rangle
        + {\textstyle\frac{\rho^2}{2(2-\rho)}}\big\|\widetilde{\boldsymbol{u}}^k-\mathcal{J}_{\mathcal{C}^*\triangleright\mathcal{A}}\boldsymbol{u}^k \big\|^2.
        \end{aligned}
        \end{equation}
        Moreover, if $\beta_k=\frac{1}{ k+r}$ for all $k\geq 0$, it holds that
        \begin{equation}\label{ineq-suff-dec}
        \mathcal{L}_{k+1} \leq \mathcal{L}_k + {\textstyle\frac{\rho}{12}} \big\|\mathcal{R}_{\triangleright}\boldsymbol{u}^{k}\big\|^2 + {\textstyle\frac{2\rho(3-\rho)( k+r-1)^2}{(2-\rho)}}
        \big\|\widetilde{\boldsymbol{u}}^k - \mathcal{J}_{\mathcal{C}^*\triangleright\mathcal{A}}\boldsymbol{u}^k\big\|^2,
        \end{equation}
        where $\mathcal{L}_k$ is defined as
        \begin{equation}\label{def-Lk}
        \mathcal{L}_k
        := {\textstyle\frac{\rho ( k+r-2)( k+r-1)}{2}}\big\|\mathcal{R}_{\triangleright}\boldsymbol{u}^{k}\big\|^2
        - ( k+r-1)\langle \mathcal{R}_{\triangleright}\boldsymbol{u}^{k}, \,\boldsymbol{u}^0 - \boldsymbol{u}^k \rangle,
        \quad \forall\,k \geq 0.
        \end{equation}

    \item[(ii)] {If $\rho=2$, $\beta_k=1/(k+r)$, and $\widetilde{\boldsymbol{u}}^k=\mathcal{J}_{\mathcal{C}^*\triangleright\mathcal{A}}\boldsymbol{u}^k$ for all $k\geq0$, then the sequence in \eqref{def-Lk}, with $\rho=2$, satisfies}
        \begin{equation}\label{ineq-suff-dec-rho2}
        {\mathcal{L}_{k+1}\leq\mathcal{L}_k.}
        \end{equation}
\end{itemize}
\end{lemma}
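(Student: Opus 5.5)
The plan is to run the standard Lieder/Halpern potential argument: obtain one inequality from co-coercivity of $\mathcal{R}_{\triangleright}$, rewrite the step $\boldsymbol{u}^{k+1}-\boldsymbol{u}^k$ with Lemma \ref{lemma-z}, and absorb the inexactness terms by Young's inequality. Throughout write $R_k:=\mathcal{R}_{\triangleright}\boldsymbol{u}^k$, $\boldsymbol{e}^k:=\widetilde{\boldsymbol{u}}^k-\mathcal{J}_{\mathcal{C}^*\triangleright\mathcal{A}}\boldsymbol{u}^k$ and $d_k:=\boldsymbol{u}^{k+1}-\boldsymbol{u}^k$, so that $\boldsymbol{u}^k-\widetilde{\boldsymbol{u}}^k=R_k-\boldsymbol{e}^k$.

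\textbf{Step 1: basic inequality.} By 1-co-coercivity of $\mathcal{R}_{\triangleright}$ (Lemma \ref{lemma-nonexpansive}(ii) applied to $\mathcal{C}^*\triangleright\mathcal{A}$),
\[
\|R_{k+1}-R_k\|^2\le\langle R_{k+1}-R_k,\,d_k\rangle .
\]
I will multiply this by $\rho/2$. In the inner product I substitute $d_k$ in two ways: Lemma \ref{lemma-z}(ii) in the term paired with $R_{k+1}$, and Lemma \ref{lemma-z}(i) in the term paired with $R_k$. This gives
\[
\langle R_{k+1},d_k\rangle=\tfrac{\beta_k}{1-\beta_k}\langle R_{k+1},\boldsymbol{u}^0-\boldsymbol{u}^{k+1}\rangle-\rho\langle R_{k+1},R_k-\boldsymbol{e}^k\rangle,
\]
\[
\langle R_k,d_k\rangle=\beta_k\langle R_k,\boldsymbol{u}^0-\boldsymbol{u}^k\rangle-\rho(1-\beta_k)\langle R_k,R_k-\boldsymbol{e}^k\rangle .
\]
Expanding $\|R_{k+1}-R_k\|^2$ and collecting terms, the cross terms $\langle R_{k+1},R_k\rangle$ should combine into a perfect square. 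The exact-case identity I expect is
\[
\tfrac{\rho}{2}\|R_{k+1}\|^2-\tfrac{\beta_k}{1-\beta_k}\langle R_{k+1},\boldsymbol{u}^0-\boldsymbol{u}^{k+1}\rangle+\tfrac{2-\rho}{2}\|R_{k+1}-R_k\|^2\le \tfrac{\rho(1-2\beta_k)}{2}\|R_k\|^2-\beta_k\langle R_k,\boldsymbol{u}^0-\boldsymbol{u}^k\rangle+\text{(error terms)}.
\]
I will verify the coefficients carefully, possibly using the equivalent form $\langle R_{k+1}-R_k,\,d_k\rangle-\|R_{k+1}-R_k\|^2\ge0$ with the multiplier chosen so that the $\|R_{k+1}\|^2$ and $\|R_k\|^2$ coefficients match \eqref{ineq-any-beta}.

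\textbf{Step 2: error terms.} The error terms are of the form $\rho\langle R_{k+1}-R_k,\boldsymbol{e}^k\rangle$ plus $\rho\beta_k\langle R_k,\boldsymbol{e}^k\rangle$. I keep the second as stated in \eqref{ineq-any-beta}. The first is bounded by Young's inequality,
\[
\rho\langle R_{k+1}-R_k,\boldsymbol{e}^k\rangle\le\tfrac{2-\rho}{2}\|R_{k+1}-R_k\|^2+\tfrac{\rho^2}{2(2-\rho)}\|\boldsymbol{e}^k\|^2,
\]
which cancels against the retained square and yields \eqref{ineq-any-beta}. This is the delicate part: the signs and constants must match exactly, which is also why $\rho<2$ is needed here.

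\textbf{Step 3: the potential $\mathcal{L}_k$.} With $\beta_k=1/(k+r)$ we have $\beta_k/(1-\beta_k)=1/(k+r-1)$. Multiplying \eqref{ineq-any-beta} by $(k+r-1)(k+r)$, the left side becomes exactly $\mathcal{L}_{k+1}$. On the right side, the inner-product term gives $-(k+r-1)\langle R_k,\boldsymbol{u}^0-\boldsymbol{u}^k\rangle$, and the quadratic term gives
\[
\tfrac{\rho}{2}(k+r-1)(k+r-2)\|R_k\|^2,
\]
using $(k+r)(1-2/(k+r))=k+r-2$. So the right side is $\mathcal{L}_k$ plus the error terms:
\[
\rho(k+r-1)\langle R_k,\boldsymbol{e}^k\rangle+\tfrac{\rho^2(k+r-1)(k+r)}{2(2-\rho)}\|\boldsymbol{e}^k\|^2 .
\]
Young's inequality gives $\rho(k+r-1)\langle R_k,\boldsymbol{e}^k\rangle\le\frac{\rho}{12}\|R_k\|^2+3\rho(k+r-1)^2\|\boldsymbol{e}^k\|^2$. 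Since $r\ge2$, $k+r\le2(k+r-1)$. Summing the two $\|\boldsymbol{e}^k\|^2$ coefficients then gives at most
\[
\tfrac{(k+r-1)^2\,[\,3\rho(2-\rho)+\rho^2\,]}{2-\rho}\le\tfrac{2\rho(3-\rho)(k+r-1)^2}{2-\rho},
\]
which is \eqref{ineq-suff-dec}. I expect the constant to come out like this with slack; if not, I will adjust the Young weights.

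\textbf{Part (ii), $\rho=2$ exact.} Here $\boldsymbol{e}^k=0$, so Step 1 gives the inequality directly without needing $2-\rho>0$: the square term has coefficient $0$ and no Young step is required. Multiplying by $(k+r-1)(k+r)$ then yields $\mathcal{L}_{k+1}\le\mathcal{L}_k$.

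The main obstacle is the algebra in Step 1: getting exactly the coefficient $\frac{2-\rho}{2}$ on $\|R_{k+1}-R_k\|^2$ after the two substitutions.
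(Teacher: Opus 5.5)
Your proposal is correct and follows the paper's proof step for step. The shared ingredients are: co-coercivity of $\mathcal{R}_{\triangleright}$; the two substitutions from Lemma~\ref{lemma-z}; Young's inequality with weight $\frac{2-\rho}{2}$ to absorb $\rho\langle R_{k+1}-R_k,\boldsymbol{e}^k\rangle$; multiplication by $(k+r-1)(k+r)$ with the same $\frac{\rho}{12}$/$3\rho$ split, which gives the constant $\frac{2\rho(3-\rho)}{2-\rho}$ exactly rather than with slack; and the same specialization at $\rho=2$.

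The only slip is the phrase ``multiply this by $\rho/2$''. The co-coercivity inequality is used with multiplier $1$: you split $\|R_{k+1}-R_k\|^2=\frac{\rho}{2}\|R_{k+1}-R_k\|^2+\frac{2-\rho}{2}\|R_{k+1}-R_k\|^2$ and expand only the first piece, which yields exactly the inequality you expected.
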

\begin{proof}
First, it follows from Lemma \ref{lemma-nonexpansive}(ii) that
\begin{equation*}
\big\|\mathcal{R}_{\triangleright}\boldsymbol{u}^{k+1} - \mathcal{R}_{\triangleright}\boldsymbol{u}^{k}\big\|^2
\leq \langle \mathcal{R}_{\triangleright}\boldsymbol{u}^{k+1} - \mathcal{R}_{\triangleright}\boldsymbol{u}^{k}, \,\boldsymbol{u}^{k+1} - \boldsymbol{u}^k \rangle,
\end{equation*}
which implies that
\begin{equation}\label{ineq-coercive}
\begin{aligned}
&\quad {\textstyle\frac{\rho}{2}}\big\|\mathcal{R}_{\triangleright}\boldsymbol{u}^{k+1}\big\|^2
+ {\textstyle\frac{\rho}{2}}\big\|\mathcal{R}_{\triangleright}\boldsymbol{u}^{k}\big\|^2
+ {\textstyle\frac{2-\rho}{2}}\big\|\mathcal{R}_{\triangleright}\boldsymbol{u}^{k+1}-\mathcal{R}_{\triangleright}\boldsymbol{u}^{k}\big\|^2\\
&\leq \langle \mathcal{R}_{\triangleright}\boldsymbol{u}^{k+1}, \,\boldsymbol{u}^{k+1} - \boldsymbol{u}^k
+ \rho \mathcal{R}_{\triangleright}\boldsymbol{u}^{k} \rangle
- \langle \mathcal{R}_{\triangleright}\boldsymbol{u}^{k}, \,\boldsymbol{u}^{k+1} - \boldsymbol{u}^k \rangle.
\end{aligned}
\end{equation}
Then, recalling two expressions for $\boldsymbol{u}^{k+1}-\boldsymbol{u}^k$ in Lemma \ref{lemma-z} and the identity $\mathcal{J}_{\mathcal{C}^*\triangleright\mathcal{A}}\boldsymbol{u}^k+\mathcal{R}_{\triangleright}\boldsymbol{u}^k=\boldsymbol{u}^k$, we have that
\begin{equation*}
\begin{aligned}
\langle \mathcal{R}_{\triangleright}\boldsymbol{u}^{k+1}, \,\boldsymbol{u}^{k+1} - \boldsymbol{u}^k
+ \rho \mathcal{R}_{\triangleright}\boldsymbol{u}^{k} \rangle
&= \langle \mathcal{R}_{\triangleright}\boldsymbol{u}^{k+1}, \,{\textstyle\frac{\beta_k}{1-\beta_k}}(\boldsymbol{u}^0 - \boldsymbol{u}^{k+1}) - \rho (\boldsymbol{u}^k-\widetilde{\boldsymbol{u}}^k) + \rho \mathcal{R}_{\triangleright}\boldsymbol{u}^{k}\rangle \\
&= {\textstyle\frac{\beta_k}{1-\beta_k}}\langle \mathcal{R}_{\triangleright}\boldsymbol{u}^{k+1}, \,\boldsymbol{u}^0 - \boldsymbol{u}^{k+1}\rangle + \rho\langle \mathcal{R}_{\triangleright}\boldsymbol{u}^{k+1}, \,\widetilde{\boldsymbol{u}}^k - \mathcal{J}_{\mathcal{C}^*\triangleright\mathcal{A}}\boldsymbol{u}^k \rangle,
\end{aligned}
\end{equation*}
and
\begin{equation*}
\begin{aligned}
&\langle \mathcal{R}_{\triangleright}\boldsymbol{u}^{k}, \,\boldsymbol{u}^{k+1} - \boldsymbol{u}^k \rangle
\;=\; \langle \mathcal{R}_{\triangleright}\boldsymbol{u}^{k}, \,\beta_k (\boldsymbol{u}^0 - \boldsymbol{u}^k)
- \rho(1-\beta_k)(\boldsymbol{u}^k-\widetilde{\boldsymbol{u}}^k) \rangle  \\
&= \beta_k\langle\mathcal{R}_{\triangleright}\boldsymbol{u}^{k}, \,\boldsymbol{u}^0-\boldsymbol{u}^k\rangle
- \rho(1-\beta_k)\big(\langle\mathcal{R}_{\triangleright}\boldsymbol{u}^{k},\,\boldsymbol{u}^k-\widetilde{\boldsymbol{u}}^k-\mathcal{R}_{\triangleright}\boldsymbol{u}^{k}\rangle
+ \|\mathcal{R}_{\triangleright}\boldsymbol{u}^k\|^2\big) \\
&= \beta_k\langle\mathcal{R}_{\triangleright}\boldsymbol{u}^{k}, \,\boldsymbol{u}^0-\boldsymbol{u}^k\rangle
+ \rho(1-\beta_k)\langle\mathcal{R}_{\triangleright}\boldsymbol{u}^{k},\,\widetilde{\boldsymbol{u}}^k
-\mathcal{J}_{\mathcal{C}^*\triangleright\mathcal{A}}\boldsymbol{u}^k \rangle
- \rho(1-\beta_k)\|\mathcal{R}_{\triangleright}\boldsymbol{u}^k\|^2.
\end{aligned}
\end{equation*}
Substituting the two expressions above into \eqref{ineq-coercive} yields
\begin{equation}\label{ineq-R}
\begin{aligned}
&\quad {\textstyle\frac{\rho}{2}}\big\|\mathcal{R}_{\triangleright}\boldsymbol{u}^{k+1}\big\|^2
+ {\textstyle\frac{\rho}{2}}\big\|\mathcal{R}_{\triangleright}\boldsymbol{u}^{k}\big\|^2
+ {\textstyle\frac{2-\rho}{2}}\big\|\mathcal{R}_{\triangleright}\boldsymbol{u}^{k+1}-\mathcal{R}_{\triangleright}\boldsymbol{u}^{k}\big\|^2 \\
&\leq {\textstyle\frac{\beta_k}{1-\beta_k}} \langle \mathcal{R}_{\triangleright}\boldsymbol{u}^{k+1}, \boldsymbol{u}^0 - \boldsymbol{u}^{k+1} \rangle - \beta_k \langle \mathcal{R}_{\triangleright}\boldsymbol{u}^{k}, \boldsymbol{u}^0 - \boldsymbol{u}^k \rangle
+ \rho(1-\beta_k) \big\| \mathcal{R}_{\triangleright}\boldsymbol{u}^{k} \big\|^2  \\
&\;\quad
+ \rho \langle \mathcal{R}_{\triangleright}\boldsymbol{u}^{k+1} - \mathcal{R}_{\triangleright}\boldsymbol{u}^{k}, \,\widetilde{\boldsymbol{u}}^k - \mathcal{J}_{\mathcal{C}^*\triangleright\mathcal{A}}\boldsymbol{u}^k \rangle
+ \rho\beta_k \langle \mathcal{R}_{\triangleright}\boldsymbol{u}^{k}, \,\widetilde{\boldsymbol{u}}^k
-\mathcal{J}_{\mathcal{C}^*\triangleright\mathcal{A}}\boldsymbol{u}^k \rangle.
\end{aligned}
\end{equation}
We now consider two cases.

\textit{Case (i)}. Suppose $\rho\in(0,2)$. It follows from the Cauchy-Schwarz inequality that
\begin{equation*}
\langle \mathcal{R}_{\triangleright}\boldsymbol{u}^{k+1} - \mathcal{R}_{\triangleright}\boldsymbol{u}^{k}, \,\widetilde{\boldsymbol{u}}^k - \mathcal{J}_{\mathcal{C}^*\triangleright\mathcal{A}}\boldsymbol{u}^k \rangle
\leq {\textstyle\frac{2-\rho}{2\rho}}\big\|\mathcal{R}_{\triangleright}\boldsymbol{u}^{k+1}-\mathcal{R}_{\triangleright}\boldsymbol{u}^{k} \big\|^2 + {\textstyle\frac{\rho}{2(2-\rho)}}\big\|\widetilde{\boldsymbol{u}}^k - \mathcal{J}_{\mathcal{C}^*\triangleright\mathcal{A}}\boldsymbol{u}^k \big\|^2,
\end{equation*}
which, together with \eqref{ineq-R}, implies that
\begin{equation*}
\begin{aligned}
&\quad {\textstyle\frac{\rho}{2}}\big\|\mathcal{R}_{\triangleright}\boldsymbol{u}^{k+1}\big\|^2
+ {\textstyle\frac{\rho}{2}}\big\|\mathcal{R}_{\triangleright}\boldsymbol{u}^{k}\big\|^2
+ {\textstyle\frac{2-\rho}{2}}\big\|\mathcal{R}_{\triangleright}\boldsymbol{u}^{k+1}-\mathcal{R}_{\triangleright}\boldsymbol{u}^{k}\big\|^2 \\
&\leq {\textstyle\frac{\beta_k}{1-\beta_k}}\langle\mathcal{R}_{\triangleright}\boldsymbol{u}^{k+1}, \,\boldsymbol{u}^0-\boldsymbol{u}^{k+1} \rangle - \beta_k \langle \mathcal{R}_{\triangleright}\boldsymbol{u}^{k}, \,\boldsymbol{u}^0 - \boldsymbol{u}^k \rangle
+ \rho(1-\beta_k)\big\|\mathcal{R}_{\triangleright}\boldsymbol{u}^{k}\big\|^2 \\
&\quad + {\textstyle\frac{2-\rho}{2}}\big\|\mathcal{R}_{\triangleright}\boldsymbol{u}^{k+1}-\mathcal{R}_{\triangleright}\boldsymbol{u}^{k} \big\|^2
+ {\textstyle\frac{\rho^2}{2(2-\rho)}}\big\| \widetilde{\boldsymbol{u}}^k - \mathcal{J}_{\mathcal{C}^*\triangleright\mathcal{A}}\boldsymbol{u}^k \big\|^2
+ \rho\beta_k \langle \mathcal{R}_{\triangleright}\boldsymbol{u}^{k}, \,\widetilde{\boldsymbol{u}}^k - \mathcal{J}_{\mathcal{C}^*\triangleright\mathcal{A}}\boldsymbol{u}^k \rangle.
\end{aligned}
\end{equation*}
Thus, by rearranging the terms in this inequality, we obtain \eqref{ineq-any-beta}.

Furthermore, by substituting $\beta_k=\frac{1}{ k+r}$ into \eqref{ineq-any-beta}, we have that
\begin{equation*}
\begin{aligned}
&\quad {\textstyle\frac{\rho}{2}}\big\|\mathcal{R}_{\triangleright}\boldsymbol{u}^{k+1}\big\|^2
- {\textstyle\frac{1}{ k+r-1}}\langle\mathcal{R}_{\triangleright}\boldsymbol{u}^{k+1}, \,\boldsymbol{u}^0-\boldsymbol{u}^{k+1} \rangle
\\
&\leq {\textstyle\frac{\rho ( k+r-2)}{{2}( k+r)}}\big\|\mathcal{R}_{\triangleright}\boldsymbol{u}^{k}\big\|^2
- {\textstyle\frac{1}{ k+r}}\langle \mathcal{R}_{\triangleright}\boldsymbol{u}^{k}, \,\boldsymbol{u}^0-\boldsymbol{u}^k \rangle
\\
&\quad
+ {\textstyle\frac{\rho}{ k+r}}\langle\mathcal{R}_{\triangleright}\boldsymbol{u}^{k}, \,\widetilde{\boldsymbol{u}}^k-\mathcal{J}_{\mathcal{C}^*\triangleright\mathcal{A}}\boldsymbol{u}^k \rangle
+ {\textstyle\frac{\rho^2}{2(2-\rho)}}\big\|\widetilde{\boldsymbol{u}}^k-\mathcal{J}_{\mathcal{C}^*\triangleright\mathcal{A}}\boldsymbol{u}^k \big\|^2.
\end{aligned}
\end{equation*}
Multiplying both sides by $( k+r-1)( k+r)$, recalling the definition of $\mathcal{L}_k$, using $k+r\leq2(k+r-1)$ for $k\geq0$ and $r\geq2$, and applying the Cauchy-Schwarz inequality, we obtain that
\begin{equation*}
\begin{aligned}
\mathcal{L}_{k+1}
&\leq \mathcal{L}_k + \rho( k+r-1) \langle \mathcal{R}_{\triangleright}\boldsymbol{u}^{k}, \,\widetilde{\boldsymbol{u}}^k - \mathcal{J}_{\mathcal{C}^*\triangleright\mathcal{A}}\boldsymbol{u}^k \rangle
+ {\textstyle\frac{\rho^2( k+r-1)( k+r)}{2(2-\rho)}} \big\| \widetilde{\boldsymbol{u}}^k - \mathcal{J}_{\mathcal{C}^*\triangleright\mathcal{A}}\boldsymbol{u}^k \big\|^2 \\
&\leq \mathcal{L}_k + \rho( k+r-1) \langle \mathcal{R}_{\triangleright}\boldsymbol{u}^{k}, \,\widetilde{\boldsymbol{u}}^k - \mathcal{J}_{\mathcal{C}^*\triangleright\mathcal{A}}\boldsymbol{u}^k \rangle + {\textstyle\frac{\rho^2( k+r-1)^2}{2-\rho}} \big\| \widetilde{\boldsymbol{u}}^k - \mathcal{J}_{\mathcal{C}^*\triangleright\mathcal{A}}\boldsymbol{u}^k \big\|^2 \\
&\leq \mathcal{L}_k + {\textstyle\frac{\rho( k+r-1)}{2}\left( \frac{1}{6( k+r-1)} \big\| \mathcal{R}_{\triangleright}\boldsymbol{u}^{k}\big\|^2 + 6( k+r-1) \big\| \widetilde{\boldsymbol{u}}^k - \mathcal{J}_{\mathcal{C}^*\triangleright\mathcal{A}}\boldsymbol{u}^k \big\|^2 \right)}
\\
&\quad+ {\textstyle\frac{\rho^2( k+r-1)^2}{2-\rho}}\big\| \widetilde{\boldsymbol{u}}^k - \mathcal{J}_{\mathcal{C}^*\triangleright\mathcal{A}}\boldsymbol{u}^k \big\|^2 \\
&= \mathcal{L}_k + {\textstyle\frac{\rho}{12}} \big\| \mathcal{R}_{\triangleright}\boldsymbol{u}^{k}\big\|^2
+ {\textstyle\frac{2\rho(3-\rho)( k+r-1)^2}{2-\rho}} \big\| \widetilde{\boldsymbol{u}}^k - \mathcal{J}_{\mathcal{C}^*\triangleright\mathcal{A}}\boldsymbol{u}^k \big\|^2,
\end{aligned}
\end{equation*}
which completes the proof of statement (i).

{\textit{Case (ii)}. Set $\rho=2$ and $\widetilde{\boldsymbol{u}}^k=\mathcal{J}_{\mathcal{C}^*\triangleright\mathcal{A}}\boldsymbol{u}^k$ in \eqref{ineq-R}. Substituting $\beta_k=1/(k+r)$ and multiplying by $(k+r-1)(k+r)$ gives \eqref{ineq-suff-dec-rho2} directly from \eqref{def-Lk}.}
\hfill
\end{proof}

Based on the above descent property of the auxiliary sequence $\{\mathcal{L}_k\}_{k=0}^{\infty}$, the convergence rate of the \iHPGPPA is established in the following theorem. Our analysis follows a similar line of reasoning to that in \cite{d2020halpern, ltz2024inexact} for inexact Halpern iterations, but is more refined in that it explicitly characterizes the relationship between the relaxation parameter $\rho$ and the inexactness tolerance sequence $\{\varepsilon_k\}_{k=0}^{\infty}$. Furthermore, our analysis generalizes the selection of the anchoring parameter sequence $\{\beta_k\}_{k=0}^{\infty}$ to enhance the flexibility of the algorithm.

\begin{theorem}\textup{(\textbf{Convergence rate})}\label{thm-converge-rate}\label{thm-preconditioned-iAccGPPA}
Suppose that $\mathcal{M}$ is an admissible preconditioner and the solution set $\mathcal{S}$ is nonempty. Let $\mathcal{M}=\mathcal{C}\mathcal{C}^*$ be a decomposition of $\mathcal{M}$, $\{\boldsymbol{x}^{k}\}$ and $\{\boldsymbol{y}^{k}\}$ be generated by \eqref{Hal-PPP-inex} with fixed $\rho\in(0,2]$, fixed $r\geq2$, and $\beta_{k}=1/(k+r)$. Then, for any $k\geq1$, the following statements hold.
\begin{itemize}
        \item[{(i)}] {If $\rho\in(0,2)$, then, for any $\boldsymbol{x}^*\in\mathcal{S}$,}
        \begin{equation}\label{ineq-rate-r02}
             \big\| \boldsymbol{x}^k - {\mathcal{J}}_{\mathcal{A}}^{\mathcal{M}}\boldsymbol{x}^k \big\|_{\mathcal{M}}^2 \leq \frac{\Big[\frac{6(r-2)(r-1)+1}{2}+\frac{12}{\rho^2}\Big]\big\|\boldsymbol{x}^0 - \boldsymbol{x}^* \big\|_{\mathcal{M}}^2 + \frac{12(3-\rho)}{(2-\rho)}\sum_{i=0}^{k-1} (i+r-1)^2 \|\mathcal{C}^*\|_2^2\,\varepsilon_i^2}{(k+r-2)(k+r-1)}.
        \end{equation}
        {Consequently, if $\sum_{k=0}^{\infty}(k+r-1)^2\varepsilon_k^2<\infty$, then $\|\boldsymbol{x}^k-{\mathcal{J}}_{\mathcal{A}}^{\mathcal{M}}\boldsymbol{x}^k\|_{\mathcal{M}}^2=\mathcal{O}(1/k^2)$.}
        \item[{(ii)}] {If $\rho=2$, then, for any $\boldsymbol{x}^*\in\mathcal{S}$,}
        \begin{equation}\label{ineq-rate-r2}
        {\big\|\boldsymbol{x}^k-\mathcal{J}_{\mathcal{A}}^{\mathcal{M}}\boldsymbol{x}^k\big\|_{\mathcal{M}}
        \leq\frac{(r-1)\|\boldsymbol{x}^0-\boldsymbol{x}^*\|_{\mathcal{M}}+2\|\mathcal{C}^*\|_2\,\sum_{i=0}^{k-1}(i+r-1)\varepsilon_i}{k+r-1}.}
        \end{equation}
        {Consequently, if $\sum_{k=0}^{\infty}(k+r-1)\varepsilon_k<\infty$, then
        $\|\boldsymbol{x}^k-\mathcal{J}_{\mathcal{A}}^{\mathcal{M}}\boldsymbol{x}^k\|_{\mathcal{M}}^2=\mathcal{O}(1/k^2)$.}
\end{itemize}
\end{theorem}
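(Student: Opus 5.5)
We work entirely in the reduced space. The first step is a translation. Since $\mathcal{C}^*\mathcal{J}^{\mathcal{M}}_{\mathcal{A}}\boldsymbol{x}^k=\mathcal{J}_{\mathcal{C}^*\triangleright\mathcal{A}}\boldsymbol{u}^k$, we have
\[
\|\boldsymbol{x}^k-\mathcal{J}^{\mathcal{M}}_{\mathcal{A}}\boldsymbol{x}^k\|_{\mathcal{M}}=\|\mathcal{C}^*(\boldsymbol{x}^k-\mathcal{J}^{\mathcal{M}}_{\mathcal{A}}\boldsymbol{x}^k)\|=\|\mathcal{R}_{\triangleright}\boldsymbol{u}^k\|.
\]
Likewise, for $\boldsymbol{x}^*\in\mathcal{S}$ and $\boldsymbol{u}^*:=\mathcal{C}^*\boldsymbol{x}^*\in\operatorname{Fix}(\mathcal{J}_{\mathcal{C}^*\triangleright\mathcal{A}})$, we have $\|\boldsymbol{x}^0-\boldsymbol{x}^*\|_{\mathcal{M}}=\|\boldsymbol{u}^0-\boldsymbol{u}^*\|=:D$. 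Both statements then become bounds on $\|\mathcal{R}_{\triangleright}\boldsymbol{u}^k\|$ for the sequence \eqref{Hal-PPP-inex-u}, whose errors satisfy $\|\widetilde{\boldsymbol{u}}^k-\mathcal{J}_{\mathcal{C}^*\triangleright\mathcal{A}}\boldsymbol{u}^k\|\le\|\mathcal{C}^*\|_2\varepsilon_k$.

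The common tool is a lower bound on $\mathcal{L}_k$ from \eqref{def-Lk}. Split
\[
\langle\mathcal{R}_{\triangleright}\boldsymbol{u}^k,\boldsymbol{u}^0-\boldsymbol{u}^k\rangle=\langle\mathcal{R}_{\triangleright}\boldsymbol{u}^k,\boldsymbol{u}^0-\boldsymbol{u}^*\rangle-\langle\mathcal{R}_{\triangleright}\boldsymbol{u}^k,\boldsymbol{u}^k-\boldsymbol{u}^*\rangle .
\]
Since $\mathcal{R}_{\triangleright}\boldsymbol{u}^*=0$, 1-co-coercivity (Lemma \ref{lemma-nonexpansive}(ii)) gives $\langle\mathcal{R}_{\triangleright}\boldsymbol{u}^k,\boldsymbol{u}^k-\boldsymbol{u}^*\rangle\ge\|\mathcal{R}_{\triangleright}\boldsymbol{u}^k\|^2$. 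Hence
\[
\mathcal{L}_k\ge\Big[\tfrac{\rho(k+r-2)(k+r-1)}{2}+(k+r-1)\Big]\|\mathcal{R}_{\triangleright}\boldsymbol{u}^k\|^2-(k+r-1)D\|\mathcal{R}_{\triangleright}\boldsymbol{u}^k\|.
\]
For the other end, the inner-product term in $\mathcal{L}_0$ vanishes and $\|\mathcal{R}_{\triangleright}\boldsymbol{u}^0\|\le D$ (because $\mathcal{R}_{\triangleright}$ is nonexpansive and $\mathcal{R}_{\triangleright}\boldsymbol{u}^*=0$), so $\mathcal{L}_0\le\frac{\rho(r-2)(r-1)}{2}D^2$.

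\textbf{Case $\rho=2$.} Here Lemma \ref{lemma-suff-dec}(ii) only covers exact steps, so we use a perturbation argument as in Corollary \ref{cor-vanishing-errors}. Let $\boldsymbol{v}^0=\boldsymbol{u}^0$ and $\boldsymbol{v}^{k+1}=\beta_k\boldsymbol{u}^0+(1-\beta_k)\mathcal{F}_2\boldsymbol{v}^k$. For this exact sequence, $\mathcal{L}_k\le\mathcal{L}_0$ together with the lower bound at $\rho=2$ gives, with $t=(k+r-1)\|\mathcal{R}_{\triangleright}\boldsymbol{v}^k\|$,
\[
t^2-tD\le(r-2)(r-1)D^2 .
\]
Since $1+4(r-2)(r-1)=(2r-3)^2$, this yields $t\le(r-1)D$. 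Next, nonexpansivity of $\mathcal{F}_2$ gives $\|\boldsymbol{u}^{k+1}-\boldsymbol{v}^{k+1}\|\le(1-\beta_k)(\|\boldsymbol{u}^k-\boldsymbol{v}^k\|+2\|\mathcal{C}^*\|_2\varepsilon_k)$. With $\prod_{j=i}^{k-1}(1-\beta_j)=\frac{i+r-1}{k+r-1}$, this unrolls to
\[
\|\boldsymbol{u}^k-\boldsymbol{v}^k\|\le\frac{2\|\mathcal{C}^*\|_2\sum_{i<k}(i+r-1)\varepsilon_i}{k+r-1}.
\]
Finally, $\mathcal{R}_{\triangleright}$ is nonexpansive, so $\|\mathcal{R}_{\triangleright}\boldsymbol{u}^k\|\le\|\mathcal{R}_{\triangleright}\boldsymbol{v}^k\|+\|\boldsymbol{u}^k-\boldsymbol{v}^k\|$, which is exactly \eqref{ineq-rate-r2}.

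\textbf{Case $\rho\in(0,2)$.} Telescope \eqref{ineq-suff-dec} from $0$ to $k-1$:
\[
\mathcal{L}_k\le\mathcal{L}_0+\tfrac{\rho}{12}\sum_{i<k}\|\mathcal{R}_{\triangleright}\boldsymbol{u}^i\|^2+\tfrac{2\rho(3-\rho)}{2-\rho}\,\|\mathcal{C}^*\|_2^2\sum_{i<k}(i+r-1)^2\varepsilon_i^2 .
\]
In the lower bound on $\mathcal{L}_k$, apply Young's inequality to absorb the linear term: $(k+r-1)D\|\mathcal{R}_{\triangleright}\boldsymbol{u}^k\|\le(k+r-1)\|\mathcal{R}_{\triangleright}\boldsymbol{u}^k\|^2+\tfrac{k+r-1}{4}D^2$. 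The extra $(k+r-1)\|\mathcal{R}_{\triangleright}\boldsymbol{u}^k\|^2$ then cancels, leaving $\frac{\rho(k+r-2)(k+r-1)}{2}\|\mathcal{R}_{\triangleright}\boldsymbol{u}^k\|^2$ on the left, minus a $D^2$ term. If the weights are chosen more cleverly (with $\rho$-dependent constants), that $D^2$ term becomes the $12/\rho^2$ contribution.

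\textbf{Main obstacle.} The real difficulty is the accumulated term $\frac{\rho}{12}\sum_{i<k}\|\mathcal{R}_{\triangleright}\boldsymbol{u}^i\|^2$, since it contains the very quantities being bounded. I would handle it by strong induction on $k$, using the claimed bound \eqref{ineq-rate-r02} for all $i\ge1$ and the bound $\|\mathcal{R}_{\triangleright}\boldsymbol{u}^0\|\le D$ at $i=0$.
\begin{itemize}
\item For $i\ge1$, the factors $\frac{1}{(i+r-2)(i+r-1)}$ telescope, and their sum is at most $1$.
\item The small coefficient $\rho/12$, measured against $\rho/2$ on the left, means the accumulated sum adds back only about $\frac16$ of the target bound. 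That fraction should be absorbable, and the slack of $6$ appears to be exactly what produces the constants $\frac{6(r-2)(r-1)+1}{2}$ and $12(3-\rho)/(2-\rho)$.
\end{itemize}
If the induction does not close cleanly, the fallback is to bound $\sum_{i<k}\|\mathcal{R}_{\triangleright}\boldsymbol{u}^i\|^2$ crudely using Proposition \ref{propo-bound}. I expect tuning the Young parameters so that the induction closes with exactly the stated constants to be the delicate part.
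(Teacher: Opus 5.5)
Your $\rho=2$ argument is correct and is the paper's argument. It uses the exact comparison sequence $\boldsymbol{v}^k$, the inequality $t^2-tD\le(r-2)(r-1)D^2$ giving $t\le(r-1)D$, and the perturbation bound unrolled with $\prod_{j=i}^{k-1}(1-\beta_j)=(i+r-1)/(k+r-1)$. For $\rho\in(0,2)$, your outline also follows the paper: telescoping \eqref{ineq-suff-dec}, $\mathcal{L}_0\le\frac{\rho(r-2)(r-1)}{2}D^2$, and strong induction using $\|\mathcal{R}_{\triangleright}\boldsymbol{u}^0\|\le D$ and $\sum_{j=1}^{k-1}\frac{1}{(j+r-2)(j+r-1)}\le1$.

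The gap is the one concrete Young step you wrote: it does not give the rate. Absorbing $(k+r-1)D\|\mathcal{R}_{\triangleright}\boldsymbol{u}^k\|$ into the co-coercivity gain $(k+r-1)\|\mathcal{R}_{\triangleright}\boldsymbol{u}^k\|^2$ leaves $\frac{k+r-1}{4}D^2$. After dividing by $\frac{\rho}{2}(k+r-2)(k+r-1)$ this is $\frac{D^2}{2\rho(k+r-2)}$, which is only $\mathcal{O}(1/k)$ for the squared residual. Choosing $\rho$-dependent constants cannot fix this. Since $\min_{s\ge0}(k+r-1)(s^2-Ds)=-(k+r-1)D^2/4$, absorbing the linear term into any $\Theta(k)$ coefficient leaves a $\Theta(k)D^2$ remainder.

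The missing idea is to absorb the linear term into a constant fraction of the $\Theta(k^2)$ coefficient. The co-coercivity gain can simply be discarded, as the paper does via $\langle\mathcal{R}_{\triangleright}\boldsymbol{u}^k,\boldsymbol{u}^*-\boldsymbol{u}^k\rangle\le0$. The right inequality is
\[
(k+r-1)D\|\mathcal{R}_{\triangleright}\boldsymbol{u}^k\|\le\tfrac{\rho(k+r-2)(k+r-1)}{4}\|\mathcal{R}_{\triangleright}\boldsymbol{u}^k\|^2+\tfrac{(k+r-1)D^2}{\rho(k+r-2)},
\]
and its remainder is at most $2D^2/\rho$ because $k+r-2\ge1$. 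This halves the left-hand coefficient to $\frac{\rho}{4}(k+r-2)(k+r-1)$. Two consequences follow:
\begin{itemize}
\item The accumulated term $\frac{\rho}{12}\sum_{i<k}\|\mathcal{R}_{\triangleright}\boldsymbol{u}^i\|^2$ feeds back a fraction $\frac13$ of the target, not $\frac16$.
\item The remainder contributes $8D^2/\rho^2$ to the numerator.
\end{itemize}
With $b_k:=\sum_{i<k}(i+r-1)^2\|\mathcal{C}^*\|_2^2\varepsilon_i^2$, the induction then closes with the nondecreasing quantity
\[
C_k=\tfrac{6}{\rho}\mathcal{L}_0+\tfrac{12(3-\rho)}{2-\rho}b_k+\big(\tfrac12+\tfrac{12}{\rho^2}\big)D^2.
\]
It is chosen so that all terms other than the $i\ge1$ sum equal exactly $\frac23C_k$. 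The $\frac12D^2$ comes from the $i=0$ term $\frac13\|\mathcal{R}_{\triangleright}\boldsymbol{u}^0\|^2\le\frac13D^2$, and $\frac{6}{\rho}\mathcal{L}_0\le3(r-2)(r-1)D^2$ then yields \eqref{ineq-rate-r02}.

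Your fallback via Proposition \ref{propo-bound} would not rescue the rate. Boundedness only gives $\sum_{i<k}\|\mathcal{R}_{\triangleright}\boldsymbol{u}^i\|^2=\mathcal{O}(k)$, hence again $\mathcal{O}(1/k)$. It also requires $\sum_k\varepsilon_k<\infty$, which the explicit bound \eqref{ineq-rate-r02} does not assume.
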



\begin{proof}
{For any $\boldsymbol{x}^*\in\mathcal{S}$, Proposition \ref{parallel-comp} gives $\boldsymbol{u}^*=\mathcal{C}^*\boldsymbol{x}^*\in\operatorname{zero}(\mathcal{C}^*\triangleright\mathcal{A})$. By \eqref{auxi-seq},}
\begin{align*}
{\big\|\mathcal{R}_{\triangleright}\boldsymbol{u}^k\big\|}
&{=\big\|\mathcal{C}^*(\boldsymbol{x}^k-{\mathcal{J}}_{\mathcal{A}}^{\mathcal{M}}\boldsymbol{x}^k)\big\|
=\big\|\boldsymbol{x}^k-{\mathcal{J}}_{\mathcal{A}}^{\mathcal{M}}\boldsymbol{x}^k\big\|_{\mathcal{M}},}\\
{D:=\|\boldsymbol{u}^0-\boldsymbol{u}^*\|}
&{=\|\mathcal{C}^*(\boldsymbol{x}^0-\boldsymbol{x}^*)\|
=\|\boldsymbol{x}^0-\boldsymbol{x}^*\|_{\mathcal{M}}.}
\end{align*}

\textit{Statement (i)}.
From Lemma \ref{lemma-suff-dec}(i), we rewrite the inequality \eqref{ineq-suff-dec} as
\begin{equation*}
\mathcal{L}_{k+1} - \mathcal{L}_k \leq {\textstyle\frac{\rho}{12}} \big\| \mathcal{R}_{\triangleright}\boldsymbol{u}^{k} \big\|^2 + {\textstyle\frac{2\rho(3-\rho)}{(2-\rho)}} ( k+r-1)^2 \big\|\widetilde{\boldsymbol{u}}^k - \mathcal{J}_{\mathcal{C}^*\triangleright\mathcal{A}}\boldsymbol{u}^k \big\|^2.
\end{equation*}
By the inexactness criterion {\eqref{Hal-PPP-inex-u}}, summing over $0 \leq i \leq k-1$ ($k \geq 1$) yields that
\begin{equation}\label{ineq-diff-L}
\mathcal{L}_k = \mathcal{L}_0 + \sum_{i=0}^{k-1} (\mathcal{L}_{i+1} - \mathcal{L}_i)
\leq \mathcal{L}_0 +\frac{\rho}{12} \sum_{i=0}^{k-1} \big\| \mathcal{R}_{\triangleright}\boldsymbol{u}^i \big\|^2 + \frac{2\rho(3-\rho)}{(2-\rho)} \sum_{i=0}^{k-1} (i+r-1)^2 \|\mathcal{C}^*\|_2^2\,\varepsilon_i^2.
\end{equation}
Substituting the definition of $\mathcal{L}_k$ from \eqref{def-Lk} into \eqref{ineq-diff-L}, we obtain
\begin{equation}\label{ineq-thm-rate-RBound}
\begin{aligned}
    &\frac{\rho ( k+r-2)( k+r-1)}{2} \big\| \mathcal{R}_{\triangleright}\boldsymbol{u}^{k} \big\|^2 \\
    \leq\, &\mathcal{L}_0 + \frac{\rho}{12} \sum_{i=0}^{k-1} \big\| \mathcal{R}_{\triangleright}\boldsymbol{u}^i \big\|^2 + \frac{2\rho(3-\rho)}{(2-\rho)} \sum_{i=0}^{k-1} (i+r-1)^2 \|\mathcal{C}^*\|_2^2\,\varepsilon_i^2 + ( k+r-1) \langle \mathcal{R}_{\triangleright}\boldsymbol{u}^{k}, \boldsymbol{u}^0 - \boldsymbol{u}^k \rangle.
\end{aligned}
\end{equation}
To bound the inner product term $\langle \mathcal{R}_{\triangleright}\boldsymbol{u}^k, \boldsymbol{u}^0 - \boldsymbol{u}^k \rangle$, we use the 1-co-coercivity of $\mathcal{R}_{\triangleright}$ and the fact that $\mathcal{R}_{\triangleright}\boldsymbol{u}^* = 0$. For any $\boldsymbol{u}^* \in \mathcal{C}^*\mathcal{S}$, we have
\begin{equation*}
\langle \mathcal{R}_{\triangleright}\boldsymbol{u}^k - \mathcal{R}_{\triangleright}\boldsymbol{u}^*, \boldsymbol{u}^k - \boldsymbol{u}^* \rangle \geq \|\mathcal{R}_{\triangleright}\boldsymbol{u}^k - \mathcal{R}_{\triangleright}\boldsymbol{u}^*\|^2 \geq 0,
\end{equation*}
which {implies} that
$
\langle \mathcal{R}_{\triangleright}\boldsymbol{u}^k, \boldsymbol{u}^* - \boldsymbol{u}^k \rangle \leq 0.
$
By the Cauchy-Schwarz inequality, we obtain
\begin{equation}\label{bound-R-x0}
\langle \mathcal{R}_{\triangleright}\boldsymbol{u}^{k}, \boldsymbol{u}^0 - \boldsymbol{u}^k \rangle = \langle \mathcal{R}_{\triangleright}\boldsymbol{u}^{k}, \boldsymbol{u}^0 - \boldsymbol{u}^* \rangle + \langle \mathcal{R}_{\triangleright}\boldsymbol{u}^{k}, \boldsymbol{u}^* - \boldsymbol{u}^k \rangle
\leq \langle \mathcal{R}_{\triangleright}\boldsymbol{u}^{k}, \boldsymbol{u}^0 - \boldsymbol{u}^* \rangle
\leq  D\big\| \mathcal{R}_{\triangleright}\boldsymbol{u}^{k} \big\|.
\end{equation}
Substituting \eqref{bound-R-x0} into \eqref{ineq-thm-rate-RBound} and dividing both sides by $\frac{\rho ( k+r-2)( k+r-1)}{2}$, we have
\begin{equation}\label{ineq-Rk}
\begin{aligned}
\big\| \mathcal{R}_{\triangleright}\boldsymbol{u}^{k} \big\|^2
\leq \,&\frac{2\mathcal{L}_0}{\rho( k+r-2)( k+r-1)} +\frac{1}{6( k+r-2)( k+r-1)} \sum_{i=0}^{k-1} \big\| \mathcal{R}_{\triangleright}\boldsymbol{u}^i \big\|^2 \\
&+ \frac{4(3-\rho)}{(2-\rho)( k+r-2)( k+r-1)} \sum_{i=0}^{k-1} (i+r-1)^2 \|\mathcal{C}^*\|_2^2\,\varepsilon_i^2 + \frac{2D}{\rho ( k+r-2)} \big\| \mathcal{R}_{\triangleright}\boldsymbol{u}^{k} \big\|.
\end{aligned}
\end{equation}
Applying Young's inequality to the last term gives
\begin{equation*}
\frac{2D}{\rho ( k+r-2)} {\big\| \mathcal{R}_{\triangleright}\boldsymbol{u}^{k} \big\|} \leq \frac{1}{2} \big\| \mathcal{R}_{\triangleright}\boldsymbol{u}^{k} \big\|^2 + \frac{1}{2} \left( \frac{2D}{\rho ( k+r-2)} \right)^2 = \frac{1}{2} \big\| \mathcal{R}_{\triangleright}\boldsymbol{u}^{k} \big\|^2 + \frac{2D^2}{\rho^2 ( k+r-2)^2}.
\end{equation*}
Substituting the above inequality into \eqref{ineq-Rk} and rearranging the terms, we obtain
\begin{equation}\label{ineq-ak}
\begin{aligned}
\big\| \mathcal{R}_{\triangleright}\boldsymbol{u}^{k} \big\|^2
\leq\,
&\frac{1}{( k+r-2)( k+r-1)}
\left(
\begin{array}{l}\frac{4\mathcal{L}_0}{\rho}
+\frac{1}{3} \sum_{i=0}^{k-1} \big\| \mathcal{R}_{\triangleright}\boldsymbol{u}^{i} \big\|^2
\\[3pt]
+ \frac{8(3-\rho)}{(2-\rho)} \sum_{i=0}^{k-1} (i+r-1)^2 \|\mathcal{C}^*\|_2^2\,\varepsilon_i^2
+ \frac{4D^2(k+r-1)}{\rho^2 ( k+r-2)}
\end{array}\right).
\end{aligned}
\end{equation}
For notational simplicity, we define
\begin{equation}\label{denote-b-C}
\begin{aligned}
b_k := {\textstyle\sum_{i=0}^{k-1}} (i+r-1)^2 \|\mathcal{C}^*\|_2^2\,\varepsilon_i^2,\quad \text{and} \quad
C_k :={\textstyle\frac{6}{\rho}}\mathcal{L}_0  + {\textstyle\frac{12(3-\rho)}{(2-\rho)}} b_k + {\textstyle\frac{\rho^2+24}{2\rho^2}} D^2.
\end{aligned}
\end{equation}
The sequence $\{C_k\}_{k=1}^{\infty}$ is non-decreasing since $\{b_k\}_{k=1}^{\infty}$ is non-decreasing and $\rho \in (0, 2)$.

We now prove by mathematical induction that for all $k \geq 1$,
\begin{equation}\label{ineq-rate-r02-k0}
\|\mathcal{R}_{\triangleright}\boldsymbol{u}^k\|^2 \leq \frac{C_k}{( k+r-2)( k+r-1)}.
\end{equation}
For $k=1$, from \eqref{ineq-ak}, we have that
\begin{equation*}%
\begin{aligned}
\big\| \mathcal{R}_{\triangleright}\boldsymbol{u}^{1} \big\|^2
&\leq {\frac{1}{(r-1)r}} \left[\frac{4}{\rho}\mathcal{L}_0 + \frac{1}{3} \big\| \mathcal{R}_{\triangleright}\boldsymbol{u}^{0} \big\|^2 + \frac{8(3-\rho)}{(2-\rho)} b_1\right] + \frac{4D^2}{\rho^2(r-1)^2}\\
&\leq {\frac{1}{(r-1)r}} \left[\frac{4}{\rho}\mathcal{L}_0 + \frac{1}{3} \big\| \mathcal{R}_{\triangleright}\boldsymbol{u}^{0} \big\|^2 + \frac{8(3-\rho)}{(2-\rho)} b_1 + \frac{8D^2}{\rho^2}\right]\\
&\leq {\frac{1}{(r-1)r}} \left[\frac{4}{\rho}\mathcal{L}_0 + \frac{8(3-\rho)}{(2-\rho)} b_1 + \left(\frac{1}{3}+ \frac{8}{\rho^2}\right)D^2\right]\;
\leq\; \frac{C_1}{(r-1)r},
\end{aligned}
\end{equation*}
where the second inequality follows from $\frac{r}{r-1}\leq2$ for all $r\geq2$, the third inequality follows from $\|\mathcal{R}_{\triangleright}\boldsymbol{u}^0\|^2  =\|\mathcal{R}_{\triangleright}\boldsymbol{u}^0-\mathcal{R}_{\triangleright}\boldsymbol{u}^*\|^2 \leq \|\boldsymbol{u}^0 - \boldsymbol{u}^*\|^2 = D^2$ and the last inequality follows from the definition of $C_1$ in \eqref{denote-b-C}.
Assume \eqref{ineq-rate-r02-k0} holds for $j=1,\dots,k-1$. Substituting the induction hypothesis into \eqref{ineq-ak}, we derive that
\begin{align*}
\big\| \mathcal{R}_{\triangleright}\boldsymbol{u}^{k} \big\|^2
&\leq \frac{\frac{4}{\rho}\mathcal{L}_0 + \sum_{j=1}^{k-1} \frac{C_j}{3(j+r-2)(j+r-1)} +\frac{D^2}{3} + \frac{8(3-\rho)}{(2-\rho)} b_k + \frac{8D^2}{\rho^2}}{( k+r-2)( k+r-1)}.
\end{align*}
Since $C_j \leq C_k$ and $\sum_{j=1}^{k-1}\frac{1}{(j+r-2)(j+r-1)} = \frac{1}{r-1} - \frac{1}{k+r-2} = \frac{k-1}{(r-1)(k+r-2)} \leq 1$ for $k \geq 1$ and $r \geq 2$, we simplify
\begin{align*}
\big\| \mathcal{R}_{\triangleright}\boldsymbol{u}^{k} \big\|^2
&\leq \frac{\frac{4}{\rho}\mathcal{L}_0 + \frac{C_k}{3} +\frac{D^2}{3} + \frac{8(3-\rho)}{(2-\rho)} b_k + \frac{8D^2}{\rho^2}}{( k+r-2)( k+r-1)} \\
&= \frac{1}{( k+r-2)( k+r-1)} \left( \frac{C_k}{3} +\frac{2C_k}{3}\right)
\;=\; \frac{C_k}{( k+r-2)( k+r-1)},
\end{align*}
where the first equality follows directly from the definition of $C_k$ in \eqref{denote-b-C}.
This completes the induction proof of \eqref{ineq-rate-r02-k0}.

Recalling that $\mathcal{L}_0 = \frac{\rho (r-2)(r-1)}{2}\big\| \mathcal{R}_{\triangleright}\boldsymbol{u}^{0} \big\|^2\leq\frac{\rho (r-2)(r-1)}{2}D^2$, substituting $\mathcal{L}_0$ into the definition of $C_k$ and {using the inequality \eqref{ineq-rate-r02-k0} yields} the required
inequality \eqref{ineq-rate-r02}. Moreover, if $\sum_{k=0}^{\infty}(k+r-1)^2\varepsilon_k^2 < \infty$, then $b_k$ is bounded, so $C_k$ is bounded. This implies $\|\mathcal{R}_{\triangleright}\boldsymbol{u}^k\|^2 = \mathcal{O}(1/k^2)$, which completes the proof of Statement (i).

\textit{Statement (ii)}.
Consider the exact sequence
$\boldsymbol{v}^0=\boldsymbol{u}^0$,
$\boldsymbol{v}^{k+1}=\beta_k\boldsymbol{u}^0+(1-\beta_k)\mathcal{F}_2\boldsymbol{v}^k$.
Let $\mathcal{L}_k^{\rm ex}$ denote \eqref{def-Lk} along this sequence with $\rho=2$.
Lemma \ref{lemma-suff-dec}(ii) gives
$\mathcal{L}_k^{\rm ex}\leq\mathcal{L}_0^{\rm ex}=(r-2)(r-1)\|\mathcal{R}_{\triangleright}\boldsymbol{u}^0\|^2$.
By 1-co-coercivity,
\[
 \langle\mathcal{R}_{\triangleright}\boldsymbol{v}^k,\boldsymbol{u}^0-\boldsymbol{v}^k\rangle
 \leq D\|\mathcal{R}_{\triangleright}\boldsymbol{v}^k\|
       -\|\mathcal{R}_{\triangleright}\boldsymbol{v}^k\|^2.
\]
Hence, setting $t_k=(k+r-1)\|\mathcal{R}_{\triangleright}\boldsymbol{v}^k\|$, we obtain
\[
 t_k^2-Dt_k\leq\mathcal{L}_k^{\rm ex}
 \leq\mathcal{L}_0^{\rm ex}\leq(r-2)(r-1)D^2,
 \qquad
 \|\mathcal{R}_{\triangleright}\boldsymbol{v}^k\|\leq\frac{(r-1)D}{k+r-1}.
\]
Nonexpansivity of $\mathcal{F}_2$ and the inexactness criterion give
\[
 \|\boldsymbol{u}^{k+1}-\boldsymbol{v}^{k+1}\|
 \leq(1-\beta_k)\big(\|\boldsymbol{u}^k-\boldsymbol{v}^k\|
       +2\|\mathcal{C}^*\|_2\,\varepsilon_k\big).
\]
Since $\prod_{j=i}^{k-1}(1-\beta_j)=(i+r-1)/(k+r-1)$, iteration yields
\[
 \|\boldsymbol{u}^k-\boldsymbol{v}^k\|
 \leq\frac{2\|\mathcal{C}^*\|_2}{k+r-1}
       \sum_{i=0}^{k-1}(i+r-1)\varepsilon_i.
\]
Combining these two bounds with the 1-Lipschitz property of $\mathcal{R}_{\triangleright}$ proves \eqref{ineq-rate-r2}. Its numerator is bounded under the stated weighted summability condition, which proves the squared residual rate.
\hfill
\end{proof}

The following example shows that, at the reflected endpoint $\rho=2$, the weighted summability condition in Theorem \ref{thm-preconditioned-iAccGPPA}(ii) is sharp for guaranteeing an $\mathcal{O}(1/k^{2})$ convergence rate for the squared fixed-point residual, uniformly over all admissible error directions.

\begin{example}[Sharp endpoint inexactness condition]\label{ex-endpoint-errors}
Let $\mathcal{M}=\mathcal{I}$ on $\mathbb{R}$ and let $\mathcal{A}=N_{\{0\}}$ be the normal cone to $\{0\}$. Then $\mathcal{J}_{\mathcal{A}}^{\mathcal{M}}\equiv0$, and the unique solution is $0$. Fix $r\geq2$ and take $\rho=2$, $\boldsymbol{x}^{0}=0$, and $\beta_{k}=1/(k+r)$. For any prescribed nonnegative tolerance sequence $\{\varepsilon_{k}\}$, choose $\boldsymbol{y}^{k}=(-1)^{k}\varepsilon_{k}$. These points satisfy the inexactness criterion, and the resulting iterates satisfy
\begin{equation*}
\big\|\boldsymbol{x}^{k}-\mathcal{J}_{\mathcal{A}}^{\mathcal{M}}\boldsymbol{x}^{k}\big\|
=\lvert\boldsymbol{x}^{k}\rvert
=\frac{2}{k+r-1}\sum_{i=0}^{k-1}(i+r-1)\varepsilon_{i},
\qquad k\geq1.
\end{equation*}
Thus, for this example, an $\mathcal{O}(1/k^{2})$ convergence rate for the squared fixed-point residual holds if and only if $\sum_{k=0}^{\infty}(k+r-1)\varepsilon_{k}<\infty$. Together with Theorem \ref{thm-preconditioned-iAccGPPA}(ii), this establishes the necessity and sufficiency of the weighted summability condition for guaranteeing this rate, uniformly over all admissible error directions. Indeed, choosing $\varepsilon_{k}=(k+1)^{-2}$ in this example yields {$\lvert\boldsymbol{x}^{k}\rvert=\Theta(\log(k)/k)$}. The iterates therefore still converge to the solution $0$. However, the $\mathcal{O}(1/k^{2})$ convergence rate for the squared fixed-point residual is not preserved.
\end{example}

The squared fixed-point residual bound in Theorem \ref{thm-preconditioned-iAccGPPA}(i) contains the factor $12(3-\rho)/(2-\rho)$ in its error term. This factor becomes unbounded as $\rho\uparrow2$. To examine whether this growth is unavoidable, we construct inexact resolvent points for the scalar problem in Example \ref{ex-endpoint-errors} and compute the resulting squared fixed-point residual exactly. We take $\boldsymbol{x}^{0}=0$, so that the exact iterates remain at the solution $0$. Any nonzero squared fixed-point residual is therefore due solely to the resolvent errors.

\begin{proposition}[Error amplification factor]\label{prop-error-amplification}
Let $\mathcal{M}=\mathcal{I}$ and $\mathcal{A}=N_{\{0\}}$ on $\mathbb{R}$, i.e., $\mathcal{A}(x) = \emptyset$ if $x\not=0$ and $\mathcal{A}(x) = \mathbb{R}$ if $x=0$. Note that $\mathcal{A}$ is maximally monotone since it is the normal cone to a nonempty closed convex set. We have that $\mathcal{J}_{\mathcal{A}}^{\mathcal{M}}\equiv0$. Fix a real $r\geq2$ and $0<\rho\leq2$. For every integer $k\geq1$, there exist inexact resolvent points $\boldsymbol{y}^{0},\ldots,\boldsymbol{y}^{k-1}\in\mathbb{R}$, not all zero, such that the iterate $\boldsymbol{x}^{k}$ generated by \eqref{Hal-PPP-inex} from $\boldsymbol{x}^{0}=0$, with $\beta_{i}=1/(i+r)$ and $\varepsilon_{i}=\lvert\boldsymbol{y}^{i}\rvert$, satisfies
\begin{equation}\label{eq-error-amplification}
\lvert\boldsymbol{x}^{k}\rvert^{2}
=\frac{\sum_{i=0}^{k-1}(i+r-1)^{2}\varepsilon_{i}^{2}}{(k+r-1)^{2}}
\begin{cases}
\displaystyle\frac{\rho}{2-\rho}\left[1-(1-\rho)^{2k}\right],&0<\rho<2,\\[5pt]
4k,&\rho=2.
\end{cases}
\end{equation}
\end{proposition}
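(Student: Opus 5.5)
The plan is to exploit the fact that in this example $\mathcal{J}_{\mathcal{A}}^{\mathcal{M}}\equiv0$, so the iteration \eqref{Hal-PPP-inex} becomes a scalar linear recursion driven only by the chosen points $\boldsymbol{y}^i$. I will solve this recursion in closed form after a rescaling. I will then choose the $\boldsymbol{y}^i$ so that the Cauchy--Schwarz inequality linking $\boldsymbol{x}^k$ to the weighted error sum $\sum_i (i+r-1)^2\varepsilon_i^2$ holds with equality.

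\textbf{Step 1: reduce to a scalar linear recursion.} With $\mathcal{M}=\mathcal{I}$ and $\beta_i=1/(i+r)$, we have $1-\beta_i=(i+r-1)/(i+r)$. The update \eqref{step-Hal-relax} with $\boldsymbol{x}^0=0$ reads $\boldsymbol{x}^{i+1}=\frac{i+r-1}{i+r}\big[\rho\boldsymbol{y}^i+(1-\rho)\boldsymbol{x}^i\big]$. Introduce $z_i:=(i+r-1)\boldsymbol{x}^i$ and $w_i:=(i+r-1)\boldsymbol{y}^i$. Multiplying the update by $i+r$ gives
\[
z_{i+1}=(1-\rho)z_i+\rho w_i,\qquad z_0=0.
\]
Unrolling this recursion yields $z_k=\rho\sum_{i=0}^{k-1}a_iw_i$, where $a_i:=(1-\rho)^{k-1-i}$, with the convention $0^0=1$ when $\rho=1$.

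\textbf{Step 2: choose the errors so that Cauchy--Schwarz is tight.} Cauchy--Schwarz gives $z_k^2\le\rho^2\big(\sum_i a_i^2\big)\big(\sum_i w_i^2\big)$, with equality when $w_i\propto a_i$. So for the fixed $k$ I will set
\[
\boldsymbol{y}^i:=\frac{(1-\rho)^{k-1-i}}{i+r-1},\qquad i=0,\dots,k-1.
\]
This choice is not identically zero, since $\boldsymbol{y}^{k-1}=1/(k+r-2)\neq0$; this uses $r\ge2$ and $k\ge1$, so that $k+r-2\ge1$. Set $\varepsilon_i=\lvert\boldsymbol{y}^i\rvert$, so the inexactness criterion \eqref{inexact-criterion} holds trivially because $\mathcal{J}_{\mathcal{A}}^{\mathcal{M}}\boldsymbol{x}^i=0$. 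Then $w_i=a_i$, so $\sum_i(i+r-1)^2\varepsilon_i^2=\sum_i a_i^2=:S_k$. It follows that $z_k=\rho S_k$ and
\[
\lvert\boldsymbol{x}^k\rvert^2=\frac{z_k^2}{(k+r-1)^2}=\frac{\sum_{i=0}^{k-1}(i+r-1)^2\varepsilon_i^2}{(k+r-1)^2}\,\rho^2S_k.
\]

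\textbf{Step 3: evaluate the amplification factor.} It remains to compute $\rho^2S_k=\rho^2\sum_{j=0}^{k-1}(1-\rho)^{2j}$.
\begin{itemize}
\item For $0<\rho<2$, we have $(1-\rho)^2<1$ and $1-(1-\rho)^2=\rho(2-\rho)$. The geometric sum then gives $\rho^2S_k=\frac{\rho}{2-\rho}\big[1-(1-\rho)^{2k}\big]$. At $\rho=1$ this still holds with $S_k=1$, since only the $j=0$ term is nonzero under $0^0=1$.
\item For $\rho=2$, every $a_i^2$ equals $1$, so $S_k=k$ and $\rho^2S_k=4k$.
\end{itemize}
This is exactly \eqref{eq-error-amplification}.

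The only delicate point is the boundary bookkeeping: the convention for $\rho=1$, the requirement that $\boldsymbol{y}$ is not identically zero, and the fact that the construction depends on the target index $k$. Otherwise the argument is a direct computation, and I expect no real obstacle beyond identifying the rescaling $z_i=(i+r-1)\boldsymbol{x}^i$ that linearizes the Halpern weights.
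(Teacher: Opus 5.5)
Your proposal is correct and follows the paper's proof essentially verbatim. Both use the same rescaling by $i+r$, which turns the Halpern step into $(i+r)\boldsymbol{x}^{i+1}=(1-\rho)(i+r-1)\boldsymbol{x}^{i}+\rho(i+r-1)\boldsymbol{y}^{i}$, the same choice $\boldsymbol{y}^{i}=(1-\rho)^{k-1-i}/(i+r-1)$, and the same geometric-sum evaluation. The differences are cosmetic. The paper treats $\rho=1$ as a separate case with $\boldsymbol{y}^{i}=0$ for $i<k-1$ and $\boldsymbol{y}^{k-1}=1/(k+r-2)$, which is exactly what your $0^0=1$ convention produces. Your Cauchy--Schwarz remark also explains why this choice of errors is the worst case, which the paper leaves implicit.
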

\begin{proof}
Fix $k\geq1$. Since $\boldsymbol{x}^{0}=0$, multiplying the update at iteration $i$ by $i+r$ gives
$(i+r)\boldsymbol{x}^{i+1}=(1-\rho)(i+r-1)\boldsymbol{x}^{i}+\rho(i+r-1)\boldsymbol{y}^{i}$.

First, suppose that $\rho=1$. Choose $\boldsymbol{y}^{i}=0$ for $0\leq i<k-1$ and $\boldsymbol{y}^{k-1}=1/(k+r-2)$. The final update gives $\boldsymbol{x}^{k}=1/(k+r-1)$, while $\sum_{i=0}^{k-1}(i+r-1)^{2}\varepsilon_{i}^{2}=1$. These identities prove \eqref{eq-error-amplification} for $\rho=1$.

Now suppose that $\rho\neq1$ and choose
\begin{equation*}
\boldsymbol{y}^{i}=\frac{(1-\rho)^{k-1-i}}{i+r-1},
\qquad 0\leq i<k.
\end{equation*}
These points are not all zero, since $\boldsymbol{y}^{k-1}=1/(k+r-2)>0$.
Iterating this recurrence from $\boldsymbol{x}^{0}=0$ yields
\begin{equation*}
(k+r-1)\boldsymbol{x}^{k}
=\rho{\textstyle\sum_{i=0}^{k-1}}(1-\rho)^{k-1-i}(i+r-1)\boldsymbol{y}^{i}.
\end{equation*}
Substituting the chosen points and using $\varepsilon_{i}=\lvert\boldsymbol{y}^{i}\rvert$ gives
\begin{equation*}
\begin{aligned}
(k+r-1)\boldsymbol{x}^{k}
&=\rho{\textstyle\sum_{j=0}^{k-1}}(1-\rho)^{2j},\\
{\textstyle\sum_{i=0}^{k-1}}(i+r-1)^{2}\varepsilon_{i}^{2}
&={\textstyle\sum_{i=0}^{k-1}}(1-\rho)^{2(k-1-i)}
={\textstyle\sum_{j=0}^{k-1}}(1-\rho)^{2j}.
\end{aligned}
\end{equation*}
Consequently,
\begin{equation*}
\lvert\boldsymbol{x}^{k}\rvert^{2}
=\frac{\sum_{i=0}^{k-1}(i+r-1)^{2}\varepsilon_{i}^{2}}{(k+r-1)^{2}}\left(\rho^{2}\sum_{j=0}^{k-1}(1-\rho)^{2j}\right).
\end{equation*}
For $0<\rho<2$, the factor in parentheses equals $\rho[1-(1-\rho)^{2k}]/(2-\rho)$ by the geometric-sum formula. For $\rho=2$, it equals $4k$. This proves \eqref{eq-error-amplification}.
\end{proof}

For fixed $0<\rho<2$, the factor $\rho[1-(1-\rho)^{2k}]/(2-\rho)$ in \eqref{eq-error-amplification} is bounded in $k$ and tends to $\rho/(2-\rho)$ as $k\to\infty$. Applying a bound of the form \eqref{ineq-rate-r02} to the constructed errors and letting $k\to\infty$ shows that its error coefficient must be at least $\rho/(2-\rho)$. Hence the $(2-\rho)^{-1}$ growth is unavoidable for bounds uniform over iterations and error choices.

For each fixed $k$, the same factor approaches $4k$ as $\rho\uparrow2$, linking the deterioration in $\rho$ to linear growth in $k$ at the endpoint. Therefore, the geometric weights have unit magnitude, and alternating error signs make weighted errors accumulate without cancellation. As Example \ref{ex-endpoint-errors} shows, retaining the $\mathcal{O}(1/k^{2})$ convergence rate for the squared fixed-point residual uniformly over all admissible error directions then requires $\sum_{i=0}^{\infty}(i+r-1)\varepsilon_{i}<\infty$, stronger than the weighted square-summability condition that is sufficient for the case $\rho\in (0,2)$.

\begin{corollary}[Squared Euclidean fixed-point residual rates]\label{cor-euclidean-residual}
Suppose that the assumptions of Theorem \ref{thm-preconditioned-iAccGPPA} hold and that $(\mathcal{M}+\mathcal{A})^{-1}$ is Lipschitz continuous. Assume further that $\sum_{k=0}^{\infty}(k+r-1)^{2}\varepsilon_{k}^{2}<\infty$ when $0<\rho<2$, and that $\sum_{k=0}^{\infty}(k+r-1)\varepsilon_{k}<\infty$ when $\rho=2$. Then, the following statements hold.
\begin{itemize}
\item[\textup{(i)}] For every $0<\rho\leq2$, the inexact resolvent points satisfy
\begin{equation*}
\|\boldsymbol{y}^{k}-\mathcal{J}_{\mathcal{A}}^{\mathcal{M}}\boldsymbol{y}^{k}\|^{2}=\mathcal{O}(1/k^{2}).
\end{equation*}
\item[\textup{(ii)}] If $0<\rho<2$, the iterates also satisfy
\begin{equation*}
\|\boldsymbol{x}^{k}-\mathcal{J}_{\mathcal{A}}^{\mathcal{M}}\boldsymbol{x}^{k}\|^{2}=\mathcal{O}(1/k^{2}).
\end{equation*}
\end{itemize}
\end{corollary}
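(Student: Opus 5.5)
The plan is to transfer the $\mathcal{M}$-seminorm rates of Theorem \ref{thm-preconditioned-iAccGPPA} to the Euclidean norm using the Lipschitz continuity of $\mathcal{B}:=(\mathcal{M}+\mathcal{A})^{-1}$. Let $L$ be its Lipschitz constant. Since $\mathcal{J}_{\mathcal{A}}^{\mathcal{M}}=\mathcal{B}\mathcal{C}\mathcal{C}^*$, Proposition \ref{M-decomposition} gives the key estimate
\[
\|\mathcal{J}_{\mathcal{A}}^{\mathcal{M}}\boldsymbol{x}-\mathcal{J}_{\mathcal{A}}^{\mathcal{M}}\boldsymbol{z}\|\leq L\|\mathcal{C}\|_2\,\|\boldsymbol{x}-\boldsymbol{z}\|_{\mathcal{M}}\leq L\|\mathcal{M}\|_2\|\boldsymbol{x}-\boldsymbol{z}\| .
\]

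Two preliminary facts come from the weighted summability assumptions. In both regimes, $(k+r-1)\varepsilon_k\to0$, so $\varepsilon_k=o(1/k)$. Also $\sum_k\varepsilon_k<\infty$: this is immediate for $\rho=2$, and for $0<\rho<2$ it follows from Cauchy--Schwarz, since $\sum\varepsilon_k\leq(\sum(k+r-1)^2\varepsilon_k^2)^{1/2}(\sum(k+r-1)^{-2})^{1/2}$. Hence Proposition \ref{propo-bound} shows that $\{\boldsymbol{u}^k\}$ is bounded. Because $\mathcal{J}_{\mathcal{A}}^{\mathcal{M}}\boldsymbol{x}^k=\mathcal{B}\mathcal{C}\boldsymbol{u}^k$ and $\mathcal{B}$ is Lipschitz, $\{\mathcal{J}_{\mathcal{A}}^{\mathcal{M}}\boldsymbol{x}^k\}$ is bounded as well. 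Finally, Theorem \ref{thm-preconditioned-iAccGPPA} gives $\|\boldsymbol{x}^k-\mathcal{J}_{\mathcal{A}}^{\mathcal{M}}\boldsymbol{x}^k\|_{\mathcal{M}}=\mathcal{O}(1/k)$ for every $\rho\in(0,2]$.

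For (i), I will split the residual as
\[
\boldsymbol{y}^k-\mathcal{J}_{\mathcal{A}}^{\mathcal{M}}\boldsymbol{y}^k=(\boldsymbol{y}^k-\mathcal{J}_{\mathcal{A}}^{\mathcal{M}}\boldsymbol{x}^k)+(\mathcal{J}_{\mathcal{A}}^{\mathcal{M}}\boldsymbol{x}^k-\mathcal{J}_{\mathcal{A}}^{\mathcal{M}}\mathcal{J}_{\mathcal{A}}^{\mathcal{M}}\boldsymbol{x}^k)+(\mathcal{J}_{\mathcal{A}}^{\mathcal{M}}\mathcal{J}_{\mathcal{A}}^{\mathcal{M}}\boldsymbol{x}^k-\mathcal{J}_{\mathcal{A}}^{\mathcal{M}}\boldsymbol{y}^k).
\]
The first term has norm at most $\varepsilon_k$. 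By the key estimate, the third term is at most $L\|\mathcal{M}\|_2\varepsilon_k$. Applying the key estimate to the pair $(\boldsymbol{x}^k,\mathcal{J}_{\mathcal{A}}^{\mathcal{M}}\boldsymbol{x}^k)$, the second term is at most $L\|\mathcal{C}\|_2\|\boldsymbol{x}^k-\mathcal{J}_{\mathcal{A}}^{\mathcal{M}}\boldsymbol{x}^k\|_{\mathcal{M}}=\mathcal{O}(1/k)$. Combined with $\varepsilon_k=o(1/k)$, this gives an $\mathcal{O}(1/k)$ residual, and squaring proves (i) for all $\rho\in(0,2]$.

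For (ii), fix $0<\rho<2$ and set $\boldsymbol{d}^k:=\boldsymbol{x}^k-\mathcal{J}_{\mathcal{A}}^{\mathcal{M}}\boldsymbol{x}^k$. From \eqref{step-Hal-relax},
\[
\boldsymbol{x}^{k+1}-\mathcal{J}_{\mathcal{A}}^{\mathcal{M}}\boldsymbol{x}^k=\beta_k(\boldsymbol{x}^0-\mathcal{J}_{\mathcal{A}}^{\mathcal{M}}\boldsymbol{x}^k)+(1-\beta_k)(1-\rho)\boldsymbol{d}^k+(1-\beta_k)\rho(\boldsymbol{y}^k-\mathcal{J}_{\mathcal{A}}^{\mathcal{M}}\boldsymbol{x}^k).
\]
Adding the term $\mathcal{J}_{\mathcal{A}}^{\mathcal{M}}\boldsymbol{x}^k-\mathcal{J}_{\mathcal{A}}^{\mathcal{M}}\boldsymbol{x}^{k+1}$ to both sides turns the left side into $\boldsymbol{d}^{k+1}$. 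By the key estimate, the added term is bounded by $L\|\mathcal{C}\|_2\|\boldsymbol{u}^{k+1}-\boldsymbol{u}^k\|$. Lemma \ref{lemma-z}(i) gives
\[
\|\boldsymbol{u}^{k+1}-\boldsymbol{u}^k\|\leq\beta_k\|\boldsymbol{u}^0-\boldsymbol{u}^k\|+\rho\|\mathcal{R}_{\triangleright}\boldsymbol{u}^k\|+\rho\|\mathcal{C}^*\|_2\varepsilon_k=\mathcal{O}(1/k).
\]
Here I use the boundedness of $\{\boldsymbol{u}^k\}$ and the fact that $\|\mathcal{R}_{\triangleright}\boldsymbol{u}^k\|$ equals the $\mathcal{M}$-residual, which is $\mathcal{O}(1/k)$. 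The remaining terms of the displayed identity are at most $\beta_k\|\boldsymbol{x}^0-\mathcal{J}_{\mathcal{A}}^{\mathcal{M}}\boldsymbol{x}^k\|$, which is $\mathcal{O}(1/k)$ by boundedness, and $\rho\varepsilon_k$. Together these give the recursion
\[
\|\boldsymbol{d}^{k+1}\|\leq q\|\boldsymbol{d}^k\|+\frac{c}{k+1},\qquad q:=|1-\rho|<1 .
\]

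The main step is to extract the $\mathcal{O}(1/k)$ rate from this recursion. Multiplying by $k+1$ gives
\[
(k+1)\|\boldsymbol{d}^{k+1}\|\leq q\tfrac{k+1}{k}\,k\|\boldsymbol{d}^k\|+c .
\]
For $k\geq k_0$, $q(k+1)/k\leq q':=(1+q)/2<1$, so $k\|\boldsymbol{d}^k\|\leq\max\{k_0\|\boldsymbol{d}^{k_0}\|,\,c/(1-q')\}$, a bounded quantity. Therefore $\|\boldsymbol{d}^k\|^2=\mathcal{O}(1/k^2)$. This argument relies on $q<1$, which is exactly where it breaks at $\rho=2$, consistent with Example \ref{rem-singular-endpoint}.
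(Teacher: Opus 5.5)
Your proposal is correct and matches the paper's proof essentially step for step. In (i), your three-term split gives the same bound as the paper, $L\|\mathcal{C}\|_2\|\boldsymbol{x}^k-\mathcal{J}_{\mathcal{A}}^{\mathcal{M}}\boldsymbol{x}^k\|_{\mathcal{M}}+(1+L\|\mathcal{M}\|_2)\varepsilon_k$. In (ii), you obtain the same recursion $\|\boldsymbol{d}^{k+1}\|\leq|1-\rho|\,\|\boldsymbol{d}^k\|+c/(k+1)$ from Lemma~\ref{lemma-z}(i), boundedness of $\{\boldsymbol{u}^k\}$, and the Lipschitz property, and the only cosmetic difference is that you close it by an inductive bound on $k\|\boldsymbol{d}^k\|$ beyond a threshold $k_0$, whereas the paper unrolls it and splits the sum at $\lfloor k/2\rfloor$.
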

\begin{proof}
\textit{Statement (i).} Let $L$ be a Lipschitz constant of $(\mathcal{M}+\mathcal{A})^{-1}$. Since $\mathcal{M}=\mathcal{C}\mathcal{C}^{*}$ and $\|\mathcal{C}^{*}\boldsymbol{v}\|=\|\boldsymbol{v}\|_{\mathcal{M}}$, the inexactness criterion and the triangle inequality give
\begin{equation}\label{eq-computed-point-euclidean}
\begin{aligned}
\|\boldsymbol{y}^{k}-\mathcal{J}_{\mathcal{A}}^{\mathcal{M}}\boldsymbol{y}^{k}\|
&\leq\varepsilon_{k}+L\|\mathcal{C}\|_2\|\boldsymbol{x}^{k}-\boldsymbol{y}^{k}\|_{\mathcal{M}}\\
&\leq L\|\mathcal{C}\|_2\|\boldsymbol{x}^{k}-\mathcal{J}_{\mathcal{A}}^{\mathcal{M}}\boldsymbol{x}^{k}\|_{\mathcal{M}}
+(1+L\|\mathcal{C}\|_2\|\mathcal{C}^{*}\|_2)\varepsilon_{k}.
\end{aligned}
\end{equation}
Each of the two weighted summability conditions implies $(k+r-1)\varepsilon_{k}\to0$, and hence $\varepsilon_{k}=o(1/k)$. By Theorem \ref{thm-preconditioned-iAccGPPA}, the seminorm residual on the right-hand side is $\mathcal{O}(1/k)$ for every $0<\rho\leq2$. Squaring \eqref{eq-computed-point-euclidean} proves statement (i).

\textit{Statement (ii).} Suppose that $0<\rho<2$.
The weighted summability condition and the Cauchy--Schwarz inequality imply $\sum_k\varepsilon_k<\infty$ and $\varepsilon_k=o(1/k)$. Proposition \ref{propo-bound} and Theorem \ref{thm-preconditioned-iAccGPPA}(i) therefore give boundedness of $\{\boldsymbol{u}^k\}$ and $\|\mathcal{R}_{\triangleright}\boldsymbol{u}^k\|=\mathcal{O}(1/k)$. Combining Lemma \ref{lemma-z}(i) with \eqref{Hal-PPP-inex-u} yields $\|\boldsymbol{u}^{k+1}-\boldsymbol{u}^k\|=\mathcal{O}(1/k)$. By the Lipschitz assumption and
\begin{equation*}
 {\mathcal{J}}_{\mathcal{A}}^{\mathcal{M}}\boldsymbol{x}^k
 =(\mathcal{M}+\mathcal{A})^{-1}\mathcal{C}\boldsymbol{u}^k,
\end{equation*}
the sequence $\{{\mathcal{J}}_{\mathcal{A}}^{\mathcal{M}}\boldsymbol{x}^k\}$ is bounded and
$\|{\mathcal{J}}_{\mathcal{A}}^{\mathcal{M}}\boldsymbol{x}^{k+1}-{\mathcal{J}}_{\mathcal{A}}^{\mathcal{M}}\boldsymbol{x}^k\|=\mathcal{O}(1/k)$.
Using \eqref{Hal-PPP-inex}, we obtain
\begin{align*}
 &\big\|\boldsymbol{x}^{k+1}-{\mathcal{J}}_{\mathcal{A}}^{\mathcal{M}}\boldsymbol{x}^{k+1}\big\|
 \;\leq\; |1-\rho|\big\|\boldsymbol{x}^{k}-{\mathcal{J}}_{\mathcal{A}}^{\mathcal{M}}\boldsymbol{x}^{k}\big\|
 +\big\|{\mathcal{J}}_{\mathcal{A}}^{\mathcal{M}}\boldsymbol{x}^{k+1}-{\mathcal{J}}_{\mathcal{A}}^{\mathcal{M}}\boldsymbol{x}^{k}\big\|
 +\beta_k\big\|\boldsymbol{x}^0-{\mathcal{J}}_{\mathcal{A}}^{\mathcal{M}}\boldsymbol{x}^{k}\big\|+\rho\varepsilon_k
 \\
 &
 \leq |1-\rho|\big\|\boldsymbol{x}^{k}-{\mathcal{J}}_{\mathcal{A}}^{\mathcal{M}}\boldsymbol{x}^{k}\big\|+\frac{B}{k+1}
\end{align*}
for some constant $B>0$ and every $k\geq0$. If $\rho=1$, statement (ii) follows immediately. Otherwise, iterating this inequality gives
\begin{align*}
\big\|\boldsymbol{x}^{k}-{\mathcal{J}}_{\mathcal{A}}^{\mathcal{M}}\boldsymbol{x}^{k}\big\|
&\leq |1-\rho|^k\big\|\boldsymbol{x}^{0}-{\mathcal{J}}_{\mathcal{A}}^{\mathcal{M}}\boldsymbol{x}^{0}\big\|
+B\sum_{i=0}^{k-1}\frac{|1-\rho|^{k-1-i}}{i+1}=\mathcal{O}(1/k),
\end{align*}
where the last bound follows by splitting the sum at $\lfloor k/2\rfloor$ and using $|1-\rho|<1$. {Squaring this bound proves statement (ii).}
\end{proof}

Example \ref{rem-singular-endpoint} shows that, when $\mathcal{M}$ is singular, the squared Euclidean fixed-point residual bound for $\boldsymbol{x}^{k}$ in Corollary \ref{cor-euclidean-residual}(ii) cannot in general be extended to $\rho=2$, even with exact resolvent evaluations.

We next give a computable bound on the squared fixed-point residuals in the $\mathcal{M}$-seminorm at both $\boldsymbol{x}^{k}$ and $\boldsymbol{y}^{k}$. This bound provides a stopping criterion under which either point may be returned.

\begin{corollary}[Computable seminorm residual bound]\label{cor-output-certificate}
Suppose that the assumptions of Theorem \ref{thm-preconditioned-iAccGPPA} hold. Assume that $\sum_{k=0}^{\infty}(k+r-1)^{2}\varepsilon_{k}^{2}<\infty$ when $0<\rho<2$, and that $\sum_{k=0}^{\infty}(k+r-1)\varepsilon_{k}<\infty$ when $\rho=2$. Then
\begin{equation}\label{eq-computable-squared-residual}
\begin{aligned}
\max\left\{
\|\boldsymbol{x}^{k}-\mathcal{J}_{\mathcal{A}}^{\mathcal{M}}\boldsymbol{x}^{k}\|_{\mathcal{M}}^{2},\,
\|\boldsymbol{y}^{k}-\mathcal{J}_{\mathcal{A}}^{\mathcal{M}}\boldsymbol{y}^{k}\|_{\mathcal{M}}^{2}
\right\}
\leq\left(\|\boldsymbol{x}^{k}-\boldsymbol{y}^{k}\|_{\mathcal{M}}+\sqrt{\|\mathcal{M}\|_2}\varepsilon_{k}\right)^{2}
=\mathcal{O}(1/k^{2}).
\end{aligned}
\end{equation}
\end{corollary}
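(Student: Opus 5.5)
Everything will be transferred to the auxiliary space via $\boldsymbol{u}^k=\mathcal{C}^*\boldsymbol{x}^k$, $\widetilde{\boldsymbol{u}}^k=\mathcal{C}^*\boldsymbol{y}^k$. There $\|\boldsymbol{v}\|_{\mathcal{M}}=\|\mathcal{C}^*\boldsymbol{v}\|$, and by Proposition \ref{parallel-comp} we have $\mathcal{C}^*\mathcal{J}_{\mathcal{A}}^{\mathcal{M}}\boldsymbol{v}=\mathcal{J}_{\mathcal{C}^*\triangleright\mathcal{A}}(\mathcal{C}^*\boldsymbol{v})$ for every $\boldsymbol{v}$. Hence
\[
\|\boldsymbol{x}^k-\mathcal{J}_{\mathcal{A}}^{\mathcal{M}}\boldsymbol{x}^k\|_{\mathcal{M}}=\|\mathcal{R}_{\triangleright}\boldsymbol{u}^k\|,\qquad
\|\boldsymbol{y}^k-\mathcal{J}_{\mathcal{A}}^{\mathcal{M}}\boldsymbol{y}^k\|_{\mathcal{M}}=\|\mathcal{R}_{\triangleright}\widetilde{\boldsymbol{u}}^k\|,
\]
and the first inequality in \eqref{eq-computable-squared-residual} reduces to bounding both residual norms by $\|\boldsymbol{u}^k-\widetilde{\boldsymbol{u}}^k\|+\|\mathcal{C}^*\|_2\varepsilon_k$. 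Here $\|\mathcal{C}^*\|_2=\sqrt{\|\mathcal{M}\|_2}$ by Proposition \ref{M-decomposition}, and $\|\boldsymbol{u}^k-\widetilde{\boldsymbol{u}}^k\|=\|\boldsymbol{x}^k-\boldsymbol{y}^k\|_{\mathcal{M}}$.

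For the $\boldsymbol{x}^k$ term I use the triangle inequality together with the inexactness criterion in \eqref{Hal-PPP-inex-u}:
\[
\|\mathcal{R}_{\triangleright}\boldsymbol{u}^k\|=\|\boldsymbol{u}^k-\mathcal{J}_{\mathcal{C}^*\triangleright\mathcal{A}}\boldsymbol{u}^k\|\le\|\boldsymbol{u}^k-\widetilde{\boldsymbol{u}}^k\|+\|\widetilde{\boldsymbol{u}}^k-\mathcal{J}_{\mathcal{C}^*\triangleright\mathcal{A}}\boldsymbol{u}^k\|\le\|\boldsymbol{u}^k-\widetilde{\boldsymbol{u}}^k\|+\|\mathcal{C}^*\|_2\varepsilon_k .
\]

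For the $\boldsymbol{y}^k$ term I use that $\mathcal{R}_{\triangleright}=\mathcal{I}-\mathcal{J}_{\mathcal{C}^*\triangleright\mathcal{A}}$ is firmly nonexpansive, hence $1$-Lipschitz; this is Lemma \ref{lemma-nonexpansive}(ii) applied to the maximally monotone operator $\mathcal{C}^*\triangleright\mathcal{A}$. Writing $\boldsymbol{p}^k:=\mathcal{J}_{\mathcal{C}^*\triangleright\mathcal{A}}\boldsymbol{u}^k$, this gives
\[
\|\mathcal{R}_{\triangleright}\widetilde{\boldsymbol{u}}^k\|\le\|\mathcal{R}_{\triangleright}\boldsymbol{p}^k\|+\|\widetilde{\boldsymbol{u}}^k-\boldsymbol{p}^k\|.
\]
By firm nonexpansiveness, $\|\mathcal{R}_{\triangleright}\boldsymbol{p}^k\|=\|\mathcal{J}_{\mathcal{C}^*\triangleright\mathcal{A}}\boldsymbol{u}^k-\mathcal{J}_{\mathcal{C}^*\triangleright\mathcal{A}}\boldsymbol{p}^k\|\le\|\boldsymbol{u}^k-\boldsymbol{p}^k\|=\|\mathcal{R}_{\triangleright}\boldsymbol{u}^k\|$. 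Chaining this with the $\boldsymbol{x}^k$ bound would give $\|\boldsymbol{u}^k-\widetilde{\boldsymbol{u}}^k\|+2\|\mathcal{C}^*\|_2\varepsilon_k$, which has one error term too many. This is the main obstacle, and I would avoid it by comparing $\widetilde{\boldsymbol{u}}^k$ with $\boldsymbol{u}^k$ directly. Since $\mathcal{R}_{\triangleright}$ is $1$-Lipschitz, $\|\mathcal{R}_{\triangleright}\widetilde{\boldsymbol{u}}^k-\mathcal{R}_{\triangleright}\boldsymbol{u}^k\|\le\|\widetilde{\boldsymbol{u}}^k-\boldsymbol{u}^k\|$. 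Subtracting the two residuals then gives
\[
\widetilde{\boldsymbol{u}}^k-\mathcal{J}_{\mathcal{C}^*\triangleright\mathcal{A}}\widetilde{\boldsymbol{u}}^k=(\widetilde{\boldsymbol{u}}^k-\boldsymbol{p}^k)+(\boldsymbol{p}^k-\mathcal{J}_{\mathcal{C}^*\triangleright\mathcal{A}}\widetilde{\boldsymbol{u}}^k),
\]
whose norm is at most $\|\mathcal{C}^*\|_2\varepsilon_k+\|\boldsymbol{u}^k-\widetilde{\boldsymbol{u}}^k\|$, because the resolvent is nonexpansive. This is exactly the claimed bound. Squaring the two bounds and taking the maximum proves the inequality in \eqref{eq-computable-squared-residual}.

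For the $\mathcal{O}(1/k^2)$ claim, I bound the right-hand side as follows:
\[
\|\boldsymbol{u}^k-\widetilde{\boldsymbol{u}}^k\|\le\|\mathcal{R}_{\triangleright}\boldsymbol{u}^k\|+\|\mathcal{C}^*\|_2\varepsilon_k ,
\]
so the right-hand side is at most $(\|\mathcal{R}_{\triangleright}\boldsymbol{u}^k\|+2\|\mathcal{C}^*\|_2\varepsilon_k)^2$. Theorem \ref{thm-preconditioned-iAccGPPA} under the stated weighted summability conditions gives $\|\mathcal{R}_{\triangleright}\boldsymbol{u}^k\|=\mathcal{O}(1/k)$ in both regimes. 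Each weighted summability condition forces $(k+r-1)\varepsilon_k\to0$, as noted in the proof of Corollary \ref{cor-euclidean-residual}, so $\varepsilon_k=o(1/k)$. Combining these, the right-hand side is $\mathcal{O}(1/k^2)$, which completes the argument.
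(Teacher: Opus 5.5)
Your proof is correct and is the paper's argument transported to $\mathbb{R}^m$ via $\mathcal{C}^*$. The paper uses the inexactness criterion with the (reverse) triangle inequality for the $\boldsymbol{x}^k$ term and for the $\mathcal{O}(1/k^2)$ rate, and the nonexpansiveness of $\mathcal{J}_{\mathcal{A}}^{\mathcal{M}}$ in the $\mathcal{M}$-seminorm (equivalently, of $\mathcal{J}_{\mathcal{C}^*\triangleright\mathcal{A}}$) for the $\boldsymbol{y}^k$ term, exactly as in your decomposition through $\boldsymbol{p}^k$; your remark that $\mathcal{R}_{\triangleright}$ is $1$-Lipschitz is never used in that final step and can be dropped.
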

\begin{proof}
The inexactness criterion and the reverse triangle inequality give
\begin{equation*}
\left\lvert\|\boldsymbol{x}^{k}-\boldsymbol{y}^{k}\|_{\mathcal{M}}
-\|\boldsymbol{x}^{k}-\mathcal{J}_{\mathcal{A}}^{\mathcal{M}}\boldsymbol{x}^{k}\|_{\mathcal{M}}\right\rvert
\leq\sqrt{\|\mathcal{M}\|_2}\,\varepsilon_{k}.
\end{equation*}
By Proposition \ref{parallel-comp}, $\mathcal{J}_{\mathcal{A}}^{\mathcal{M}}$ is nonexpansive in the $\mathcal{M}$-seminorm. Hence
\begin{equation*}
\begin{aligned}
\|\boldsymbol{y}^{k}-\mathcal{J}_{\mathcal{A}}^{\mathcal{M}}\boldsymbol{y}^{k}\|_{\mathcal{M}}
&\leq\|\boldsymbol{y}^{k}-\mathcal{J}_{\mathcal{A}}^{\mathcal{M}}\boldsymbol{x}^{k}\|_{\mathcal{M}}
+\|\mathcal{J}_{\mathcal{A}}^{\mathcal{M}}\boldsymbol{x}^{k}-\mathcal{J}_{\mathcal{A}}^{\mathcal{M}}\boldsymbol{y}^{k}\|_{\mathcal{M}}
\;\leq\;\sqrt{\|\mathcal{M}\|_2}\,\varepsilon_{k}+\|\boldsymbol{x}^{k}-\boldsymbol{y}^{k}\|_{\mathcal{M}}.
\end{aligned}
\end{equation*}
Both weighted summability conditions imply $\varepsilon_{k}=o(1/k)$. Theorem \ref{thm-preconditioned-iAccGPPA} and the first inequality therefore give $\|\boldsymbol{x}^{k}-\boldsymbol{y}^{k}\|_{\mathcal{M}}=\mathcal{O}(1/k)$. Squaring the preceding bounds proves \eqref{eq-computable-squared-residual}.
\end{proof}

The inequalities in the proof hold at every iteration without the weighted summability conditions. Thus, for a target accuracy $\epsilon>0$, the test $\|\boldsymbol{x}^{k}-\boldsymbol{y}^{k}\|_{\mathcal{M}}+\sqrt{\|\mathcal{M}\|_2}\,\varepsilon_{k}\leq\epsilon$ allows either $\boldsymbol{x}^{k}$ or $\boldsymbol{y}^{k}$ to be returned with a squared fixed-point residual in the $\mathcal{M}$-seminorm of at most $\epsilon^{2}$.  When $\mathcal{M}\succ0$, norm equivalence also gives $\|\boldsymbol{x}^{k}-\mathcal{J}_{\mathcal{A}}^{\mathcal{M}}\boldsymbol{x}^{k}\|^{2}=\mathcal{O}(1/k^{2})$ for all $0<\rho\leq2$ under the same weighted summability conditions.

Before closing this section, we highlight several consequences of the convergence-rate bounds. First, \iHPGPPA recovers the inexact Halpern proximal framework of \cite{ltz2024inexact} as the special case $\mathcal{M}=\mathcal{I}$, $\rho = 1$ and $r = 2$, while extending the theory to the whole relaxation interval $\rho\in(0,2]$. Second, in the exact setting, the bound in \eqref{ineq-rate-r02} shows that increasing $\rho$ improves the worst-case constant throughout the interior regime $\rho\in(0,2)$, which is consistent with the practical appeal of over-relaxation. Third, the endpoint $\rho=2$ is not merely a limiting case of the interior analysis; it instead obeys a different bound, and our analysis uses a strictly stronger inexactness condition to preserve the accelerated rate. This shows that the reflected resolvent case is theoretically more delicate in the presence of inexact subproblem solves. Finally, the tunable anchoring rule $\beta_k = \frac{1}{k+r}$ with $r \geq 2$ provides an additional degree of flexibility. Choosing $r>2$ weakens the anchoring effect relative to the classical choice $\frac{1}{k+2}$, which is possibly beneficial in practice while preserving the $\mathcal{O}(1/k^2)$ worst-case rate.

\section{Applications}\label{sec-App}

In this section, we apply our \iHPGPPA to pADMM and preconditioned PDHG to derive their inexact Halpern-accelerated versions.

\subsection{Inexact accelerated pADMM}\label{sec-App-pADMM}

Consider the following convex linearly constrained minimization problem:
\begin{equation}\label{eq-padmm-P}
\min\big\{f(y)+g(z)\mid Ay+Bz=c\big\}.
\end{equation}
Here $f$ and $g$ are closed proper convex functions defined on finite-dimensional Euclidean spaces. The linear operators $A$ and $B$ have adjoints $A^*$ and $B^*$. Let $w:=(y,z,x)${, where $x$ is the multiplier for the constraint $Ay+Bz=c$}. The KKT conditions for \eqref{eq-padmm-P} can be written as
\begin{equation}\label{eq-padmm-KKT}
0\in\mathcal{T}(w):=
\begin{pmatrix}
\partial f(y)+A^*x\\
\partial g(z)+B^*x\\
c-Ay-Bz
\end{pmatrix}.
\end{equation}
We assume that \eqref{eq-padmm-KKT} has a solution. The operator $\mathcal{T}$ is maximally monotone. For $\sigma>0$, the augmented Lagrangian function associated with \eqref{eq-padmm-P} is given by
\begin{equation*}
L_\sigma(y,z;x)=f(y)+g(z)
+\frac{\sigma}{2}\|Ay+Bz-c+\sigma^{-1}x\|^2
-\frac{1}{2\sigma}\|x\|^2.
\end{equation*}
{Let $T_y$ and $T_z$ be self-adjoint positive semidefinite operators such that}
\begin{equation*}
M_y:=T_y+A^*A\succ0,\qquad M_z:=T_z+B^*B\succ0.
\end{equation*}
The pADMM template \cite{xcl2018generalized}
for solving \eqref{eq-padmm-P} is given as follows:
\begin{equation}\label{eq-padmm-template}
\begin{aligned}
\bar{z}^{k+1}
&=\mathop{\rm argmin}_{{z}}\left\{L_\sigma(y^k,z;x^k)
+\frac{\sigma}{2}\|z-z^k\|_{T_z}^2\right\},\\
\bar{x}^{k+1}
&=x^k+\sigma(Ay^k+B\bar{z}^{k+1}-c),\\
\bar{y}^{k+1}
&=\mathop{\rm argmin}_{{y}}\left\{L_\sigma(y,\bar{z}^{k+1};\bar{x}^{k+1})
+\frac{\sigma}{2}\|y-y^k\|_{T_y}^2\right\},\\
w^{k+1}
&=(1-\rho)w^k+\rho\bar{w}^{k+1},\qquad \rho\in(0,2],\qquad k\geq 0,
\end{aligned}
\end{equation}
where $\bar{w}^{k+1}:=(\bar{y}^{k+1},\bar{z}^{k+1},\bar{x}^{k+1})$.
Let
\begin{equation*}
\mathcal{M} :=
\begin{pmatrix}
\sigma A^*A+\sigma T_y & 0 & A^*\\
0 & \sigma T_z & 0\\
A & 0 & \sigma^{-1}I
\end{pmatrix},
\qquad \textrm{and}\qquad
{\mathcal{J}_{\mathcal{T}}^{\mathcal{M}}}
=(\mathcal{M}+\mathcal{T})^{-1}\mathcal{M}.
\end{equation*}
{It has been shown in \cite{syzz2024acc-pADMM} that}  $0\in\mathcal{T}(\bar{w}^{k+1})+\mathcal{M}(\bar{w}^{k+1}-w^k)$ {is equivalent to}
\begin{equation*}
\left\{
\begin{aligned}
0&\in\partial g(\bar{z}^{k+1})+B^*(x^k+\sigma(Ay^k+B\bar{z}^{k+1}-c))
+\sigma T_z(\bar{z}^{k+1}-z^k),\\
\bar{x}^{k+1}
&=x^k+\sigma(Ay^k+B\bar{z}^{k+1}-c),\\
0&\in\partial f(\bar{y}^{k+1})+A^*\bar{x}^{k+1}
+\sigma(A^*A+T_y)(\bar{y}^{k+1}-y^k)+A^*(\bar{x}^{k+1}-x^k),
\end{aligned}
\right.
\end{equation*}
{which gives the first three steps of \eqref{eq-padmm-template}.} Thus $\bar{w}^{k+1}
=\mathcal{J}_{\mathcal{T}}^{\mathcal{M}}(w^k)${,
i.e., \eqref{eq-padmm-template} is a preconditioned generalized PPA. The operator $\mathcal{M}$ is positive semidefinite; since $M_y, M_z\succ0$, the two subproblems have unique solutions for every $w^k$, so $\mathcal{M}$ is an admissible preconditioner for $\mathcal{T}$.}

The Halpern{-}accelerated {iteration} is
\begin{equation*}
\begin{aligned}
w^{k+1}
&=\beta_k w^0+(1-\beta_k)\big[(1-\rho)w^k+\rho\bar{w}^{k+1}\big],
\end{aligned}
\end{equation*}
where $\beta_k=\frac{1}{k+r}$, for a given $r\geq2$.
We consider {the following} inexact {iteration}:
\begin{subequations}\label{eq-padmm-inexact}
\begin{align}
\text{Find}~ &\widetilde{w}^{k+1}~ \text{such that}\
\|\widetilde{w}^{k+1}-\mathcal{J}_{\mathcal{T}}^{\mathcal{M}}
w^k\|
=\|\widetilde{w}^{k+1}-\bar{w}^{k+1}\|\leq\varepsilon_k,
\label{eq-padmm-criterion}\\
w^{k+1}
&=\beta_k w^0+(1-\beta_k)
\big[(1-\rho)w^k+\rho\widetilde{w}^{k+1}\big],\qquad k\geq0.
\end{align}
\end{subequations}
This is Algorithm \ref{algo-iAccGPPA} with $\mathcal{A}=\mathcal{T}$, using $w^k$ and $\widetilde{w}^{k+1}$ in place of $\boldsymbol{x}^k$ and $\boldsymbol{y}^k$, respectively. Here $\widetilde{w}^{k+1}:=(\widetilde{y}^{k+1},\widetilde{z}^{k+1},\widetilde{x}^{k+1})$. We next show how to check the inexactness condition \eqref{eq-padmm-criterion} by using the subproblem residuals.

Let
\begin{equation*}
l_z^k:=-B^*(Ay^k+Bz^k-c+\sigma^{-1}x^k)+M_z z^k.
\end{equation*}
We assume that the inexactly computed $\widetilde{z}^{k+1}$ satisfies the condition
\begin{equation}\label{eq-padmm-z-residual}
\sigma d_z^k\in\partial g(\widetilde{z}^{k+1})
+\sigma M_z(\widetilde{z}^{k+1}-M_z^{-1}l_z^k),
\end{equation}
where $\sigma d_z^k$ denotes the residual vector incurred in solving the first subproblem in \eqref{eq-padmm-template}. Set
\begin{equation*}
\begin{aligned}
\widetilde{x}^{k+1}&:=x^k+\sigma(Ay^k+B\widetilde{z}^{k+1}-c),\\
\widetilde{l}_y^k&:=-A^*(Ay^k+B\widetilde{z}^{k+1}-c+\sigma^{-1}\widetilde{x}^{k+1})+M_y y^k.
\end{aligned}
\end{equation*}
We assume that the inexactly computed $\widetilde{y}^{k+1}$ satisfies the condition
\begin{equation}\label{eq-padmm-y-residual}
\sigma d_y^k \in\partial f(\widetilde{y}^{k+1})
+\sigma M_y\big(\widetilde{y}^{k+1}-M_y^{-1}\widetilde{l}_y^k\big),
\end{equation}
where $\sigma d_y^k$ denotes the residual vector incurred in solving the second subproblem in \eqref{eq-padmm-template}
with $\bar{z}^{k+1}$ and $\bar{x}^{k+1}$ replaced by
$\widetilde{z}^{k+1}$ and $\widetilde{x}^{k+1}$, respectively.

\begin{proposition}\label{prop-padmm-inexactness}\textup{(\textbf{Inexactness criterion})}
Suppose that $\widetilde{z}^{k+1}$ and $\widetilde{y}^{k+1}$ satisfy \eqref{eq-padmm-z-residual} and \eqref{eq-padmm-y-residual}, respectively. Then, it holds that
\begin{equation}\label{eq-padmm-errorbound}
\begin{aligned}
\|\widetilde{w}^{k+1}-\bar{w}^{k+1}\|^2
&\leq\left(\sigma^2+\|M_z^{-1/2}\|_2^2\right)\|M_z^{-1/2}d_z^k\|^2\\
&\qquad+\|M_y^{-1/2}\|_2^2
\left(\|M_y^{-1/2}d_y^k\|+2\|M_z^{-1/2}d_z^k\|\right)^2.
\end{aligned}
\end{equation}
In particular, if $d_z^k=0$, then
\begin{equation}\label{eq-padmm-exact-z}
\|\widetilde{w}^{k+1}-\bar{w}^{k+1}\|
\leq\|M_y^{-1/2}\|_2\|M_y^{-1/2}d_y^k\|.
\end{equation}
\end{proposition}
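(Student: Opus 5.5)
The plan is to compare, block by block, the optimality condition of each inexactly solved subproblem with that of its exact counterpart. Monotonicity of $\partial g$ and $\partial f$ then turns each residual into a bound on the error in the $M_z$- or $M_y$-norm. The three blocks are treated in the order $z$, then $x$, then $y$, because the $y$-subproblem inherits the error from $\widetilde{z}^{k+1}$ and $\widetilde{x}^{k+1}$.

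\emph{Step 1 ($z$-block).} First I would check that the exact step $\bar z^{k+1}$ satisfies $0\in\partial g(\bar z^{k+1})+\sigma M_z(\bar z^{k+1}-M_z^{-1}l_z^k)$. This follows by expanding the optimality condition of the first subproblem in \eqref{eq-padmm-template}, using $M_z=T_z+B^*B$. Subtracting this inclusion from \eqref{eq-padmm-z-residual} and using monotonicity of $\partial g$ gives
\[
\langle \sigma d_z^k-\sigma M_z(\widetilde z^{k+1}-\bar z^{k+1}),\,\widetilde z^{k+1}-\bar z^{k+1}\rangle\ge0 .
\]
Writing $\langle d_z^k,v\rangle=\langle M_z^{-1/2}d_z^k,M_z^{1/2}v\rangle$ and applying Cauchy--Schwarz yields $\|\widetilde z^{k+1}-\bar z^{k+1}\|_{M_z}\le\|M_z^{-1/2}d_z^k\|$. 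Hence $\|\widetilde z^{k+1}-\bar z^{k+1}\|\le\|M_z^{-1/2}\|_2\|M_z^{-1/2}d_z^k\|$.

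\emph{Step 2 ($x$-block).} From the definitions, $\widetilde x^{k+1}-\bar x^{k+1}=\sigma B(\widetilde z^{k+1}-\bar z^{k+1})$. Since $M_z\succeq B^*B$, we have $\|Bv\|\le\|v\|_{M_z}$, and so $\|\widetilde x^{k+1}-\bar x^{k+1}\|\le\sigma\|M_z^{-1/2}d_z^k\|$. Together with Step 1 this produces the first term of \eqref{eq-padmm-errorbound}.

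\emph{Step 3 ($y$-block).} Let $\bar l_y^k$ be defined like $\widetilde l_y^k$ but with $(\bar z^{k+1},\bar x^{k+1})$ in place of $(\widetilde z^{k+1},\widetilde x^{k+1})$. Expanding the third subproblem should give $0\in\partial f(\bar y^{k+1})+\sigma M_y(\bar y^{k+1}-M_y^{-1}\bar l_y^k)$. A direct computation gives
\[
\widetilde l_y^k-\bar l_y^k=-A^*\big(B\delta_z+\sigma^{-1}\delta_x\big)=-2A^*B\delta_z,
\]
where $\delta_z=\widetilde z^{k+1}-\bar z^{k+1}$ and $\delta_x=\sigma B\delta_z$. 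Subtracting the exact inclusion from \eqref{eq-padmm-y-residual} and using monotonicity of $\partial f$ as in Step 1 gives
\[
\|\widetilde y^{k+1}-\bar y^{k+1}\|_{M_y}\le\|M_y^{-1/2}(d_y^k+\widetilde l_y^k-\bar l_y^k)\|\le\|M_y^{-1/2}d_y^k\|+2\|M_y^{-1/2}A^*\|_2\,\|B\delta_z\|.
\]
Since $M_y\succeq A^*A$, we have $AM_y^{-1}A^*\preceq I$, so $\|M_y^{-1/2}A^*\|_2\le1$. Combined with $\|B\delta_z\|\le\|M_z^{-1/2}d_z^k\|$, this gives $\|\widetilde y^{k+1}-\bar y^{k+1}\|\le\|M_y^{-1/2}\|_2\big(\|M_y^{-1/2}d_y^k\|+2\|M_z^{-1/2}d_z^k\|\big)$.

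Summing the squares of the three block errors gives \eqref{eq-padmm-errorbound}. If $d_z^k=0$, the $z$- and $x$-errors vanish and the bound reduces to \eqref{eq-padmm-exact-z}.

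The main obstacle is the bookkeeping in Step 3. One must verify that the exact $y$-step is indeed characterized through $\bar l_y^k$ with the stated constants, which can be read off from the equivalent inclusion displayed before the algorithm. One must also track the $\sigma$ factors so that the perturbation comes out as exactly $2A^*B\delta_z$, since this is what yields the factor $2$ in \eqref{eq-padmm-errorbound}.
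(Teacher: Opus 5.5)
Your proposal is correct and follows essentially the paper's proof. The ingredients match: the same block order $z\to x\to y$, the bound $\|\widetilde z^{k+1}-\bar z^{k+1}\|_{M_z}\le\|M_z^{-1/2}d_z^k\|$, the perturbation identity $\widetilde l_y^k-l_y^k=-2A^*B(\widetilde z^{k+1}-\bar z^{k+1})$, and the final conversion via $\|v\|\le\|H^{-1/2}\|_2\|v\|_H$. The only difference is cosmetic: you use monotonicity of $\partial g$ and $\partial f$ directly, whereas the paper cites the equivalent firm nonexpansiveness of $\operatorname{Prox}_h^H$ in the $H$-norm.
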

\begin{proof}
For $H\succ0$ and a closed proper convex function $h$, define
\begin{equation*}
\operatorname{Prox}_{h}^{H}(u):=\operatorname*{argmin}_{z}
\left\{h(z)+\frac{1}{2}\|z-u\|_{H}^{2}\right\}.
\end{equation*}
This mapping is firmly nonexpansive in the $H$-norm \cite[Proposition 2.3]{LS1997}:
\begin{equation}\label{eq-padmm-prox}
\|\operatorname{Prox}_{h}^{H}(v)-\operatorname{Prox}_{h}^{H}(u)\|_{H}^{2}
\leq\langle v-u,\operatorname{Prox}_{h}^{H}(v)-\operatorname{Prox}_{h}^{H}(u)\rangle_{H}.
\end{equation}
We apply this inequality to two subproblems and track the error through the multiplier update.

\begin{enumerate}[label=(\alph*),leftmargin=*]
\item From the optimality condition of the $z$-subproblem and \eqref{eq-padmm-z-residual}, we have that
\begin{equation*}
\bar{z}^{k+1}=\operatorname{Prox}_{g/\sigma}^{M_{z}}(M_{z}^{-1}l_{z}^{k}),
\qquad
\widetilde{z}^{k+1}=\operatorname{Prox}_{g/\sigma}^{M_{z}}\big(M_{z}^{-1}(l_{z}^{k}+d_{z}^{k})\big).
\end{equation*}
Applying \eqref{eq-padmm-prox} with $H=M_{z}$, $h=g/\sigma$, $u=M_{z}^{-1}l_{z}^{k}$, and $v=M_{z}^{-1}(l_{z}^{k}+d_{z}^{k})$, we obtain
\begin{equation*}
\begin{aligned}
\|\widetilde{z}^{k+1}-\bar{z}^{k+1}\|_{M_{z}}^{2}
&\leq\langle M_{z}^{-1}d_{z}^{k},\widetilde{z}^{k+1}-\bar{z}^{k+1}\rangle_{M_{z}}
=\langle d_{z}^{k},\widetilde{z}^{k+1}-\bar{z}^{k+1}\rangle\\
&=\langle M_{z}^{-1/2}d_{z}^{k},M_{z}^{1/2}(\widetilde{z}^{k+1}-\bar{z}^{k+1})\rangle
\leq\|M_{z}^{-1/2}d_{z}^{k}\|\,\|\widetilde{z}^{k+1}-\bar{z}^{k+1}\|_{M_{z}}.
\end{aligned}
\end{equation*}
Then, if $\|\widetilde{z}^{k+1}-\bar{z}^{k+1}\|_{M_{z}}>0$, dividing both sides by this norm yields
\begin{equation*}
\|\widetilde{z}^{k+1}-\bar{z}^{k+1}\|_{M_{z}}
\leq\|M_{z}^{-1/2}d_{z}^{k}\|.
\end{equation*}
The same bound holds trivially when $\widetilde{z}^{k+1}=\bar{z}^{k+1}$.

\item The multiplier updates give $\widetilde{x}^{k+1}-\bar{x}^{k+1}=\sigma B(\widetilde{z}^{k+1}-\bar{z}^{k+1})$. Thus,
\begin{equation*}
\begin{aligned}
\|\widetilde{x}^{k+1}-\bar{x}^{k+1}\|
&=\sigma\|BM_{z}^{-1/2}M_{z}^{1/2}(\widetilde{z}^{k+1}-\bar{z}^{k+1})\|
\leq\sigma\|BM_{z}^{-1/2}\|_{2}\,\|\widetilde{z}^{k+1}-\bar{z}^{k+1}\|_{M_{z}}\\
&\leq\sigma\|\widetilde{z}^{k+1}-\bar{z}^{k+1}\|_{M_{z}}
\leq\sigma\|M_{z}^{-1/2}d_{z}^{k}\|,
\end{aligned}
\end{equation*}
where $\|BM_{z}^{-1/2}\|_{2}\leq1$ follows from $M_{z}\succeq B^{*}B$, and the last inequality uses part (a).

\item Define
\begin{equation*}
l_{y}^{k}:=-A^{*}(Ay^{k}+B\bar{z}^{k+1}-c+\sigma^{-1}\bar{x}^{k+1})+M_{y}y^{k}.
\end{equation*}
The multiplier identity in part (b) gives
\begin{equation*}
\widetilde{l}_{y}^{k}-l_{y}^{k}
=-A^{*}B(\widetilde{z}^{k+1}-\bar{z}^{k+1})-\sigma^{-1}A^{*}(\widetilde{x}^{k+1}-\bar{x}^{k+1})
=-2A^{*}B(\widetilde{z}^{k+1}-\bar{z}^{k+1}).
\end{equation*}
Moreover, from the optimality condition of the $y$-subproblem and \eqref{eq-padmm-y-residual}, we have
\begin{equation*}
\bar{y}^{k+1}=\operatorname{Prox}_{f/\sigma}^{M_{y}}(M_{y}^{-1}l_{y}^{k}),
\qquad
\widetilde{y}^{k+1}=\operatorname{Prox}_{f/\sigma}^{M_{y}}\big(M_{y}^{-1}(\widetilde{l}_{y}^{k}+d_{y}^{k})\big).
\end{equation*}
Applying \eqref{eq-padmm-prox} as in part (a), we obtain
\begin{equation*}
\begin{aligned}
\|\widetilde{y}^{k+1}-\bar{y}^{k+1}\|_{M_{y}}
&\leq\|M_{y}^{-1/2}(\widetilde{l}_{y}^{k}-l_{y}^{k}+d_{y}^{k})\|\\
&\leq\|M_{y}^{-1/2}d_{y}^{k}\|+2\|M_{y}^{-1/2}A^{*}BM_{z}^{-1/2}\|_{2}\,\|\widetilde{z}^{k+1}-\bar{z}^{k+1}\|_{M_{z}}\\
&\leq\|M_{y}^{-1/2}d_{y}^{k}\|+2\|M_{z}^{-1/2}d_{z}^{k}\|,
\end{aligned}
\end{equation*}
where we used part (a) and $\|M_{y}^{-1/2}A^{*}BM_{z}^{-1/2}\|_{2}\leq\|M_{y}^{-1/2}A^{*}\|_{2}\|BM_{z}^{-1/2}\|_{2}\leq1$, which follows from $M_{y}\succeq A^{*}A$ and $M_{z}\succeq B^{*}B$.
\end{enumerate}
Combining (a)--(c) with $\|v\|\leq\|H^{-1/2}\|_{2}\|v\|_{H}$ gives \eqref{eq-padmm-errorbound}. Setting $d_{z}^{k}=0$ gives \eqref{eq-padmm-exact-z}. \hfill
\end{proof}

The following corollary establishes convergence and a nonergodic $\mathcal{O}(1/k)$ KKT residual rate for the inexact iterates $\{\widetilde{w}^{k+1}\}$.

\begin{corollary}[Convergence and KKT residual rate]\label{cor-padmm-kkt}
Let $\{w^k\}$ and $\{\widetilde{w}^{k+1}\}$ be generated by \eqref{eq-padmm-inexact} with $\rho\in(0,2]$ and $\beta_k=1/(k+r)$, $r\geq2$. Choose $d_y^k,d_z^k$ so that the right-hand side of \eqref{eq-padmm-errorbound} is at most $\varepsilon_k^2$. If
\begin{equation*}
\sum_{k=0}^{\infty}(k+r-1)^2\varepsilon_k^2<\infty
\quad\text{for }0<\rho<2,
\qquad\text{or}\qquad
\sum_{k=0}^{\infty}(k+r-1)\varepsilon_k<\infty
\quad\text{for }\rho=2,
\end{equation*}
then $\widetilde{w}^{k+1}$ converges to a KKT point $w^*\in\operatorname{zero}(\mathcal{T})$, and
\begin{equation*}
\operatorname{dist}\big(0,\mathcal{T}(\widetilde{w}^{k+1})\big)=\mathcal{O}(1/k).
\end{equation*}
Moreover, $w^k\to w^*$ if $0<\rho<2$ or $\mathcal{M}\succ0$.
\end{corollary}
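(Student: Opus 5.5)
The plan is to read \eqref{eq-padmm-inexact} as Algorithm \ref{algo-iAccGPPA} with $\mathcal{A}=\mathcal{T}$ and the admissible preconditioner $\mathcal{M}$. The choice of $d_y^k,d_z^k$ together with Proposition \ref{prop-padmm-inexactness} guarantees $\|\widetilde{w}^{k+1}-\mathcal{J}_{\mathcal{T}}^{\mathcal{M}}w^k\|\le\varepsilon_k$. Hence all results of Section \ref{sec-convergence} apply with $\boldsymbol{x}^k=w^k$ and $\boldsymbol{y}^k=\widetilde{w}^{k+1}$.

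\textbf{Convergence.} Both weighted summability conditions imply $\sum_k\varepsilon_k<\infty$. For $0<\rho<2$ this follows from Cauchy--Schwarz, since $\sum_k\varepsilon_k\le(\sum_k(k+r-1)^2\varepsilon_k^2)^{1/2}(\sum_k(k+r-1)^{-2})^{1/2}$; for $\rho=2$ it is immediate. Theorem \ref{thm-preconditioned-convergence} then gives $\widetilde{w}^{k+1}\to w^*\in\operatorname{zero}(\mathcal{T})$, and also $w^k\to w^*$ when $0<\rho<2$. Corollary \ref{rem-positive-definite} covers the case $\mathcal{M}\succ0$.

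\textbf{KKT residual rate.} I will bound the residual at the exact point $\bar{w}^{k+1}$ and then transfer it to $\widetilde{w}^{k+1}$. By definition of the preconditioned resolvent, $\mathcal{M}(w^k-\bar{w}^{k+1})\in\mathcal{T}(\bar{w}^{k+1})$. Writing $\mathcal{M}=\mathcal{C}\mathcal{C}^*$, this gives
\[
\operatorname{dist}(0,\mathcal{T}(\bar{w}^{k+1}))\le\|\mathcal{C}\|_2\,\|w^k-\mathcal{J}_{\mathcal{T}}^{\mathcal{M}}w^k\|_{\mathcal{M}}=\mathcal{O}(1/k)
\]
by Theorem \ref{thm-preconditioned-iAccGPPA}. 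The passage from $\bar{w}^{k+1}$ to $\widetilde{w}^{k+1}$ is the main obstacle, because $\mathcal{T}$ is set-valued and not Lipschitz, so closeness of the points does not transfer the residual directly. I therefore plan to build an explicit element of $\mathcal{T}(\widetilde{w}^{k+1})$ from the residual conditions \eqref{eq-padmm-z-residual}--\eqref{eq-padmm-y-residual}.

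\textbf{Explicit subgradient elements.} Condition \eqref{eq-padmm-z-residual} gives an element of $\partial g(\widetilde{z}^{k+1})$ equal to
\[
\sigma d_z^k-\sigma M_z\widetilde{z}^{k+1}+\sigma l_z^k .
\]
Adding $B^*\widetilde{x}^{k+1}$ and expanding $l_z^k$ and $\widetilde{x}^{k+1}$ reduces the $z$-component of $\mathcal{T}(\widetilde{w}^{k+1})$ to
\[
\sigma d_z^k-\sigma T_z(\widetilde{z}^{k+1}-z^k).
\]
Similarly, \eqref{eq-padmm-y-residual} reduces the $y$-component to
\[
\sigma d_y^k-\sigma M_y(\widetilde{y}^{k+1}-y^k)-A^*(\widetilde{x}^{k+1}-x^k).
\]
The last component is $c-A\widetilde{y}^{k+1}-B\widetilde{z}^{k+1}$. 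Using the multiplier update, it equals $\sigma^{-1}(x^k-\widetilde{x}^{k+1})+A(y^k-\widetilde{y}^{k+1})$.

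So $\operatorname{dist}(0,\mathcal{T}(\widetilde{w}^{k+1}))$ is bounded by a constant times $\|d_y^k\|+\|d_z^k\|+\|\widetilde{w}^{k+1}-w^k\|$. For the last term, $\|\widetilde{w}^{k+1}-w^k\|\le\|w^k-\bar{w}^{k+1}\|+\varepsilon_k$.

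\textbf{Controlling each term.} For $\|w^k-\bar{w}^{k+1}\|$, I use Corollary \ref{cor-euclidean-residual}(ii) when $\rho<2$. This needs Lipschitz continuity of $(\mathcal{M}+\mathcal{T})^{-1}$. That is plausible because the pADMM resolvent is a composition of proximal maps with $M_y,M_z\succ0$, but I would rather avoid relying on it. The alternative is to use only seminorm quantities: $\|T_z^{1/2}\cdot\|$ and the combination appearing above are controlled by $\|w^k-\bar{w}^{k+1}\|_{\mathcal{M}}$ plus $\mathcal{O}(\varepsilon_k)$. The terms $d_y^k$ and $d_z^k$ must be shown to be $\mathcal{O}(\varepsilon_k)$. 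Since the bound \eqref{eq-padmm-errorbound} only controls $M^{-1/2}d$, I may need the residuals to be chosen so that their norms are themselves $\mathcal{O}(\varepsilon_k)$, which is where care is required. Finally, both summability conditions give $\varepsilon_k=o(1/k)$, which completes the $\mathcal{O}(1/k)$ rate.
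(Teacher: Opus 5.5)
Your convergence part matches the paper's. Your three components of an element of $\mathcal{T}(\widetilde{w}^{k+1})$ are correct: they are exactly the components of $v^{k+1}=\mathcal{M}(w^{k}-\widetilde{w}^{k+1})+\sigma(d_y^k,d_z^k,0)$, the element the paper uses. The gap is in the final bounding step. By replacing this element with a constant times $\|d_y^k\|+\|d_z^k\|+\|\widetilde{w}^{k+1}-w^k\|$, you discard the factor $\mathcal{M}$. You are then left needing the Euclidean quantity $\|w^k-\bar{w}^{k+1}\|$ to be $\mathcal{O}(1/k)$, which Theorem \ref{thm-preconditioned-iAccGPPA} does not give. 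Your primary route, Corollary \ref{cor-euclidean-residual}(ii), needs a Lipschitz hypothesis you have not verified and excludes $\rho=2$ entirely. That exclusion is essential: Example \ref{rem-singular-endpoint} shows the Euclidean residual at the iterate can fail to vanish at $\rho=2$ when $\mathcal{M}$ is singular. The pADMM preconditioner is singular whenever $T_y$ or $T_z$ is (e.g.\ classical ADMM with $T_y=T_z=0$). So the argument you commit to does not cover the endpoint case claimed in the statement.

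The seminorm alternative you only sketch is the right one, and it takes one line once you recognize $v^{k+1}$. Split $w^k-\widetilde{w}^{k+1}=(w^k-\bar{w}^{k+1})+(\bar{w}^{k+1}-\widetilde{w}^{k+1})$. To the first piece, apply the inequality $\|\mathcal{M}h\|\le\|\mathcal{C}\|_2\|h\|_{\mathcal{M}}=\sqrt{\|\mathcal{M}\|_2}\,\|h\|_{\mathcal{M}}$, which you already used at $\bar{w}^{k+1}$; bound the second by $\|\mathcal{M}\|_2\varepsilon_k$. This gives $\|v^{k+1}\|\le\sqrt{\|\mathcal{M}\|_2}\,\|w^k-\mathcal{J}_{\mathcal{T}}^{\mathcal{M}}w^k\|_{\mathcal{M}}+\|\mathcal{M}\|_2\varepsilon_k+\sigma\|(d_y^k,d_z^k)\|$ for every $\rho\in(0,2]$. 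It needs no Lipschitz assumption and no detour through $\operatorname{dist}(0,\mathcal{T}(\bar{w}^{k+1}))$. Your worry about the residuals also resolves immediately. The right-hand side of \eqref{eq-padmm-errorbound} dominates both $(\sigma^2+\|M_z^{-1/2}\|_2^2)\|M_z^{-1/2}d_z^k\|^2$ and $\|M_y^{-1/2}\|_2^2\|M_y^{-1/2}d_y^k\|^2$. Since $M_y,M_z\succ0$ are fixed, $\|d\|\le\|H^{1/2}\|_2\|H^{-1/2}d\|$ for $H\in\{M_y,M_z\}$ gives $\|(d_y^k,d_z^k)\|\le K_d\varepsilon_k$ automatically, with no extra condition on how the residuals are chosen. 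Theorem \ref{thm-preconditioned-iAccGPPA}(i)--(ii) and $\varepsilon_k=o(1/k)$ then finish the proof, which is exactly the paper's argument.
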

\begin{proof}
By Proposition \ref{prop-padmm-inexactness}, the inexactness condition \eqref{eq-padmm-criterion} holds. Each stated weighted summability condition implies $\sum_{k=0}^{\infty}\varepsilon_k<\infty$ and $\varepsilon_k=o(1/k)$. Theorem \ref{thm-preconditioned-convergence} gives $\widetilde{w}^{k+1}\to w^*$ and, for $0<\rho<2$, $w^k\to w^*$. When $\mathcal{M}\succ0$, Corollary \ref{rem-positive-definite} also gives $w^k\to w^*$ for $\rho=2$.

By the subproblem residual inclusions \eqref{eq-padmm-z-residual}--\eqref{eq-padmm-y-residual} and the multiplier update, we have
\begin{equation*}
v^{k+1}:=\mathcal{M}(w^{k}-\widetilde{w}^{k+1})+\sigma(d_{y}^{k},d_{z}^{k},0)
\in\mathcal{T}(\widetilde{w}^{k+1}).
\end{equation*}
Since $M_y,M_z\succ0$ and $\sigma>0$, the assumption that the right-hand side of \eqref{eq-padmm-errorbound} does not exceed $\varepsilon_k^2$ also implies $\|(d_y^k,d_z^k)\|\leq K_d\varepsilon_k$ for a constant $K_d$ independent of $k$. Splitting $w^{k}-\widetilde{w}^{k+1}=(w^{k}-\bar{w}^{k+1})+(\bar{w}^{k+1}-\widetilde{w}^{k+1})$ and using $\|\mathcal{M}h\|\leq\sqrt{\|\mathcal{M}\|_{2}}\|h\|_{\mathcal{M}}$, we obtain
\begin{equation*}
\begin{aligned}
\|v^{k+1}\|
&\leq\|\mathcal{M}(w^{k}-\bar{w}^{k+1})\|+\|\mathcal{M}\|_{2}\|\bar{w}^{k+1}-\widetilde{w}^{k+1}\|+\sigma\|(d_{y}^{k},d_{z}^{k})\|\\
&\leq\sqrt{\|\mathcal{M}\|_{2}}\,\|w^{k}-\mathcal{J}_{\mathcal{T}}^{\mathcal{M}}w^{k}\|_{\mathcal{M}}+(\|\mathcal{M}\|_{2}+\sigma K_d)\varepsilon_{k}.
\end{aligned}
\end{equation*}
Theorem \ref{thm-preconditioned-iAccGPPA}(i)--(ii) and $\varepsilon_k=o(1/k)$ give $\|v^{k+1}\|=\mathcal{O}(1/k)$. The KKT residual rate follows from $v^{k+1}\in\mathcal{T}(\widetilde{w}^{k+1})$.
\hfill
\end{proof}

\subsection{Inexact accelerated preconditioned PDHG}\label{sec-App-PDHG}

Consider the following convex-concave saddle-point problem:
\begin{equation}\label{eq-pdhg-P}
\min_y\max_x\; f(y)+\langle Ky,x\rangle-g^*(x).
\end{equation}
Here $f$ and $g$ are closed proper convex functions defined on finite-dimensional Euclidean spaces, $g^*$ is the Fenchel conjugate of $g$, and $K$ is a linear operator with adjoint $K^*$. Let $w=(y,x)$, where $y$ and $x$ are the primal and dual variables, respectively. The KKT conditions are
\begin{equation}\label{eq-pdhg-KKT}
0\in\mathcal{T}(w):=
\begin{pmatrix}
\partial f(y)+K^*x\\
\partial g^*(x)-Ky
\end{pmatrix}.
\end{equation}
We assume that \eqref{eq-pdhg-KKT} has a solution. The operator $\mathcal{T}$ is maximally monotone.
Let $T_y$ and $T_x$ be fixed self-adjoint positive definite operators such that
\begin{equation*}
\mathcal{M}:=
\begin{pmatrix}
T_y&-K^*\\
-K&T_x
\end{pmatrix}\succeq0.
\end{equation*}
We consider the following relaxed version of the preconditioned PDHG method in \cite{liu2021preconditioning} for solving \eqref{eq-pdhg-P}:
\begin{equation}\label{eq-pdhg-template}
\begin{aligned}
\bar{y}^{k+1}&=\mathop{\rm argmin}_y
\left\{f(y)+\langle Ky,x^k\rangle+\frac12\|y-y^k\|_{T_y}^2\right\},\\
\bar{x}^{k+1}&=\mathop{\rm argmin}_x
\left\{g^*(x)-\langle K(2\bar{y}^{k+1}-y^k),x\rangle+\frac12\|x-x^k\|_{T_x}^2\right\},\\
w^{k+1}&=(1-\rho)w^k+\rho\bar{w}^{k+1},\qquad\rho\in(0,2],
\end{aligned}
\end{equation}
where $\bar{w}^{k+1}=(\bar{y}^{k+1},\bar{x}^{k+1})$.
The optimality conditions for the first two steps in \eqref{eq-pdhg-template} give
$0\in\mathcal{T}(\bar{w}^{k+1})+\mathcal{M}(\bar{w}^{k+1}-w^k)$. Thus $\bar{w}^{k+1}=\mathcal{J}_{\mathcal{T}}^{\mathcal{M}}w^k$, i.e., \eqref{eq-pdhg-template} is a preconditioned generalized PPA. Since $T_y,T_x\succ0$, the two subproblems have unique solutions for every $w^k$, so $\mathcal{M}$ is an admissible preconditioner for $\mathcal{T}$.
The condition $\mathcal{M}\succeq0$ is equivalent to $\|T_x^{-1/2}KT_y^{-1/2}\|_2\leq1$. For $T_y=\sigma_y\mathcal{I}$ and $T_x=\sigma_x\mathcal{I}$ with $\sigma_y,\sigma_x>0$, \eqref{eq-pdhg-template} with $\rho=1$ reduces to the PDHG method~\cite{chambolle2011first}.

Fix $\rho\in(0,2]$ and $r\geq2$, set $\beta_{k}=1/(k+r)$, and choose an inexactness tolerance sequence $\{\varepsilon_k\}_{k=0}^{\infty}\subseteq[0,\infty)$. Starting from $w^0=(y^0,x^0)$, we consider the following inexact Halpern-accelerated iteration:
\begin{subequations}\label{eq-pdhg-inexact}
\begin{align}
\text{Find}\ &\widetilde{w}^{k+1}\ \text{such that}\quad
\|\widetilde{w}^{k+1}-\mathcal{J}_{\mathcal{T}}^{\mathcal{M}}w^k\|
=\|\widetilde{w}^{k+1}-\bar{w}^{k+1}\|\leq\varepsilon_k,
\label{eq-pdhg-criterion}\\
w^{k+1}&=\beta_k w^0+(1-\beta_k)
\big[(1-\rho)w^k+\rho\widetilde{w}^{k+1}\big],\qquad k\geq0,
\label{eq-pdhg-Halpern}
\end{align}
\end{subequations}
where $\widetilde{w}^{k+1}=(\widetilde{y}^{k+1},\widetilde{x}^{k+1})$. This is Algorithm \ref{algo-iAccGPPA} with $\mathcal{A}=\mathcal{T}$.
We next show how to check the inexactness condition \eqref{eq-pdhg-criterion} by using the subproblem residuals. Suppose that the computed points $\widetilde{y}^{k+1},\widetilde{x}^{k+1}$ and residual vectors $d_y^k,d_x^k$ satisfy
\begin{equation}\label{eq-pdhg-subproblem-residuals}
\begin{aligned}
d_y^k&\in\partial f(\widetilde{y}^{k+1})
+T_y(\widetilde{y}^{k+1}-y^k)+K^*x^k,\\
d_x^k&\in\partial g^*(\widetilde{x}^{k+1})
+T_x(\widetilde{x}^{k+1}-x^k)-K(2\widetilde{y}^{k+1}-y^k).
\end{aligned}
\end{equation}

\begin{proposition}\label{prop-pdhg-inexactness}\textup{(\textbf{Inexactness criterion})}
Suppose that $\widetilde{y}^{k+1}$ and $\widetilde{x}^{k+1}$ satisfy \eqref{eq-pdhg-subproblem-residuals}. Then
\begin{equation}\label{eq-pdhg-errorbound}
\|\widetilde{w}^{k+1}-\bar{w}^{k+1}\|^2
\leq\|T_y^{-1/2}\|_2^2\|T_y^{-1/2}d_y^k\|^2
+\|T_x^{-1/2}\|_2^2
\big(\|T_x^{-1/2}d_x^k\|+2\|T_y^{-1/2}d_y^k\|\big)^2.
\end{equation}
In particular, if $d_y^k=0$, then
\begin{equation}\label{eq-pdhg-exact-x}
\|\widetilde{w}^{k+1}-\bar{w}^{k+1}\|^2
\leq\|T_x^{-1/2}\|_2^2\,
\|T_x^{-1/2}d_x^k\|^2.
\end{equation}
\end{proposition}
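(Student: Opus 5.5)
We follow the template of the proof of Proposition \ref{prop-padmm-inexactness}: write both the exact and inexact subproblem solutions as preconditioned proximal maps, use the firm nonexpansiveness \eqref{eq-padmm-prox} to bound each difference in the appropriate norm, and propagate the $y$-error into the $x$-subproblem.

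\emph{The $y$-step.} The optimality condition of the first step in \eqref{eq-pdhg-template} gives $\bar{y}^{k+1}=\operatorname{Prox}_{f}^{T_y}\big(y^k-T_y^{-1}K^*x^k\big)$. The first inclusion in \eqref{eq-pdhg-subproblem-residuals} gives $\widetilde{y}^{k+1}=\operatorname{Prox}_{f}^{T_y}\big(y^k-T_y^{-1}K^*x^k+T_y^{-1}d_y^k\big)$. The two arguments differ by $T_y^{-1}d_y^k$. Applying \eqref{eq-padmm-prox} with $H=T_y$ and Cauchy--Schwarz, exactly as in part (a) of the pADMM proof, yields
\[
\|\widetilde{y}^{k+1}-\bar{y}^{k+1}\|_{T_y}\le\|T_y^{-1/2}d_y^k\|.
\]

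\emph{The $x$-step.} Similarly, $\bar{x}^{k+1}=\operatorname{Prox}_{g^*}^{T_x}\big(x^k+T_x^{-1}K(2\bar{y}^{k+1}-y^k)\big)$. For the inexact point we have $\widetilde{x}^{k+1}=\operatorname{Prox}_{g^*}^{T_x}\big(x^k+T_x^{-1}K(2\widetilde{y}^{k+1}-y^k)+T_x^{-1}d_x^k\big)$. The arguments differ by $T_x^{-1}\big(d_x^k+2K(\widetilde{y}^{k+1}-\bar{y}^{k+1})\big)$, so the same argument gives
\[
\|\widetilde{x}^{k+1}-\bar{x}^{k+1}\|_{T_x}\le\|T_x^{-1/2}d_x^k\|+2\|T_x^{-1/2}KT_y^{-1/2}\|_2\,\|\widetilde{y}^{k+1}-\bar{y}^{k+1}\|_{T_y}.
\]
The key fact is the one noted after \eqref{eq-pdhg-template}: $\mathcal{M}\succeq0$ is equivalent to $\|T_x^{-1/2}KT_y^{-1/2}\|_2\le1$. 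Inserting this and the $y$-bound gives
\[
\|\widetilde{x}^{k+1}-\bar{x}^{k+1}\|_{T_x}\le\|T_x^{-1/2}d_x^k\|+2\|T_y^{-1/2}d_y^k\|.
\]

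\emph{Combining.} Convert to Euclidean norms via $\|v\|\le\|H^{-1/2}\|_2\|v\|_H$ with $H=T_y$ and $H=T_x$. Then add the squares of the two component errors, which gives \eqref{eq-pdhg-errorbound}. Setting $d_y^k=0$ makes the $y$-error vanish and yields \eqref{eq-pdhg-exact-x}.

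The only step needing care is the coupling term $2K(\widetilde{y}^{k+1}-\bar{y}^{k+1})$. It must be measured in the $T_x^{-1}$-weighted norm and then controlled by $\|T_x^{-1/2}KT_y^{-1/2}\|_2\le1$. The remaining steps are routine repetitions of the pADMM argument.
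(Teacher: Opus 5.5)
Your proposal is correct and follows the paper's proof essentially step for step. Both write the exact and inexact iterates as preconditioned proximal points whose arguments differ by $T_y^{-1}d_y^k$ and $T_x^{-1}[d_x^k+2K(\widetilde{y}^{k+1}-\bar{y}^{k+1})]$, apply the firm nonexpansiveness \eqref{eq-padmm-prox}, control the coupling term via $\|T_x^{-1/2}KT_y^{-1/2}\|_2\leq1$, and convert to Euclidean norms using $\|v\|\leq\|H^{-1/2}\|_2\|v\|_H$.
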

\begin{proof}
The optimality conditions in \eqref{eq-pdhg-template} give
\begin{equation*}
\bar{y}^{k+1}=\operatorname{Prox}_{f}^{T_{y}}\big(y^{k}-T_{y}^{-1}K^{*}x^{k}\big),
\qquad
\bar{x}^{k+1}=\operatorname{Prox}_{g^{*}}^{T_{x}}\big(x^{k}+T_{x}^{-1}K(2\bar{y}^{k+1}-y^{k})\big).
\end{equation*}
The residual inclusions \eqref{eq-pdhg-subproblem-residuals} show that $\widetilde{y}^{k+1}$ and $\widetilde{x}^{k+1}$ are obtained by adding $T_{y}^{-1}d_{y}^{k}$ and $T_{x}^{-1}[d_{x}^{k}+2K(\widetilde{y}^{k+1}-\bar{y}^{k+1})]$ to the respective proximal arguments above. Applying \eqref{eq-padmm-prox} as in part (a) of the proof of Proposition \ref{prop-padmm-inexactness}, we obtain
\begin{equation*}
\begin{aligned}
\|\widetilde{y}^{k+1}-\bar{y}^{k+1}\|_{T_{y}}
&\leq\|T_{y}^{-1/2}d_{y}^{k}\|,\\
\|\widetilde{x}^{k+1}-\bar{x}^{k+1}\|_{T_{x}}
&\leq\|T_{x}^{-1/2}[d_{x}^{k}+2K(\widetilde{y}^{k+1}-\bar{y}^{k+1})]\|\\
&\leq\|T_{x}^{-1/2}d_{x}^{k}\|+2\|T_{x}^{-1/2}KT_{y}^{-1/2}\|_{2}\,\|\widetilde{y}^{k+1}-\bar{y}^{k+1}\|_{T_{y}}\\
&\leq\|T_{x}^{-1/2}d_{x}^{k}\|+2\|T_{y}^{-1/2}d_{y}^{k}\|,
\end{aligned}
\end{equation*}
where the last inequality uses $\|T_{x}^{-1/2}KT_{y}^{-1/2}\|_{2}\leq1$ and the first bound above. Combining these estimates with $\|v\|\leq\|H^{-1/2}\|_{2}\|v\|_{H}$ for $H=T_{y},T_{x}$ gives \eqref{eq-pdhg-errorbound}. Setting $d_{y}^{k}=0$ gives \eqref{eq-pdhg-exact-x}.
\end{proof}

\begin{corollary}[Convergence and KKT residual rate]\label{cor-pdhg-convergence}
Let $\{w^k\}$ and $\{\widetilde{w}^{k+1}\}$ be generated by \eqref{eq-pdhg-inexact} with $\rho\in(0,2]$ and $\beta_k=1/(k+r)$, $r\geq2$. Choose $d_y^k,d_x^k$ so that the right-hand side of \eqref{eq-pdhg-errorbound} is at most $\varepsilon_k^2$. If
\begin{equation*}
\sum_{k=0}^{\infty}(k+r-1)^2\varepsilon_k^2<\infty
\quad\text{for }0<\rho<2,
\qquad\text{or}\qquad
\sum_{k=0}^{\infty}(k+r-1)\varepsilon_k<\infty
\quad\text{for }\rho=2,
\end{equation*}
then $\widetilde{w}^{k+1}$ converges to a saddle point $w^*$ of \eqref{eq-pdhg-P}, and
\begin{equation*}
\operatorname{dist}\big(0,\mathcal{T}(\widetilde{w}^{k+1})\big)
=\mathcal{O}(1/k).
\end{equation*}
Moreover, $w^k\to w^*$ if $0<\rho<2$ or $\mathcal{M}\succ0$.
\end{corollary}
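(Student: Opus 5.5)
The plan mirrors the proof of Corollary \ref{cor-padmm-kkt}, with Proposition \ref{prop-pdhg-inexactness} playing the role of Proposition \ref{prop-padmm-inexactness}. First, the choice of $d_y^k,d_x^k$ makes the right-hand side of \eqref{eq-pdhg-errorbound} at most $\varepsilon_k^2$. Hence \eqref{eq-pdhg-criterion} holds, and \eqref{eq-pdhg-inexact} is an instance of Algorithm \ref{algo-iAccGPPA} with $\mathcal{A}=\mathcal{T}$ and the admissible preconditioner $\mathcal{M}$. Each weighted summability condition implies $\sum_k\varepsilon_k<\infty$ and $\varepsilon_k=o(1/k)$. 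For $0<\rho<2$ this follows from Cauchy--Schwarz against $\sum_k (k+r-1)^{-2}<\infty$, together with $(k+r-1)\varepsilon_k\to0$. Theorem \ref{thm-preconditioned-convergence} then gives $\widetilde{w}^{k+1}\to w^*\in\operatorname{zero}(\mathcal{T})$, which is a saddle point of \eqref{eq-pdhg-P}. It also gives $w^k\to w^*$ when $0<\rho<2$, and Corollary \ref{rem-positive-definite} covers $\rho=2$ with $\mathcal{M}\succ0$.

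For the rate, I would build an explicit element of $\mathcal{T}(\widetilde{w}^{k+1})$ from \eqref{eq-pdhg-subproblem-residuals}. The first inclusion gives $d_y^k-T_y(\widetilde y^{k+1}-y^k)+K^*(\widetilde x^{k+1}-x^k)\in\partial f(\widetilde y^{k+1})+K^*\widetilde x^{k+1}$. The second gives $d_x^k-T_x(\widetilde x^{k+1}-x^k)+K(\widetilde y^{k+1}-y^k)\in\partial g^*(\widetilde x^{k+1})-K\widetilde y^{k+1}$. Together these yield
\[
v^{k+1}:=\mathcal{M}(w^k-\widetilde w^{k+1})+(d_y^k,d_x^k)\in\mathcal{T}(\widetilde w^{k+1}).
\]
The identity is a short sign check against the block form of $\mathcal{M}$, and it is the one step requiring care.

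Next I bound $\|(d_y^k,d_x^k)\|\le K_d\varepsilon_k$. The first term of \eqref{eq-pdhg-errorbound} controls $\|T_y^{-1/2}d_y^k\|\le \varepsilon_k/\|T_y^{-1/2}\|_2$. The second term controls $\|T_x^{-1/2}d_x^k\|$ up to a multiple of $\varepsilon_k$, since $\|T_x^{-1/2}d_x^k\|\le(\|T_x^{-1/2}d_x^k\|+2\|T_y^{-1/2}d_y^k\|)$. Then $\|d\|\le\|T^{1/2}\|_2\|T^{-1/2}d\|$ finishes this step, with $T_y,T_x\succ0$ fixed.

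Finally, split $w^k-\widetilde w^{k+1}=(w^k-\bar w^{k+1})+(\bar w^{k+1}-\widetilde w^{k+1})$ and use $\|\mathcal{M}h\|\le\sqrt{\|\mathcal{M}\|_2}\|h\|_{\mathcal{M}}$. This gives
\[
\|v^{k+1}\|\le\sqrt{\|\mathcal{M}\|_2}\,\|w^k-\mathcal{J}_{\mathcal{T}}^{\mathcal{M}}w^k\|_{\mathcal{M}}+(\|\mathcal{M}\|_2+K_d)\varepsilon_k.
\]
Theorem \ref{thm-preconditioned-iAccGPPA}(i)--(ii) makes the seminorm residual $\mathcal{O}(1/k)$, and $\varepsilon_k=o(1/k)$ handles the second term. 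Therefore $\operatorname{dist}(0,\mathcal{T}(\widetilde w^{k+1}))\le\|v^{k+1}\|=\mathcal{O}(1/k)$.
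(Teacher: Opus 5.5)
Your proposal is correct and follows essentially the same route as the paper: you verify the criterion via Proposition \ref{prop-pdhg-inexactness}, invoke Theorem \ref{thm-preconditioned-convergence} and Corollary \ref{rem-positive-definite} for convergence, exhibit $v^{k+1}=\mathcal{M}(w^k-\widetilde w^{k+1})+(d_y^k,d_x^k)\in\mathcal{T}(\widetilde w^{k+1})$, and bound it by the $\mathcal{M}$-seminorm residual from Theorem \ref{thm-preconditioned-iAccGPPA} plus $\mathcal{O}(\varepsilon_k)$ terms. Your explicit sign check and your derivation of $\|(d_y^k,d_x^k)\|\le K_d\varepsilon_k$ fill in details that the paper states without proof.
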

\begin{proof}
By Proposition \ref{prop-pdhg-inexactness}, the inexactness condition \eqref{eq-pdhg-criterion} holds. Each stated weighted summability condition implies $\sum_{k=0}^{\infty}\varepsilon_k<\infty$ and $\varepsilon_k=o(1/k)$. Theorem \ref{thm-preconditioned-convergence} gives $\widetilde{w}^{k+1}\to w^*$ and, for $0<\rho<2$, $w^k\to w^*$. When $\mathcal{M}\succ0$, Corollary \ref{rem-positive-definite} also gives $w^k\to w^*$ for $\rho=2$.

By the subproblem residual inclusions \eqref{eq-pdhg-subproblem-residuals}, we have
\begin{equation*}
v^{k+1}:=\mathcal{M}(w^{k}-\widetilde{w}^{k+1})+(d_{y}^{k},d_{x}^{k})
\in\mathcal{T}(\widetilde{w}^{k+1}).
\end{equation*}
Since $T_{y}$ and $T_{x}$ are fixed positive definite operators, the assumption that the right-hand side of \eqref{eq-pdhg-errorbound} is at most $\varepsilon_{k}^{2}$ implies $\|(d_{y}^{k},d_{x}^{k})\|=\mathcal{O}(\varepsilon_{k})$. Using the definition of $v^{k+1}$ and $\|\mathcal{M}h\|\leq\sqrt{\|\mathcal{M}\|_{2}}\|h\|_{\mathcal{M}}$, we obtain
\begin{equation*}
\begin{aligned}
\|v^{k+1}\|
&\leq\|\mathcal{M}(w^{k}-\bar{w}^{k+1})\|+\|\mathcal{M}\|_{2}\|\bar{w}^{k+1}-\widetilde{w}^{k+1}\|+\|(d_{y}^{k},d_{x}^{k})\|\\
&\leq\sqrt{\|\mathcal{M}\|_{2}}\,\|w^{k}-\mathcal{J}_{\mathcal{T}}^{\mathcal{M}}w^{k}\|_{\mathcal{M}}+\|\mathcal{M}\|_{2}\varepsilon_{k}+\|(d_{y}^{k},d_{x}^{k})\|.
\end{aligned}
\end{equation*}
Theorem \ref{thm-preconditioned-iAccGPPA}(i)--(ii) and $\varepsilon_k=o(1/k)$ give $\|v^{k+1}\|=\mathcal{O}(1/k)$. The KKT residual rate follows from $v^{k+1}\in\mathcal{T}(\widetilde{w}^{k+1})$.
\end{proof}

\section{Conclusions}\label{sec-conclusions}

This paper studies the \iHPGPPA for maximal monotone inclusion problems. We establish convergence of the inexact resolvent sequence and an $\mathcal{O}(1/k^2)$ convergence rate for the squared fixed-point residual in the $\mathcal{M}$-seminorm. Our analysis gives different sufficient inexactness conditions for $0<\rho<2$ and $\rho=2$ to preserve this accelerated residual rate. At $\rho=2$, the weighted summability condition is also necessary for guaranteeing this rate uniformly over all admissible error directions. For $0<\rho<2$, we also establish convergence of $\{\boldsymbol{x}^k\}$, even when $\mathcal{M}$ is singular. If $(\mathcal{M}+\mathcal{A})^{-1}$ is Lipschitz, the squared Euclidean fixed-point residual admits the same rate at $\boldsymbol{y}^{k}$ for all $\rho\in(0,2]$, and at $\boldsymbol{x}^{k}$ for $0<\rho<2$ under the corresponding weighted summability conditions. For accelerated pADMM and preconditioned PDHG, we derive inexactness criteria based on subproblem residuals and obtain a nonergodic $\mathcal{O}(1/k)$ KKT residual rate for the inexact iterates. We leave the study of adaptive restart strategies for future research.

\medskip
\noindent\textbf{Acknowledgements}\quad The research of Lei Yang is supported in part by the National Natural Science Foundation of China under grant 12301411. The research of Kim-Chuan Toh is supported by the Ministry of Education, Singapore, under its Academic Research Fund Tier 2 grant (MOE-T2EP20224-0017).

\bibliographystyle{plain}
\bibliography{Ref_iHP_GPPA}

\end{document}